\documentclass[onefignum,onetabnum]{siamart190516}



\usepackage{lipsum}
\usepackage{amsfonts}
\usepackage{graphicx}
\usepackage{subfigure}
\usepackage{epstopdf}
\usepackage{algorithmic}
\ifpdf
  \DeclareGraphicsExtensions{.eps,.pdf,.png,.jpg}
\else
  \DeclareGraphicsExtensions{.eps}
\fi


\newcommand{\N}{\mathbb{N}}
\newcommand{\E}{\mathbb{E}}
\newcommand{\R}{\mathbb{R}}
\newcommand{\T}{\mathbb{T}}
\renewcommand{\L}{\mathcal{L}}
\newcommand{\C}{\mathcal{C}}

\newcommand{\scal}[2]{#1 \cdot #2}
\newcommand{\abs}[1]{\big| #1 \big|}
\newcommand{\norm}[1]{\big\| #1 \big\|}

\newsiamremark{remark}{Remark}
\newsiamremark{hypothesis}{Hypothesis}
\crefname{hypothesis}{Hypothesis}{Hypotheses}
\newsiamthm{claim}{Claim}

\newsiamthm{theo}{Theorem}
\newsiamthm{propo}{Proposition}
\newsiamthm{hyp}{Assumption}
\newsiamremark{rem}{Remark}
\newsiamremark{remarks}{Remarks}
\newsiamremark{example}{Example}

\headers{On Asymptotic Preserving schemes for a class of SDEs}{C.-E.~Br\'ehier, S.~Rakotonirina-Ricquebourg}

\title{On Asymptotic Preserving schemes for a class of Stochastic Differential Equations in averaging and diffusion approximation regimes
}

\author{Charles-Edouard Br\'ehier\thanks{Univ Lyon, Universit\'e Claude Bernard Lyon 1, CNRS UMR 5208, Institut Camille Jordan, 43 blvd. du 11 novembre 1918, F-69622 Villeurbanne cedex, France
  (\email{brehier@math.univ-lyon1.fr},\email{rakoto@math.univ-lyon1.fr}).}
\and Shmuel Rakotonirina-Ricquebourg\footnotemark[1]
}

\usepackage{amssymb}
\usepackage{amscd}

\usepackage{amsopn}


\ifpdf
\hypersetup{
  pdftitle={On Asymptotic Preserving schemes for a class of Stochastic Differential Equations in averaging and diffusion approximation regimes},
  pdfauthor={C.-E.~Br\'ehier, S.~Rakotonirina-Ricquebourg}
}
\fi




\begin{document}

\maketitle

\begin{abstract}
We introduce and study a notion of Asymptotic Preserving schemes, related to convergence in distribution, for a class of slow-fast Stochastic Differential Equations. In some examples, crude schemes fail to capture the correct limiting equation resulting from averaging and diffusion approximation procedures. We propose examples of Asymptotic Preserving schemes: when the time-scale separation vanishes, one obtains a limiting scheme, which is shown to be consistent in distribution with the limiting Stochastic Differential Equation. Numerical experiments illustrate the importance of the proposed Asymptotic Preserving schemes for several examples. In addition, in the averaging regime, error estimates are obtained and the proposed scheme is proved to be uniformly accurate.
\end{abstract}

\begin{keywords}
  Asymptotic preserving schemes; multiscale methods; slow-fast Stochastic Differential Equations; averaging principle; diffusion approximation; weak approximation
\end{keywords}

\begin{AMS}
65C30;60H35
\end{AMS}

\section{Introduction}\label{sec:intro}

Deterministic and stochastic systems are ubiquitous in science and engineering. Traditional modelling and numerical methods become ineffective when systems evolve at different time scales: see for instance the monographs~\cite{E:11,Kuehn:15} for comprehensive treatment of multiscale dynamics. Averaging and homogenization~\cite{PavliotisStuart} are two popular techniques which are employed to rigorously derive macroscopic limiting equations, starting from (stochastic) slow-fast systems with separated time-scales.

In the last two decades, constructing efficient numerical methods for multiscale stochastic systems has been a very active research area: let us mention the Heterogeneous Multiscale Method (see~\cite{AbdulleEEngquistVandenEinjden,Brehier:13,ELiuVandenEijnden:05}), projective integration (see~\cite{GivonKevrekidisKupferman:06}), equation-free coarse-graining (see~\cite{KevrekidisAl:03}), spectral methods (see~\cite{AbdullePavliotisVaes:17}), micro-macro acceleration methods (see~\cite{VandecasteeleZielinskiSamaey:20}), parareal algorithms (see~\cite{LegollLelievreMyerscoughSamaey:20}). In the methods mentioned above, the objective is to approximate the limiting model for the slow variables of interest, and only partial but relevant information coming from the fast dynamics is taken into account. As a consequence, these methods may not be appropriate if one wants to approximate simultaneously the original multiscale model and its limit. In this article, we focus on the notion of asymptotic preserving schemes, in order to overcome this issue.

To motivate and illustrate our work, let us introduce simplified versions of the systems of Stochastic Differential Equations (SDE) considered in this article. The time-scale separation parameter is denoted by $\epsilon\in(0,1]$. On the one hand, in the averaging regime (see Equation~\eqref{eq:SDE-av} in Section~\ref{sec:models-av} for the more general version), we consider systems of the type
\begin{equation}\label{eq:intro-SDE-av}
\left\lbrace
\begin{aligned}
    dX^\epsilon_t &= b(X^\epsilon_t,m^\epsilon_t) dt,\\
    dm^\epsilon_t &= - \frac{m_t^\epsilon}{\epsilon} dt + \frac{\sqrt{2}}{\sqrt{\epsilon}} d\beta_t.
\end{aligned}
\right.
\end{equation}
When $\epsilon\to 0$, the averaging principle (see~\cite[Chapter~$10$]{PavliotisStuart}) states that $X^\epsilon$ converges (at least in distribution) to the solution $X$ of the Ordinary Differential Equation $\dot{X}=\overline{b}(X)$ where $\overline{b}(x)=\int b(x,m)d\nu(m)$ and $\nu=\mathcal{N}(0,1)$ is the standard Gaussian random variable. On the other hand, in the diffusion approximation regime (see Equation~\eqref{eq:SDE-diff} in Section~\ref{sec:models-diff} for the more general version), we consider systems of the type
\begin{equation}\label{eq:intro-SDE-diff}
\left\lbrace
\begin{aligned}
dX_t^\epsilon &= \frac{\sigma(X_t^\epsilon)m_t^\epsilon}{\epsilon} dt,\\
dm^\epsilon_t &= - \frac{m_t^\epsilon}{\epsilon^2} dt + \frac{1}{\epsilon} d\beta_t.
\end{aligned}
\right.
\end{equation}
When $\epsilon\to 0$, the diffusion approximation result (see~\cite[Chapter~$11$]{PavliotisStuart}) states that $X^\epsilon$ converges (in distribution) to the solution $X$ of the SDE
\[
dX_t=\sigma(X_t)\circ d\beta_t,
\]
where the noise is interpreted in the Stratonovich sense. This type of results is related to results known as Wong-Zakai approximation and Smoluchowski-Kramers limits in the literature. In the two SDE systems~\eqref{eq:intro-SDE-av} and~\eqref{eq:intro-SDE-diff}, the fast component is an Ornstein-Uhlenbeck process.

In this article, we are interested in the behavior when $\epsilon\to 0$ of numerical schemes for the SDEs~\eqref{eq:intro-SDE-av} and~\eqref{eq:intro-SDE-diff}. To explain the challenge faced and the solutions proposed in this article, we consider the following schemes, which are both consistent for any fixed value of $\epsilon>0$. On the one hand, in the averaging regime one defines
\begin{equation}\label{eq:intro-nonAP-av}
    \left\lbrace
    \begin{aligned}
        X^\epsilon_{n+1} &= X^\epsilon_n + \Delta t b(X^\epsilon_n,m^\epsilon_{n+1}),\\
        m^\epsilon_{n+1} &= m_n^\epsilon-\frac{\Delta t}{\epsilon}m^\epsilon_{n+1}+\sqrt{\frac{2\Delta t}{\epsilon}}\gamma_n.
    \end{aligned}
    \right.
\end{equation}
On the other hand, in the diffusion approximation regime one defines
\begin{equation}\label{eq:intro-nonAP-diff}
\left\lbrace
\begin{aligned}
X_{n+1}^\epsilon&=X_n^\epsilon+\sigma(X_n^\epsilon)\frac{\Delta tm_{n+1}^\epsilon}{\epsilon},\\
m_{n+1}^\epsilon&=m_n^\epsilon-\frac{\Delta t}{\epsilon^2}m_{n+1}^\epsilon+\frac{\sqrt{\Delta t}}{\epsilon}\gamma_n,\end{aligned}
\right.
\end{equation}
In the schemes~\eqref{eq:intro-nonAP-av} and~\eqref{eq:intro-nonAP-diff}, $\bigl(\gamma_n\bigr)_{n\ge 0}$ is a sequence of independent standard Gaussian random variables. One may check that $X_n^\epsilon\to X_n$ for all $n\ge 0$, in probability, when $\epsilon\to 0$, where the limiting schemes are given by
\begin{equation*}
X_{n+1}=X_n+\Delta tb(X_n,0)
\end{equation*}
in the averaging regime, and
\begin{equation*}
X_{n+1}=X_n+\sqrt{\Delta t}\sigma(X_n)\gamma_n,
\end{equation*}
in the diffusion approximation regime. Note that, in the second case, the limiting scheme is consistent with the It\^o interpretation of the noise, instead of the correct Stratonovich one. In the two cases, the limiting scheme is in general not consistent with the limiting equation, and using such a scheme in practice may lead to drawing false conclusions about the limiting system from numerical experiments. We refer to~\cite{FrankGottwald:18,LiAbdulleE:08} for other examples of situations where numerical schemes perform badly when applied to multiscale SDE systems.

The objective of this article is to design and study Asymptotic Preserving (AP) schemes, such that the following diagram commutes (where convergence is understood in distribution): if $T=N\Delta t$, one has
\[
\begin{CD}
X_N^\epsilon     @>{\Delta t\to  0}>> X^\epsilon(T) \\
@VV{\epsilon\to 0}V        @VV{\epsilon\to 0}V\\
\overline{X}_N     @>{\Delta t\to 0}>> \overline{X}(T)
\end{CD}
\]
The two schemes~\eqref{eq:intro-nonAP-av} and~\eqref{eq:intro-nonAP-diff} described above are not AP. The notion of AP schemes has been introduced in~\cite{Jin:99}, for applications to multiscale kinetic Partial Differential Equations (PDEs), which converge to parabolic diffusion PDEs. We refer to\cite[Section~7]{DimarcoPareschi:14},~\cite{HuJinLi:17},~\cite{Jin:12} and~\cite[Section~4]{Puppo:19} for recent reviews on AP schemes for this type of models. To the best of our knowledge, the design and analysis of Asymptotic Preserving schemes for slow-fast SDEs of the type~\eqref{eq:intro-SDE-av} and~\eqref{eq:intro-SDE-diff} has not been considered so far in the literature. Note that a specific feature (compared with the deterministic case) is the need to consider convergence in distribution. Let us mention related works for Stochastic Partial Differential Equations (SPDEs), in the diffusion approximation regime. First, in~\cite{DuboscqMarty:16,Marty:06}, the authors consider Schr\"odinger equations and study an abstract asymptotic preserving property. However, they do not propose implementable schemes. In~\cite{AyiFaou:19}, the authors deal with some multiscale stochastic kinetic PDEs, driven by a Wiener process. However, the structure of the model is different from the one of~\eqref{eq:intro-SDE-diff}. In a future work~\cite{BrehierHivertRakotonirina}, we plan to apply the findings of this article to the SPDE models considered in~\cite{AyiFaou:19}. The works mentioned above concerning SPDE models are limited to diffusion coefficients of the type $\sigma(x)=x$, for which specific arguments may give a straightforward construction of AP schemes, for appropriate discretization of the fast component. An AP scheme in the case $\sigma(x)=1$ for~\eqref{eq:intro-SDE-diff} is proposed in~\cite{PavliotisStuartZygalakis:09}, however the subtlety of the interpretation of the noise at the limit is not relevant in that case. Finally, let us also mention that AP schemes have also been studied for PDEs with random coefficients, see~\cite{HuJin:17,Jin:18,JinLuPareschi:18} or in the context of Monte-Carlo methods for deterministic problems, see~\cite{DimarcoPareschiSamaey:18,RenLuiJin:14}.

We are now in position to describe the contributions of this article. In Section~\ref{sec:num-gen}, we define the appropriate notion of AP schemes for SDE systems, related to convergence in distribution, and study several general properties.

Our first main result is Theorem~\ref{th:AP-av}, which exhibits an example of AP scheme in the averaging regime: for the simplified version~\eqref{eq:intro-SDE-av}, the scheme is given by
\begin{equation}\label{eq:intro-AP-av}
    \left\lbrace
    \begin{aligned}
        X^\epsilon_{n+1} &= X^\epsilon_n + \Delta t b(X^\epsilon_n,m^\epsilon_{n+1}),\\
        m^\epsilon_{n+1} &= e^{-\frac{\Delta t}{\epsilon}} m^\epsilon_n + \sqrt{1 - e^{-\frac{2\Delta t}{\epsilon}}}  \gamma_n.
    \end{aligned}
    \right.
\end{equation}
The fast component in the scheme above is discretized using a scheme which is exact in distribution.

Our second main result is Theorem~\ref{th:UA-av}, which states error estimates of the type
\begin{equation*}
\underset{\epsilon\in(0,1]}\sup~\big|\E[\varphi(X_N^\epsilon)-\E[\varphi(X^\epsilon(T))]\big|={\rm O}\bigl(\sqrt{\Delta t}\bigr),
\end{equation*}
for sufficiently smooth real-valued mappings $\varphi$. This error estimate means that the scheme is Uniformly Accurate.

Finally, our third main result is Theorem~\ref{th:AP-diff}, which exhibits an example of AP scheme in the diffusion approximation regime: for the simplified version~\eqref{eq:intro-SDE-diff} (see Corollary~\ref{cor:diff-ex1}), the scheme is given by
\begin{equation}\label{eq:intro-AP-diff}
\left\lbrace
\begin{aligned}
m_{n+1}^\epsilon&=m_n^\epsilon-\frac{\Delta t}{\epsilon^2}m_{n+1}^\epsilon+\frac{\sqrt{\Delta t}}{\epsilon}\gamma_n,\\
Y_{n+1}^\epsilon&=X_n^\epsilon+\sigma(X_n^\epsilon)\frac{\Delta t m_{n+1}^\epsilon}{\epsilon},\\
X_{n+1}^\epsilon&=X_n^\epsilon+\frac{\sigma(X_n^\epsilon)+\sigma(Y_{n+1}^\epsilon)}{2}\frac{\Delta tm_{n+1}^\epsilon}{\epsilon}.
\end{aligned}
\right.
\end{equation}
A prediction-correction method is employed to retrieve the correct interpretation of the noise for the limiting equation: the scheme~\eqref{eq:intro-SDE-diff} is indeed consistent with the Stratonovich interpretation of the noise.

Let us also mention that another situation is considered in Corollary~\ref{cor:diff-ex2}: for the model~\eqref{eq:SDE-diff-ex2} taken from~\cite{LaibeBrehierLombart:20} (with an application in astrophysics), the limiting equation~\eqref{eq:SDE-limit-diff-ex2} contains a so-called noise-induced drift-term, which is captured only for well-designed AP schemes.

Some numerical experiments (see Section~\ref{sec:exp}) show that the AP schemes~\eqref{eq:intro-AP-av} and~\eqref{eq:intro-AP-diff} are effective in all regimes $\epsilon>0$ and $\epsilon\to 0$, contrary to the schemes~\eqref{eq:intro-nonAP-av} and~\eqref{eq:intro-nonAP-diff} which fail to capture the correct limiting behavior when $\epsilon\to 0$.

The article is organized as follows. The general SDE models in the averaging and diffusion approximation regimes are presented in Sections~\ref{sec:models-av} and~\ref{sec:models-diff}. The main results of this article are stated in Section~\ref{sec:num}: the general theory of AP schemes is presented in Section~\ref{sec:num-gen}, and it is applied in the averaging and diffusion approximation regimes in Section~\ref{sec:num-av} and~\ref{sec:num-diff} respectively. Numerical experiments are reported in Section~\ref{sec:exp}. Section~\ref{sec:error} is devoted to the proof of the error estimates stated in Theorem~\ref{th:UA-av}. Finally, Section~\ref{sec:conclusion} gives some conclusions and perspectives.

\section{Slow-fast SDE models and their limits}\label{sec:models}

Without loss of generality, the time-scale separation parameter $\epsilon$ satisfies $\epsilon\in(0,1]$. The time-step size of the integrators studied in this work is denoted by $\Delta t$. It is assumed that $\Delta t=\frac{T}{N}$ where $T\in(0,\infty)$ is a fixed time and $N\in\N$. Without loss of generality, it is assumed that $\Delta t\in(0,1]$.

In the slow-fast systems considered in this work, the slow component $X^\epsilon$ takes values in the $d$-dimensional flat torus $\T^d$, where $d\in\N$ is an arbitrary integer, whereas the fast component $m^\epsilon$ takes values in $\R$. The framework and the models considered in this work may be generalized in many ways to more complex situations, however the arguments and results below are sufficient to illustrate the difficulties of designing asymptotic preserving schemes for stochastic equations.

Let $\bigl(\beta_t\bigr)_{t\ge 0}$ and $\bigl(B_t\bigr)_{t\ge 0}$ be two independent standard Wiener processes, with values in $\R$ and $\R^D$ respectively, where $D\in\N$, defined on a probability space $(\Omega,\mathcal{F},\mathbb{P})$ which satisfies the usual conditions.

The following notation for derivatives is used below: $\nabla_x=\bigl(\partial_{x_i} \bigr)_{1\le i\le d}\in \R^d$ and $\partial_m$ are the partial gradient and derivative operators with respect to $x$ and $m$ respectively. If $\sigma$ is a mapping with values in $\mathcal{M}_{d,D}(\R)$ (the space of $d\times D$ matrices with real entries), let $\sigma^\star$ denote the transpose of $\sigma$, and set $\sigma \sigma^\star : \nabla_x^2=\sum_{i,j=1}^{d}\bigl(\sigma\sigma^\star)_{i,j} \partial_{x_i}\partial_{x_j}$. If $b$ is a $\R^d$-valued mapping, let $\scal{b}{\nabla_x}=\sum_{i=1}^{d}b_i \partial_{x_i}$.

\begin{hyp}\label{ass:init}
The initial conditions $X_0^\epsilon\in\T^d$ and $m_0^\epsilon\in\R$ of the processes are deterministic quantities and they satisfy
\[
X_0^\epsilon=x_0^\epsilon \xrightarrow[\epsilon\to 0]{} x_0~,\quad \sup_{\epsilon\in (0,1]}|m_0^\epsilon|<\infty.
\]
\end{hyp}

\subsection{The averaging regime}\label{sec:models-av}

In the so-called averaging regime, we consider slow-fast SDE systems of the type
\begin{equation}\label{eq:SDE-av}
\left\lbrace
\begin{aligned}
    dX^\epsilon_t &= b(X^\epsilon_t,m^\epsilon_t) dt + \sigma(X^\epsilon_t,m^\epsilon_t) dB_t,\\
    dm^\epsilon_t &= - \frac{m_t^\epsilon}{\epsilon} dt + \frac{\sqrt{2}h(X^\epsilon_t)}{\sqrt{\epsilon}} d\beta_t.
\end{aligned}
\right.
\end{equation}

The coefficients appearing in~\eqref{eq:SDE-av} are assumed to satisfy the following conditions.
\begin{hyp}\label{ass:coeffs-av}
    The functions $b:\T^d\times\R\to\R^d$ and $\sigma:\T^d\times\R\to\mathcal{M}_{d,D}(\R)$ are assumed to be of class $\C^4$, and $h:\T^d\to\R$ is assumed to be of class $\C^1$. Moreover, they are all assumed to be bounded and to have bounded derivatives.
\end{hyp}

Owing to Assumption~\ref{ass:coeffs-av}, for all initial conditions $X_0^\epsilon\in\T^d$ and $m_0^\epsilon\in\R$, and for every $\epsilon\in(0,1]$, there exists a unique global solution $\bigl(X^\epsilon(t),m^\epsilon(t)\bigr)_{t\ge 0}$ of the SDE system~\eqref{eq:SDE-av}. Since $h$ is bounded, it is straightforward to check that
\begin{equation}\label{eq:moment_m-eps-t}
    \sup_{\epsilon \in (0,1]} \sup_{t \geq 0} \frac{\E[|m^\epsilon(t)|^2]}{1+|m_0^\epsilon|^2}<\infty.
\end{equation}
This estimate will prove useful to prove Proposition~\ref{propo:limit-SDE-av}.

The infinitesimal generator $\L^\epsilon$ associated with the SDE~\eqref{eq:SDE-av} has the following expression:
\begin{equation}\label{eq:L^epsilon-av}
\L^\epsilon = \frac{1}{\epsilon} \L_{OU} + \L_0,
\end{equation}
where
\begin{equation}\label{eq:L_01-av}
    \begin{aligned}
        \L_0 &= \scal{b(x)}{\nabla_x}+\frac{1}{2}\sigma\sigma^\star:\nabla_x^2,\\
        \L_{OU} &= - m \partial_m + h(x)^2 \partial_m^2,\\
    \end{aligned}
\end{equation}
Observe that for fixed $x\in\T^d$, $\L_{OU}$ is the generator of an ergodic Ornstein-Uhlenbeck process. The associated invariant distribution is $\nu^x=\mathcal{N}(0,h(x)^2)$.

Define averaged coefficients as follows: for all $x\in \T^d$
\begin{equation}\label{eq:averagedcoeffs1}
    \overline{b}(x)=\int b(x,m)d\nu^x(m)~,\quad \overline{a}(x)=\int \sigma(x,m)\sigma(x,m)^\star d\nu^x(m).
\end{equation}
Note that $\overline{b}:\T^d\to\R^d$ is of class $\C^4$. The averaging principle result stated below requires the following condition to be satisfied.
\begin{hyp}\label{ass:av}
    There exists an integer $\overline{D}\in\N$ and a function $\overline{\sigma}:\T^d\to \mathcal{M}_{d,\overline{D}}(\R)$ of class $\C^4$ such that for all $x\in\T^d$
    \begin{equation}\label{eq:averagedcoeffs2}
        \overline{a}(x)=\overline{\sigma}(x)\overline{\sigma}(x)^\star.
    \end{equation}
\end{hyp}
Assumption~\ref{ass:av} holds if there exists $c\in(0,\infty)$ such that $\overline{a}(x)\ge cI$ for all $x\in\T^d$ (as symmetric matrices). This condition is satisfied when $\sigma$ only depends on the slow variable $x$ ($\partial_m\sigma(x,m)=0$ for all $(x,m)\in\T^d\times\R$), or when $\sigma(x,m)\sigma(x,m)^\star\ge cI$ for all $(x,m)\in\T^d\times\R$. In that case, one can choose $\overline{D}=d$. If the diffusion coefficient is of the type $\sigma(x,m)=\sigma^\sharp(m)\sigma^\dagger(x)$, with $\sigma^\dagger(x)\in\R^d$ and $\sigma^\sharp(m)\in\R$, then one can choose $\overline{D}=D$ and $\overline{\sigma}(x)=\sigma^\dagger(x) \sqrt{\int \sigma^\sharp(m)^2 d\nu^x(m)}$ for all $x\in\T^d$.

We are now in position to state the averaging principle result and to define the limiting process $X$ obtained when $\epsilon\to 0$.
\begin{propo}\label{propo:limit-SDE-av}
    Let Assumptions~\ref{ass:init},~\ref{ass:coeffs-av} and~\ref{ass:av} be satisfied. Let $T\in(0,\infty)$. When $\epsilon\to 0$, the $\mathcal{C}([0,T],\T^d)$-valued process $\bigl(X^\epsilon(t)\bigr)_{0\le t\le T}$ converges in distribution to the solution $\bigl(X(t)\bigr)_{0\le t\le T}$ of the limiting SDE
    \begin{equation}\label{eq:limit_SDE-av}
        dX_t=\overline{b}(X_t)dt+\overline{\sigma}(X_t)d\overline{B}_t,
    \end{equation}
    with initial condition $X(0)=x_0$, where the coefficients $\overline{b}$ and $\overline{\sigma}$ are defined by~\eqref{eq:averagedcoeffs1}--\eqref{eq:averagedcoeffs2}, and where $\bigl(\overline{B}_t\bigr)_{t\ge 0}$ is a standard $\R^{\overline{D}}$-valued Wiener process.

    The infinitesimal generator $\L$ associated with the limiting SDE~\eqref{eq:limit_SDE-av} is given by
    \begin{equation}\label{eq:limit_generator-av}
        \L=\scal{\overline{b}(x)}{\nabla_x}+\frac{1}{2}\overline{\sigma}~\overline{\sigma}^\star:\nabla_x^2,
    \end{equation}
    and is such that the following property holds: let $\varphi \in \C^4(\T^d)$, then there exists a function $\varphi^1:\T^d\times\R\to\R$ such that
    \begin{align}
        \varphi^\epsilon&=\varphi+\epsilon\varphi_1, \label{eq:def_phi-epsilon-av}\\
        \L^\epsilon \varphi^\epsilon &\xrightarrow[\epsilon\to 0]{} \L\varphi. \label{eq:generator_convergence-av}
    \end{align}

    Finally, let $\varphi \in \C^4(\T^d)$, then there exists $C(T,\varphi)\in(0,\infty)$ such that
    \begin{equation}\label{eq:error-av}
        \big|\E[\varphi(X^\epsilon(T))]-\E[\varphi(X(T))]\big|\le C(T,\varphi)\epsilon.
    \end{equation}
\end{propo}
The averaging principle stated in Proposition~\ref{propo:limit-SDE-av} is a standard result, see for instance~\cite[Chapter~16]{PavliotisStuart}. In general the convergence stated in Proposition~\ref{propo:limit-SDE-av} only holds in distribution, however it holds in stronger sense (for instance in mean-square sense) if $\sigma$ only depends on $x$.

We refer to Appendix~\ref{sec:app-av} for a sketch of the construction of the perturbed test function $\varphi^\epsilon$ which satisfies~\eqref{eq:def_phi-epsilon-av}--\eqref{eq:generator_convergence-av} (see~\cite[Chapter~6]{FouqueGarnierPapanicolaouSolna:07} for a detailed description of the perturbed test function method). Note that the perturbed test function appears in Proposition~\ref{propo:AP} below. For the error estimate~\eqref{eq:error-av}, see Lemma~\ref{lem2} and its proof below.

\subsection{The diffusion approximation regime}\label{sec:models-diff}

\subsubsection{General model}\label{sec:models-diff-gen}

In the so-called diffusion approximation regime, we consider slow-fast SDE systems of the type
\begin{equation}\label{eq:SDE-diff}
    \left\{
    \begin{aligned}
        dX_t^\epsilon &= b(X_t^\epsilon) dt + \frac{\sigma(X_t^\epsilon)m_t^\epsilon}{\epsilon} dt,\\
        dm_t^\epsilon&=f(X_t^\epsilon)\Bigl(-\frac{m_t^\epsilon}{\epsilon^2}dt+\frac{g(X_t^\epsilon)}{\epsilon}dt+\frac{h(X_t^\epsilon)}{\epsilon}d\beta_t\Bigr).
    \end{aligned}
    \right.
\end{equation}

The coefficients appearing in~\eqref{eq:SDE-diff} are assumed to satisfy the following conditions.
\begin{hyp}\label{ass:coeffs-diff}
The functions $b:\T^d\to\R^d$ and $g,h:\T^d\to\R$ are assumed to be of class $\mathcal{C}^1$. The functions $\sigma:\T^d\to\R^d$ and $f:\T^d\to\R$ are assumed to be of class $\mathcal{C}^2$. Moreover, $f$ takes values in $(0,\infty)$: we assume that ${\displaystyle \min_{x \in \mathbb T^d} f(x) > 0}$.
\end{hyp}
Owing to Assumption~\ref{ass:coeffs-diff}, for all initial conditions $X_0^\epsilon\in\T^d$ and $m_0^\epsilon\in\R$, and for every $\epsilon\in(0,1]$, there exists a unique global solution $\bigl(X^\epsilon(t),m^\epsilon(t)\bigr)_{t\ge 0}$ of the SDE system~\eqref{eq:SDE-diff}. The infinitesimal generator $\L^\epsilon$ associated with the SDE~\eqref{eq:SDE-diff} has the following expression:
\begin{equation}\label{eq:L^epsilon-diff}
\L^\epsilon = \frac{1}{\epsilon^2} \L_{OU} + \frac{1}{\epsilon} \L_1 + \L_0,
\end{equation}
where
\begin{equation}\label{eq:L_012-diff}
    \begin{aligned}
        \L_0 &= \scal{b(x)}{\nabla_x},\\
        \L_1 &= \scal{m \sigma(x)}{\nabla_x} + f(x) g(x) \partial_m,\\
        \L_{OU} &= - f(x) m \partial_m + \frac 1 2 f(x)^2 h(x)^2 \partial_m^2.
    \end{aligned}
\end{equation}
Observe that for fixed $x\in\T^d$, $\L_{OU}$ is the generator of an ergodic Ornstein-Uhlenbeck process. The associated invariant distribution is $\nu^x=\mathcal{N}(0,\frac{f(x)h(x)^2}{2})$.

We are now in position to state the diffusion approximation result and to define the limiting process $X$ obtained when $\epsilon\to 0$.
\begin{propo}\label{propo:limit-SDE-diff}
Let Assumptions~\ref{ass:init} and~\ref{ass:coeffs-diff} be satisfied. Let $T\in(0,\infty)$. When $\epsilon\to 0$, the $\mathcal{C}([0,T],\T^d)$-valued process $\bigl(X^\epsilon(t)\bigr)_{0\le t\le T}$ converges in distribution to the solution $\bigl(X(t)\bigr)_{0\le t\le T}$ of the limiting SDE
\begin{equation}
 \label{eq:limit_SDE-diff}
    dX_t = \left( b + g \sigma + \frac{h^2}{2}(\scal{\sigma}{\nabla_x})\sigma - \frac{h^2}{2f}\scal{\sigma}{\nabla_x f}\sigma\right) (X_t) dt + h(X_t) \sigma(X_t) dW_t,
\end{equation}
driven by a standard one-dimensional Wiener process $\bigl(W(t)\bigr)_{t\ge 0}$, with initial condition $X(0)=x_0$.

The infinitesimal generator $\L$ associated with the limiting SDE~\eqref{eq:limit_SDE-diff} is given by
\begin{align}\label{eq:limit_generator-diff}
    \L \varphi &= \scal{\left(b + g \sigma\right)}{\nabla_x \varphi} + \scal{\frac{h^2 f \sigma}{2}}{\nabla_x \left( \scal{\frac{\sigma}{f}}{\nabla_x \varphi} \right)}\\
    &=\scal{\left(b + g \sigma\right)}{\nabla_x \varphi}+\frac{h^2}{2}\sigma \sigma^\star : \nabla_x^2\varphi \nonumber\\
    &\phantom{= }+\frac{h^2}{2} \scal{(\scal{\sigma}{\nabla_x})\sigma}{\nabla_x \varphi}-\frac{h^2}{2f}\scal{\sigma}{\nabla_x f}\scal{\sigma}{\nabla_x\varphi},\nonumber
\end{align}
and is such that the following property holds: let $\varphi \in \C^3(\T^d)$, then one constructs two functions $\varphi_1,\varphi_2:\T^d\times\R\to\R$, such that
\begin{align}
    \varphi^\epsilon&=\varphi+\epsilon\varphi_1+\epsilon^2\varphi_2, \label{eq:def_phi-epsilon-diff}\\
    \L^\epsilon \varphi^\epsilon &\xrightarrow[\epsilon\to 0]{} \L\varphi. \label{eq:generator_convergence-diff}
\end{align}

Finally, let $\varphi\in\C^3(\T^d)$, then there exists $C(T,\varphi)\in(0,\infty)$ such that
\begin{equation}\label{eq:error-diff}
\big|\E[\varphi(X^\epsilon(T))]-\E[\varphi(X(T))]\big|\le C(T,\varphi)\epsilon.
\end{equation}
\end{propo}
The diffusion approximation stated in Proposition~\ref{propo:limit-SDE-diff} is a standard result, see for instance~\cite[Chapter~18]{PavliotisStuart}. We refer to Appendix~\ref{sec:app-diff} for a sketch of the construction of the perturbed test function $\varphi^\epsilon$ which satisfies~\eqref{eq:def_phi-epsilon-diff}--\eqref{eq:generator_convergence-diff} (see~\cite[Chapter~6]{FouqueGarnierPapanicolaouSolna:07} for a detailed description of the perturbed test function method). Since the error estimate~\eqref{eq:error-diff} plays no role in the sequel, the proof is omitted. We refer to~\cite{KhasminskiiYin:05} for arguments using asymptotic expansions of solutions of Kolmogorov equations leading to~\eqref{eq:error-diff}, (see also~\cite{LaibeBrehierLombart:20} for related computations).

\subsubsection{Two examples in the approximation-diffusion regime}\label{sec:models-diff-ex}

The setting described above encompasses several interesting examples of SDE systems. In order to focus on the different possible issues which need to be overcome when constructing asymptotic preserving numerical schemes in the regime $\epsilon\to 0$, we deal with two examples described below. In addition, the asymptotic preserving numerical schemes will have simpler formulations for these examples than in the general case. In both examples, dimension is set equal to $d=1$ to simplify the presentation, and $b=0$.

Let us present the first example: consider the system
\begin{equation}\label{eq:SDE-diff-ex1}
\left\lbrace
\begin{aligned}
dX_t^\epsilon&=\frac{\sigma(X_t^\epsilon)m_t^\epsilon}{\epsilon}dt,\\
dm_t^\epsilon&=-\frac{m_t^\epsilon}{\epsilon^2}dt+\frac{1}{\epsilon}d\beta_t,
\end{aligned}
\right.
\end{equation}
where the coefficients in the fast equation are constant: $f(x)=h(x)=1$ and $g(x)=0$ for all $x\in\T$. Applying Proposition~\ref{propo:limit-SDE-diff} in this example yields the following limiting equation
\begin{equation}\label{eq:SDE-limit-diff-ex1}
dX_t=\sigma(X_t)\circ dW_t,
\end{equation}
where the noise is interpreted using the Stratonovich convention. With the It\^o convention, the equation is written as
\[
dX_t=\frac12\sigma(X_t)\sigma'(X_t)dt+\sigma(X_t)dW_t.
\]
Note that the diffusion approximation result (Proposition~\ref{propo:limit-SDE-diff}) may be obtained by straightforward arguments in two cases, which will be repeated at the discrete-time levels. Let $\zeta^\epsilon(t)=\frac{1}{\epsilon}\int_0^tm^\epsilon(s)ds$ for all $t\ge 0$. First, if $\sigma(x)=1$ for all $x\in\T$, then one has $dX_t^\epsilon=d\zeta_t^\epsilon$. Therefore passing to the limit yields
\[
X^\epsilon(t)=X_0^\epsilon+\zeta^\epsilon(t)\xrightarrow[\epsilon\to 0]{} x_0+W(t),
\]
and the limiting equation is $dX_t=dW_t$. Second, assume that $x$, $X^\epsilon(t)$ and $X(t)$ take values in the real line $\R$ (instead of the torus $\T$) and that $\sigma(x)=x$ for all $x\in\R$. Then~\eqref{eq:SDE-limit-diff-ex1} is written as $dX_t^\epsilon=X_t^\epsilon d\zeta_t^\epsilon$. Computing the solution and passing to the limit then yields
\[
X^\epsilon(t)=X_0^\epsilon \exp\bigl(\zeta^\epsilon(t)\bigr)\xrightarrow[\epsilon\to 0]{} x_0\exp(W(t))=X(t),
\]
and the limiting equation is $dX_t=X_t\circ dW_t$.

Note that when the function $\sigma$ is not constant, the It\^o and Stratonovich interpretations differ. Constructing an asymptotic preserving requires to capture the correction term in a limiting scheme (which will naturally be associated with an It\^o interpretation of the noise).

Let us now present the second example, taken from~\cite{LaibeBrehierLombart:20}. The coefficients $f,g,h$ are allowed to depend on the slow component $x$, whereas it is assumed that $\sigma(x)=1$ for all $x\in\T$. Therefore, the system in the second example has the following expression
\begin{equation}\label{eq:SDE-diff-ex2}
\left\lbrace
\begin{aligned}
dX_t^\epsilon&=\frac{m_t^\epsilon}{\epsilon}dt,\\
dm_t^\epsilon&=f(X_t^\epsilon)\Bigl(-\frac{m_t^\epsilon}{\epsilon^2}dt+\frac{g(X_t^\epsilon)}{\epsilon}dt+\frac{h(X_t^\epsilon)}{\epsilon}d\beta_t\Bigr),
\end{aligned}
\right.
\end{equation}
Applying Proposition~\ref{propo:limit-SDE-diff} in this example yields the following limiting equation
\begin{equation}\label{eq:SDE-limit-diff-ex2}
    dX_t = g(X_t) dt - \frac{h(X_t)^2 f'(X_t)}{2 f(X_t)} dt + h(X_t) dW_t.
\end{equation}
The noise is interpreted in the It\^o sense. Observe that when $f$ is not constant, the noise-induced drift term $\frac{h^2 f'}{2f}$ appears. The construction of asymptotic preserving schemes for this problem requires to be careful in order to capture this additional drift term in the limiting scheme.

\section{Numerical discretization and asymptotic preserving schemes}\label{sec:num}

The objective of this section is to study the notion of Asymptotic Preserving (AP) schemes for the slow-fast SDE system~\eqref{eq:SDE-av} (averaging regime) or~\eqref{eq:SDE-diff} (diffusion approximation regime) when $\epsilon\to 0$. The fundamental requirements to have an AP scheme are the following ones: given a consistent discretization scheme for the SDE system,
\begin{itemize}
\item for any fixed time-step size $\Delta t>0$, there exists a limiting scheme when $\epsilon\to 0$,
\item this limiting scheme is consistent with the limiting equation~\eqref{eq:limit_SDE-av} given by Proposition~\ref{propo:limit-SDE-av} (averaging regime), or~the limiting equation~\eqref{eq:limit_SDE-diff} given by Proposition~\ref{propo:limit-SDE-diff} (diffusion approximation regime).
\end{itemize}
For the SDE considered in this article, consistency is understood in the sense of convergence in distribution. As will be clear below, caution is needed in order to satisfy the second requirement, indeed some standard but naive schemes converge to a limiting scheme which is not consistent with the correct limiting equation. Using such schemes would be dangerous since it could lead to wrong conclusions about the behavior of the SDE system when $\epsilon\to 0$, hence the need to develop simultaneously the theoretical and numerical analysis.

After discussing general properties of AP schemes, we will provide example of such schemes both for the system~\eqref{eq:SDE-av} (averaging regime) and for the system~\eqref{eq:SDE-diff} (diffusion approximation regime)
. We will also study how this scheme applies to the two examples~\eqref{eq:SDE-diff-ex1} and~\eqref{eq:SDE-diff-ex2} described above, and provide a few examples of non AP schemes.

\subsection{Asymptotic Preserving schemes: definition and properties}\label{sec:num-gen}

Let $T\in(0,\infty)$, and let $N\in\mathbb{N}$ and $\Delta t=\frac{T}{N}$ denote the time-step size. Let $\bigl(\Gamma_n\bigr)_{0\le n\le N-1}$ and $\bigl(\gamma_n\bigr)_{0\le n\le N-1}$ be two independent families of independent standard $\R^D$ and $\R$-valued Gaussian random variables. The initial conditions $X_0^\epsilon$ and $m_0^\epsilon$ are assumed to satisfy Assumption~\ref{ass:init}.

On the one hand, a discretization scheme for the SDE~\eqref{eq:SDE-av} is defined as
\begin{equation}\label{eq:scheme-abstract-av}
(X_{n+1}^\epsilon,m_{n+1}^\epsilon)=\Phi_{\Delta t}^{\epsilon}(X_n^\epsilon,m_n^\epsilon,\Gamma_n,\gamma_n),\quad n=0,\ldots,N-1.
\end{equation}
On the other hand, a discretization scheme for the SDE~\eqref{eq:SDE-diff} is defined as
\begin{equation}\label{eq:scheme-abstract-diff}
(X_{n+1}^\epsilon,m_{n+1}^\epsilon)=\Phi_{\Delta t}^{\epsilon}(X_n^\epsilon,m_n^\epsilon,\gamma_n),\quad n=0,\ldots,N-1.
\end{equation}
The presentation is slightly different in the averaging and diffusion approximation regimes. In the remaining of Section~\ref{sec:num-gen}, only the case of schemes of the type~\eqref{eq:scheme-abstract-av} is considered. This means that if one considers the SDE~\eqref{eq:SDE-diff} and the scheme~\eqref{eq:scheme-abstract-diff} (approximation diffusion regime) the variable $\Gamma_n$ needs to be omitted -- this is also the case if $\sigma=0$ in the SDE~\eqref{eq:SDE-av} (averaging regime).

The mapping $\Phi_{\Delta t}^\epsilon$ appearing in the schemes~\eqref{eq:scheme-abstract-av} and~\eqref{eq:scheme-abstract-diff} is referred to as the integrator in the sequel.

Let us first discuss stability issues. Due to the presence of factors $\frac{1}{\epsilon}$ and $\frac{1}{\epsilon^2}$ in the SDE~\eqref{eq:SDE-av} and~\eqref{eq:SDE-diff}, using the standard Euler-Maruyama scheme would impose strong stability conditions, of the type $\Delta t\le \Delta t_0(\epsilon)$ with $\Delta t_0(\epsilon)\to 0$ when $\epsilon\to 0$. In order to study the behavior of the scheme when $\epsilon\to 0$ for any fixed time-step size $\Delta t$, it is necessary to avoid such conditions, and we impose the following assumption (which is generally satisfied for some implicit or implicit-explicit methods).
\begin{hyp}\label{ass:defined_scheme}
The integrator $\Phi_{\Delta t}^\epsilon$ is defined for all $\epsilon\in(0,1]$ and $\Delta t\in(0,\Delta t_0]$, where $\Delta t_0>0$ is independent of $\epsilon$.
\end{hyp}
We are now in position to study the consistency of the scheme. First, it is assumed that for all $\epsilon\in(0,1]$, the scheme~\eqref{eq:scheme-abstract-av} (resp.~\eqref{eq:scheme-abstract-diff}) is consistent with the SDE system~\eqref{eq:SDE-av} (resp.~\eqref{eq:SDE-diff}). When dealing with numerical methods for SDEs, there exist several notions of convergence: in almost sure sense, in probability, in mean-square sense, or in distribution. Since Propositions~\ref{propo:limit-SDE-av} and~\ref{propo:limit-SDE-diff} state that $X^\epsilon$ converges in distribution to $X$ when $\epsilon$, the relevant notion is consistency in the weak sense, related to convergence in distribution.
\begin{hyp}\label{ass:consistency}
For all $\epsilon\in(0,1]$, the numerical scheme~\eqref{eq:scheme-abstract-av} (resp.~\eqref{eq:scheme-abstract-diff}) is consistent in the weak sense with the SDE system~\eqref{eq:SDE-av} (resp.~\eqref{eq:SDE-diff}): for all bounded continuous functions $\varphi:\T^d\times\R$,
\[
\E[\varphi(X_N^\epsilon,m_N^\epsilon)]\xrightarrow[N\to \infty]{} \E[\varphi(X^\epsilon(T),m^\epsilon(T))],
\]
where the time-step size is given by $\Delta t=\frac{T}{N}$, for an arbitrary $T\in(0,\infty)$.
\end{hyp}

Recall that the consistency in the weak sense of the scheme can be verified using the following equivalent criterion, expressed in terms of the integrator and of the infinitesimal generator: for all $\varphi \in \C^2_b(\T^d\times\R)$,
\[
\underset{\Delta t\to 0}\lim~\frac{\E[\varphi(\Phi_{\Delta t}^\epsilon(x,m,\Gamma,\gamma))]-\varphi(x,m)}{\Delta t}=\L^\epsilon \varphi(x,m),
\]
for all $(x,m)\in\T^d\times\R$, where $\Gamma$ and $\gamma$ are two independent standard $\R^D$ and $\R$-valued Gaussian random variables.

The requirements above (Assumptions~\ref{ass:defined_scheme} and~\ref{ass:consistency}) only depend on the behavior of the scheme for fixed $\epsilon\in(0,1]$. We are now in position to study the asymptotic behavior as $\epsilon\to 0$, with fixed time-step size $\Delta t\in(0,\Delta t_0]$. To introduce the notion of Asymptotic Preserving scheme, one first needs to assume the existence of a limiting scheme, as follows.
\begin{hyp}\label{ass:limiting_scheme}
For every $\Delta t\in(0,\Delta t_0]$, there exists a mapping $\Phi_{\Delta t}:\T^d\times\R^2\to \T^d$, such that for every $(x,m)\in\T^d\times\R$, and every bounded continuous function $\varphi:\T^d\to\R$,
\[
\E[\varphi(\Phi_{\Delta t}^{\epsilon}(x,m,\Gamma,\gamma))]\xrightarrow[\epsilon\to 0]{} \E[\varphi(\Phi_{\Delta t}(x,\Gamma,\gamma))]
\]
where $\Gamma$ and $\gamma$ are two independent standard $\R^D$ and $\R$ valued Gaussian random variables.
\end{hyp}
Let $\bigl(X_n\bigr)_{0\le n\le N}$ be defined by
\begin{equation}\label{eq:limitingscheme-recursion}
\begin{aligned}
X_{n+1}&=\Phi_{\Delta t}(X_n,\Gamma_n,\gamma_n),\\
X_0&=x_0=\underset{\epsilon\to 0}\lim~x_0^\epsilon.
\end{aligned}
\end{equation}
where $\bigl(\Gamma_n\bigr)_{0\le n\le N-1}$ and $\bigl(\gamma_n\bigr)_{0\le n\le N-1}$ are two independent families of independent standard $\R^D$ and $\R$ valued Gaussian random variables. By a recursion argument, it is straightforward to check that if Assumptions~~\ref{ass:init} and~\ref{ass:limiting_scheme} are satisfied, then $X_n^\epsilon$ converges in distribution to $X_n$, when $\epsilon\to 0$, for any fixed $\Delta t\in(0,\Delta t_0]$, and $0\le n\le N$.

We are now in position to introduce the notion of Asymptotic Preserving schemes. As for Assumptions~\ref{ass:consistency} and~\ref{ass:limiting_scheme} above, the consistency is understood in the sense of convergence in distribution.
\begin{definition}\label{def:AP}
Let Assumptions~\ref{ass:defined_scheme},~\ref{ass:consistency} and~\ref{ass:limiting_scheme} be satisfied. The scheme~\eqref{eq:scheme-abstract-av} (resp.~\eqref{eq:scheme-abstract-diff}) is said to be Asymptotic Preserving (AP) if the limiting scheme given by Assumption~\ref{ass:limiting_scheme} and~\eqref{eq:limitingscheme-recursion} is consistent, in the weak sense, with the limiting equation given by Proposition~\ref{propo:limit-SDE-av} (resp. Proposition~\ref{propo:limit-SDE-diff}): for every continuous function $\varphi:\T^d\to\R$, one has
\[
\E[\varphi(X_N)]\xrightarrow[N\to\infty]{} \E[\varphi(X(T))],
\]
where $\Delta t=\frac{T}{N}$, with an arbitrary $T\in(0,\infty)$.
\end{definition}

One of the main contributions of this article is the design of AP schemes in the averaging and in the diffusion approximation regimes, see Sections~\ref{sec:num-av} and~\ref{sec:num-diff} respectively.

To conclude this section, Proposition~\ref{propo:AP} and Corollary~\ref{cor:AP} below are general formulations of the AP property in terms of interverting the limits $\epsilon\to 0$ and $\Delta t\to 0$. As explained above, the result is stated only in the averaging regime to simplify the presentation, however the same result holds also in the diffusion approximation regime with straightforward modifications.
\begin{propo}\label{propo:AP}
Let the setting of Definition~\ref{def:AP} be satistied. The following statements are equivalent.
\begin{enumerate}
\item[$(i)$] The scheme~\eqref{eq:scheme-abstract-av} is Asymptotic Preserving.
\item[$(ii)$] For any continuous function $\varphi:\T^d\to\R$, one has
\[
\underset{\Delta t\to 0}\lim~\underset{\epsilon\to 0}\lim~\E[\varphi(X_N^\epsilon)]=\underset{\epsilon\to 0}\lim~\underset{\Delta t\to 0}\lim~\E[\varphi(X_N^\epsilon)],
\]
where $T=N\Delta t$.
\item[$(iii)$] For any $\varphi \in \C^3(\T^d)$, for all $(x,m)\in\T^d\times\R$, one has
\begin{multline*}
\underset{\Delta t\to 0}\lim~\underset{\epsilon\to 0}\lim~\frac{\E[\varphi^\epsilon(\Phi_{\Delta t}^{\epsilon}(x,m,\Gamma,\gamma))]-\varphi(x)}{\Delta t}\\=\underset{\epsilon\to 0}\lim~\underset{\Delta t\to 0}\lim~\frac{\E[\varphi^\epsilon(\Phi_{\Delta t}^{\epsilon}(x,m,\Gamma,\gamma))]-\varphi(x)}{\Delta t},
\end{multline*}
where $\varphi^\epsilon=\varphi+\epsilon\varphi^1$ is the function introduced by the perturbed test function approach (see~\eqref{eq:def_phi-epsilon-av}, Proposition~\ref{propo:limit-SDE-av} or~\eqref{eq:def_phi-epsilon-diff}, Proposition~\ref{propo:limit-SDE-diff}), and $\Gamma$ and $\gamma$ are independent $\R^D$ and $\R$ valued standard Gaussian  random variables.
\end{enumerate}
\end{propo}
Note that using the perturbed test function approach (see Propositions~\ref{propo:limit-SDE-av} and~\ref{propo:limit-SDE-diff}) is the relevant point of view for the statement ~$(iii)$ above.

\begin{proof}[Proof of Proposition~\ref{propo:AP}]
The equivalence of $(i)$ and $(ii)$ is straightforward. Indeed
\begin{align*}
\underset{\Delta t\to 0}\lim~\underset{\epsilon\to 0}\lim~\E[\varphi(X_N^\epsilon)]&=\underset{\Delta t\to 0}\lim~\E[\varphi(X_N)],\\
\underset{\epsilon\to 0}\lim~\underset{\Delta t\to 0}\lim~\E[\varphi(X_N^\epsilon)]&=\underset{\epsilon\to 0}\lim~\E[\varphi(X^\epsilon(T))]=\E[\varphi(X(T))],
\end{align*}
using Assumptions~\ref{ass:consistency} and~\ref{ass:limiting_scheme} and Proposition~\ref{propo:limit-SDE-av}. The two quantities coincide if and only if the limiting scheme is consistent with the limiting equation.

It remains to prove that $(i)$ and $(iii)$ are equivalent. On the one hand, note that
\[
\underset{\Delta t\to 0}\lim~\underset{\epsilon\to 0}\lim~\frac{\E[\varphi^\epsilon(\Phi_{\Delta t}^{\epsilon}(x,m,\Gamma,\gamma))]-\varphi(x)}{\Delta t}=\underset{\Delta t\to 0}\lim~\frac{\E[\varphi(\Phi_{\Delta t}(x,\Gamma,\gamma))]-\varphi(x)}{\Delta t},
\]
using the fact that $\varphi^\epsilon-\varphi={\rm O}(\epsilon)$ and the definition of the limiting scheme from Assumption~\ref{ass:limiting_scheme}.

On the other hand, one has
\[
\underset{\epsilon\to 0}\lim~\underset{\Delta t\to 0}\lim~\frac{\E[\varphi^\epsilon(\Phi_{\Delta t}^{\epsilon}(x,m,\Gamma,\gamma))]-\varphi(x)}{\Delta t}=\underset{\epsilon\to 0}\lim~\L^\epsilon \varphi^\epsilon(x,m)=\L \varphi(x),
\]
using the consistency of the scheme for fixed $\epsilon$ (Assumption~\ref{ass:consistency}), and the property~\eqref{eq:generator_convergence-av}, by construction of the perturbed test function $\varphi^\epsilon$.

Then $(iii)$ is equivalent to having
\[
\underset{\Delta t\to 0}\lim~\frac{\E[\varphi(\Phi_{\Delta t}(x,\gamma))]-\varphi(x)}{\Delta t}=\L \varphi(x),
\]
which means consistency in the weak sense of the limiting scheme with the limiting equation~\eqref{eq:limit_SDE-av}.

This concludes the proof of Proposition~\ref{propo:AP}.
\end{proof}

The following result is a simple criterion to check whether a scheme satisfies the asymptotic preserving property.
\begin{corollary}\label{cor:AP}
Assume that for all $\varphi \in \C^2(\T^d)$, one has
\[
\tilde{\L}\varphi(x)=\underset{\Delta t\to 0}\lim~\frac{\E[\varphi(\Phi_{\Delta t}(x,\gamma))]-\varphi(x)}{\Delta t}
\]
where $\tilde{\L}$ is a second-order differential operator.

Then the scheme is AP if and only if the property stated in $(iii)$ in Proposition~\ref{propo:AP} holds with $\varphi(x)=x_i$ and $\varphi(x)=x_ix_j$, with $1\le i,j\le d$.
\end{corollary}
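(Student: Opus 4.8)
The plan is to show that, under the stated hypothesis that the limiting integrator $\Phi_{\Delta t}$ is weakly consistent with \emph{some} second-order differential operator $\tilde{\L}$, one can reduce the AP property — characterized by condition $(iii)$ of Proposition~\ref{propo:AP} — from all test functions $\varphi\in\C^3(\T^d)$ to the finite family of monomials $\varphi(x)=x_i$ and $\varphi(x)=x_ix_j$. The key observation is that $(iii)$, once reformulated as in the proof of Proposition~\ref{propo:AP}, is exactly the statement that $\tilde{\L}\varphi=\L\varphi$ for all $\varphi$ in the relevant class, and that two second-order differential operators agree as soon as they agree on these monomials.

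First I would invoke the chain of equalities established inside the proof of Proposition~\ref{propo:AP}: for $\varphi\in\C^3(\T^d)$, the left-hand side of the identity in $(iii)$ equals $\underset{\Delta t\to 0}\lim~\frac{\E[\varphi(\Phi_{\Delta t}(x,\gamma))]-\varphi(x)}{\Delta t}$, which by the hypothesis of the corollary equals $\tilde{\L}\varphi(x)$; and the right-hand side equals $\L\varphi(x)$. Hence $(iii)$ holds for a given $\varphi$ if and only if $\tilde{\L}\varphi(x)=\L\varphi(x)$ for all $x\in\T^d$, and the scheme is AP if and only if $\tilde{\L}=\L$ as operators acting on $\C^3(\T^d)$ (note that $\L$ given by~\eqref{eq:limit_generator-av} or~\eqref{eq:limit_generator-diff} is itself a second-order differential operator, so both sides are of the same type). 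The point $x_0$ plays no role here since both the corollary's hypothesis and $(iii)$ are pointwise in $x$.

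Next I would carry out the elementary algebraic step: a second-order differential operator on $\T^d$ with continuous coefficients is determined by its action on the coordinate functions and their pairwise products. Writing $\tilde{\L}-\L = \sum_{i,j}a_{ij}(x)\partial_{x_i}\partial_{x_j} + \sum_i c_i(x)\partial_{x_i} + e(x)$ with $a_{ij}$ symmetric, one recovers $e$ by testing against constants (which here vanishes automatically since both $\L$ and $\tilde{\L}$ kill constants, or can be subsumed), $c_i$ by testing against $\varphi(x)=x_i$, and $a_{ij}$ by testing against $\varphi(x)=x_ix_j$ and using the already-known values of the $c_i$. (A mild care point: on the torus the functions $x_i$ are only locally defined, so I would phrase this using a smooth function that agrees with $x_i$ near an arbitrary point, or equivalently note that the symbol of a differential operator is a local object; this is the kind of routine localization I would not belabor.) Therefore $\tilde{\L}=\L$ on all of $\C^2(\T^d)\supset\C^3(\T^d)$ if and only if $\tilde{\L}\varphi=\L\varphi$ for $\varphi(x)=x_i$ and $\varphi(x)=x_ix_j$, $1\le i,j\le d$, which by the first step is exactly the assertion of $(iii)$ restricted to these monomials.

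Combining the two steps: the scheme is AP $\iff$ $(iii)$ holds for all $\varphi\in\C^3(\T^d)$ $\iff$ $\tilde{\L}=\L$ $\iff$ $(iii)$ holds for the monomials $x_i$ and $x_ix_j$. I do not anticipate a serious obstacle; the only genuinely delicate point is the bookkeeping needed to legitimately plug the (locally defined) coordinate functions and their products into $(iii)$ on the torus, and the observation that the hypothesis "$\tilde{\L}$ is a second-order differential operator" is precisely what upgrades agreement on this finite test family to agreement as operators — without that structural assumption the reduction would fail.
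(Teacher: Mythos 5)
Your argument is correct and is precisely the ``straightforward'' reduction the paper has in mind (the paper omits the proof): use the chain of equalities from the proof of Proposition~\ref{propo:AP} to identify both sides of $(iii)$ with $\tilde{\L}\varphi(x)$ and $\L\varphi(x)$ respectively, then observe that two second-order differential operators annihilating constants coincide as soon as they agree on the (locally defined) functions $x_i$ and $x_ix_j$. Your remark that the structural hypothesis on $\tilde{\L}$ is what makes the finite test family sufficient, and your care about localizing the coordinate functions on $\T^d$, are both apt.
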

The proof of Corollary~\ref{cor:AP} is straightforward and is thus omitted.

\subsection{An example of AP scheme in the averaging regime}\label{sec:num-av}

The objective of this section is to propose an example of AP for the SDE model~\eqref{eq:SDE-av}, see Theorem~\ref{th:AP-av}, in the averaging regime. The challenge is to capture the averaged coefficients $\overline{b}$ and $\overline{\sigma}$, given by~\eqref{eq:averagedcoeffs1} and~\eqref{eq:averagedcoeffs2}.

\begin{theo}\label{th:AP-av}
Introduce the following numerical scheme:
\begin{equation}\label{eq:scheme-AP-av}
    \left\lbrace
    \begin{aligned}
        X^\epsilon_{n+1} &= X^\epsilon_n + \Delta t b(X^\epsilon_n,m^\epsilon_{n+1}) + \sqrt{\Delta t} \sigma(X^\epsilon_n,m^\epsilon_{n+1}) \Gamma_n\\
        m^\epsilon_{n+1} &= e^{-\frac{\Delta t}{\epsilon}} m^\epsilon_n + \sqrt{1 - e^{-\frac{2\Delta t}{\epsilon}}} h(X^\epsilon_n) \gamma_n.
    \end{aligned}
    \right.
\end{equation}
This scheme satisfies Assumptions~\ref{ass:defined_scheme},~\ref{ass:consistency} and~\ref{ass:limiting_scheme} and is Asymptotic Preserving in the sense of Definition~\ref{def:AP}. Moreover the limiting scheme is given by
\begin{equation}\label{eq:scheme-limit-av}
    X_{n+1} = X_n + \Delta t b(X_n,h(X_n) \gamma_n) + \sqrt{\Delta t} \sigma(X_n,h(X_n) \gamma_n) \Gamma_n,
\end{equation}
\end{theo}
Let us discuss some properties of the AP scheme~\eqref{eq:scheme-AP-av} and of the limiting scheme~\eqref{eq:scheme-limit-av}. To simplify the discussion, assume that $h(x)=1$. First, assume that $\sigma=0$. Note that even if the limiting equation~\eqref{eq:limit_SDE-av} is a deterministic ordinary differential equation, the scheme~\eqref{eq:scheme-limit-av} is random. However, in that case, the convergence of $X_N$ to $X(T)$ when $\Delta t\to 0$ holds in probability, instead of only in distribution; in that case, the averaging principle result stated in Proposition~\ref{propo:limit-SDE-av} also holds in probability (and even in mean-square sense). The fundamental property to obtain the AP property is that the random quantity appearing in the limiting scheme~\eqref{eq:scheme-limit-av} satisfies the property
\begin{equation}\label{eq:conditioning-av}
    \E[b(X_n,h(X_n)\gamma_n)|X_n]=\overline{b}(X_n).
\end{equation}
In the AP scheme~\eqref{eq:scheme-AP-av}, the fast component is discretized exactly in distribution (when $h(x)=1$): for all $n\ge 0$, the Gaussian random variables $m_n^\epsilon$ and $m^\epsilon(n\Delta t)$ are equal in distribution. The fundamental property written above cannot be satisfied if one uses for instance the implicit Euler scheme to discretize the fast component: the scheme defined by
\begin{equation}\label{eq:scheme-nonAP-av}
    \left\lbrace
    \begin{aligned}
        X^\epsilon_{n+1} &= X^\epsilon_n + \Delta t b(X^\epsilon_n,m^\epsilon_{n+1})\\
        m^\epsilon_{n+1} &= m_n^\epsilon-\frac{\Delta t}{\epsilon}m^\epsilon_{n+1}+\sqrt{2\frac{\Delta t}{\epsilon}}\gamma_n,
    \end{aligned}
    \right.
\end{equation}
is not asymptotic preserving, since the associated limiting scheme is
\[
X_{n+1}=X_n+\Delta tb(X_n,0).
\]
using the identity
\[
m_{n+1}^\epsilon=\frac{1}{1+\frac{\Delta t}{\epsilon}}m_n^\epsilon+\frac{\sqrt{2\frac{\Delta t}{\epsilon}}}{1+\frac{\Delta t}{\epsilon}}\gamma_n\underset{\epsilon\to 0}\to 0,
\]
to pass to the limit.

Second, assume that $\sigma$ is not equal to $0$. Then the convergence of $X_n^\epsilon$ to $X_n$ only holds in distribution in general. It does not hold in mean-square sense in the following case: assume that $d=1$, and that $b(x)=0$ and $\sigma(x,m)=m$ (in that example, the convergence in Proposition~\ref{propo:limit-SDE-av} also does not hold in the mean-square sense). Assume also for simplicity that $x_0^\epsilon=0$, and that $m_0^\epsilon=m_0\sim\mathcal{N}(0,1)$ (and is independent of the Wiener processes $\beta$ and $B$). Then one has $\overline{\sigma}(x)=1$, thus
\[
X_{n}=\sum_{k=0}^{n-1}\sqrt{\Delta t}\Gamma_k\quad,\quad
X_{n}^{\epsilon}=\sum_{k=0}^{n-1}\sqrt{\Delta t}m_{k+1}^\epsilon\Gamma_k,
\]
and one obtains
\[
\E|X_n^\epsilon-X_n|^2=\Delta t\sum_{k=0}^{n-1}\E|m_{k+1}^\epsilon-1|^2=n\Delta t\E|m_0-1|^2,
\]
and the right-hand side does not depend on $\epsilon$. It is thus natural to consider convergence in distribution in the notion of asymptotic preserving schemes for SDEs.

Finally, note also that, as above, the scheme
\begin{equation*}
    \left\lbrace
    \begin{aligned}
        X^\epsilon_{n+1} &= X^\epsilon_n + \sqrt{\Delta t}\sigma(X_n^\epsilon,m_{n+1}^\epsilon)\Gamma_n\\
        m^\epsilon_{n+1} &= m_n^\epsilon-\frac{\Delta t}{\epsilon}m^\epsilon_{n+1}+\sqrt{2\frac{\Delta t}{\epsilon}}\gamma_n,
    \end{aligned}
    \right.
\end{equation*}
is not asymptotic preserving, since the associated limiting scheme is
\[
X_{n+1}=X_n+\sqrt{\Delta t}\sigma(X_n,0)\Gamma_n.
\]

We are now in position to prove Theorem~\ref{th:AP-av}.
\begin{proof}[Proof of Theorem~\ref{th:AP-av}]
It is straightforward to check that Assumption~\ref{ass:defined_scheme} is satisfied. Let us prove that Assumption~\ref{ass:limiting_scheme} holds. We have
\begin{align*}
    \Phi_{\Delta t}^{\epsilon}(x,m,\Gamma,\gamma) &= x + \Delta t b(x,m') + \sqrt{\Delta t} \sigma(x,m') \Gamma\\
    \Phi_{\Delta t}(x,\Gamma,\gamma) &= x + \Delta t b(x,h(x)\gamma) + \sqrt{\Delta t} \sigma(x,h(x)\gamma) \Gamma,
\end{align*}
with $m'= e^{-\frac{\Delta t}{\epsilon}} m + \sqrt{1 - e^{-\frac{2\Delta t}{\epsilon}}} h(x) \gamma$. When $\epsilon \to 0$, $m'$ converges almost surely to $h(x) \gamma$, thus $\Phi_{\Delta t}^{\epsilon}(x,m,\Gamma,\gamma)$ converges in distribution to $\Phi_{\Delta t}(x,\Gamma,\gamma)$, and Assumption~\ref{ass:limiting_scheme} is satisfied.

It remains to prove that the scheme satisfies Assumption~\ref{ass:consistency} and is asymptotic preserving in the sense of Definition~\ref{def:AP}, namely that the schemes~\eqref{eq:scheme-AP-av} and~\eqref{eq:scheme-limit-av} are consistent (in the weak sense), with~\eqref{eq:SDE-av} and~\eqref{eq:limit_SDE-av} respectively.

Let $\epsilon > 0$ be fixed.
Since $h$ is bounded, it is straightforward to check that
\begin{equation} \label{eq:moment_m-eps-n}
    \sup_{\epsilon\in(0,1]} \sup_{n\ge 0} \frac{\E[\abs{m^\epsilon_n}^2]}{1 + \abs{m^\epsilon_0}^2} < \infty.
\end{equation}
This estimate will prove useful to prove Lemma~\ref{lem4} of Theorem~\ref{th:UA-av}. Since $b$ and $\sigma$ are also bounded, we get, in $\mathbb L^1(\Omega)$, when $\Delta t \to 0$
\begin{align*}
    m^\epsilon_{n+1} - m^\epsilon_n &= - \frac{\Delta t}{\epsilon} m^\epsilon_n + \sqrt{\frac{2\Delta t}{\epsilon}} h(X^\epsilon_n) \gamma_n + {\rm o}(\Delta t),\\
    X^\epsilon_{n+1} - X^\epsilon_n &= \Delta t b(X_n^\epsilon) + \sqrt{\Delta t} \sigma(X_n^\epsilon) \Gamma_n + {\rm o}(\Delta t).
\end{align*}
Thus, using that $\Gamma_n$, $\gamma_n$ and $X^\epsilon_n$ are independent, we get the second order Taylor expansion of $\varphi \in \mathcal C^2_b(\T^d \times \R)$,
\begin{align*}
    \E[\varphi(&X_{n+1}^\epsilon,m_{n+1}^\epsilon)] - \E[\varphi(X_n^\epsilon,m_n^\epsilon)]\\
    &= \Delta t \E[b(X^\epsilon_n,m^\epsilon_{n+1}) \cdot \nabla_x \varphi(X^\epsilon_n,m^\epsilon_n)]+ \frac{1}{2} \Delta t \E[\sigma \sigma^* (X^\epsilon_n,m^\epsilon_{n+1}) : \nabla_x^2 \varphi(X^\epsilon_n,m^\epsilon_n)]\\
    &\phantom{=} - \frac{\Delta t}{\epsilon} \E[m^\epsilon_n \partial_m \varphi(X^\epsilon_n,m^\epsilon_n)]+ \frac{\Delta t}{\epsilon} \E[h(X^\epsilon_n)^2 \partial_m^2 \varphi(X^\epsilon_n,m^\epsilon_n)] + {\rm o}(\Delta t)\\
    &= \Delta t \E[\L^\epsilon \varphi(X^\epsilon_n,m^\epsilon_n)] + {\rm o}(\Delta t).
\end{align*}
From there, it is straightforward to check that Assumption~\ref{ass:consistency} is satisfied.

Similarly, to prove the consistency of the limiting scheme~\eqref{eq:scheme-limit-av} with~\eqref{eq:limit_SDE-av}, for $\varphi \in \mathcal C^2(\T^d)$, when $\Delta t \to 0$, observe that one has
\begin{multline*}
    \E[\varphi(X_{n+1})] - \E[\varphi(X_n)] = \Delta t \E[b(X_n,h(X_n) \gamma_n) \cdot \nabla_x \varphi(X_n)]\\
    + \frac{1}{2} \Delta t \E[\sigma \sigma^* (X_n,h(X_n) \gamma_n) : \nabla_x^2 \varphi(X_n)] + {\rm o}(\Delta t).
\end{multline*}
The key argument of this proof is the following: by conditioning with respect to $X_n$ and the definitions~\eqref{eq:averagedcoeffs1}--\eqref{eq:averagedcoeffs2} of the averaged coefficients, using the fundamental property~\eqref{eq:conditioning-av} for $\overline b$ and $\overline \sigma \, \overline \sigma^* = \overline{\sigma \sigma^*}$, yields
\begin{equation*}
    \E[\varphi(X_{n+1})] = \E[\varphi(X_n)] + \Delta t \E[\L \varphi(X_n)] + {\rm o}(\Delta t).
\end{equation*}
The limiting scheme is thus consistent with the limiting equation. This concludes the proof of Theorem~\ref{th:AP-av}.
\end{proof}

Beyond the asymptotic preserving property, it is possible to obtain error estimate, and to prove that the scheme~\eqref{eq:scheme-AP-av} given in Theorem~\ref{th:AP-av} is uniformly accurate (in distribution).
\begin{theo}\label{th:UA-av}
Let Assumptions~\ref{ass:init},~\ref{ass:coeffs-av} and~\ref{ass:av} be satisfied. For any $T\in(0,\infty)$ and any function $\varphi:\T^d\to\R$ of class $\C^4$, there exists $C(T,\varphi)\in(0,\infty)$ such that for all $\Delta t\in(0,\Delta t_0]$ and $\epsilon\in(0,1]$ one has
\begin{equation}\label{eq:UA-1}
\big|\E[\varphi(X_N^\epsilon)-\E[\varphi(X^\epsilon(T))]\big|\le C(T,\varphi)\min\Bigl(\frac{\Delta t}{\epsilon},\Delta t+\epsilon\Bigr),
\end{equation}
and the scheme~\eqref{eq:scheme-AP-av} is uniformly accurate with the following error estimate: for all $\Delta t\in(0,\Delta t_0]$, one has
\begin{equation}\label{eq:UA-2}
\underset{\epsilon\in(0,1]}\sup~\big|\E[\varphi(X_N^\epsilon)-\E[\varphi(X^\epsilon(T))]\big|\le C(T,\varphi)\sqrt{\Delta t}.
\end{equation}
\end{theo}
The error estimate~\eqref{eq:UA-2} implies that the error $\big|\E[\varphi(X_N^\epsilon)-\E[\varphi(X^\epsilon(T))]\big|$ goes to $0$ when $\Delta t\to 0$ uniformly with respect to $\epsilon\to 0$. Note that~\eqref{eq:UA-2} is a straightforward consequence of~\eqref{eq:UA-1}, considering the cases $\sqrt{\Delta t}\le \epsilon$ and $\epsilon\le \sqrt{\Delta t}$ separately. This argument implies a reduction in the order of convergence appearing in~\eqref{eq:UA-2}: it is equal to $\frac12$ whereas for fixed $\epsilon>0$ (in~\eqref{eq:UA-1}) or when $\epsilon=0$ the order of convergence is equal to $1$.

The proof of Theorem~\ref{th:UA-av} is long, technical and requires several auxiliary results, it is thus postponed to Section~\ref{sec:error}.

\subsection{An example of AP scheme in the diffusion approximation regime}\label{sec:num-diff}

The objective of this section is to propose an example of AP scheme for the SDE model~\eqref{eq:SDE-diff}, see Theorem~\ref{th:AP-diff}, in the diffusion approximation regime. The challenge is to let the limiting scheme capture the additional drift term appearing in the limiting equation~\eqref{eq:limit_SDE-diff} when $\sigma$ or $f$ is not constant.
\begin{theo}\label{th:AP-diff}
Let $\theta \in [\frac 1 2, 1]$.
Introduce the following numerical scheme:
\begin{equation}\label{eq:scheme-AP-diff-general}
    \left\lbrace
    \begin{aligned}
        \hat{m}_{n+1}^\epsilon &= m_n^\epsilon - \frac{\Delta t f(X_n^\epsilon) \hat m^\epsilon_{n+\theta}}{\epsilon^2} + \frac{\Delta t f(X_n^\epsilon) g(X_n^\epsilon)}{\epsilon} + \frac{f(X_n^\epsilon) h(X_n^\epsilon) \sqrt{\Delta t} \gamma_n}{\epsilon},\\
        \hat{X}_{n+1}^\epsilon &= X_n^\epsilon + \Delta t b(X_n^\epsilon) + \sigma(X_n^\epsilon) \frac{\Delta t \hat m^\epsilon_{n+\theta}}{\epsilon},\\
        m_{n+1}^\epsilon &= m_n^\epsilon - \frac{\Delta t f(\hat{X}_{n+1}^\epsilon) m^\epsilon_{n+\theta}}{\epsilon^2} + \frac{\Delta t f(\hat{X}_{n+1}^\epsilon) g(X_n^\epsilon)}{\epsilon} + \frac{f(X_n^\epsilon) h(X_n^\epsilon) \sqrt{\Delta t} \gamma_n}{\epsilon},\\
        Y_{n+1}^\epsilon &= X_n^\epsilon + \Delta t b(X_n^\epsilon) + \sigma(X_n^\epsilon) \frac{\Delta t m^\epsilon_{n+\theta}}{\epsilon},\\
        X_{n+1}^\epsilon &= X_n^\epsilon + \Delta t b(X_n^\epsilon) + \frac{\sigma(X_n^\epsilon) + \sigma(Y_{n+1}^\epsilon)}{2} \frac{\Delta t}{\epsilon} \frac{\hat m^\epsilon_{n+\theta} + m^\epsilon_{n+\theta}}{2},
    \end{aligned}
    \right.
\end{equation}
where
\begin{gather*}
    \hat m^\epsilon_{n+\theta} = (1-\theta) m^\epsilon_n + \theta \hat m^\epsilon_{n+1}\\
    m^\epsilon_{n+\theta} = (1-\theta) m^\epsilon_n + \theta m^\epsilon_{n+1}.
\end{gather*}
This scheme satisfies Assumptions~\ref{ass:defined_scheme},~\ref{ass:consistency} and~\ref{ass:limiting_scheme} and is Asymptotic Preserving in the sense of Definition~\ref{def:AP}. Moreover the limiting scheme is given by
\begin{equation}\label{eq:scheme-limit-diff-general}
    \left\lbrace
    \begin{aligned}
        \hat{X}_{n+1} &= X_n + \Delta t \left( b(X_n) + g(X_n) \sigma(X_n) \right) + \sigma(X_n) h(X_n) \sqrt{\Delta t} \gamma_n,\\
        Y_{n+1} &= X_n + \Delta t \bigl( b(X_n) + g(X_n) \sigma(X_n) \bigr) + \sigma(X_n) h(X_n) \frac{f(X_n)}{f(\hat{X}_{n+1})} \sqrt{\Delta t} \gamma_n,\\
        X_{n+1} &= X_n + \Delta t \left(b(X_n) + g(X_n) \frac{\sigma(X_n) + \sigma(Y_{n+1})}{2}\right)\\
        &\quad + \frac{\sigma(X_n) + \sigma(Y_{n+1})}{2} \frac{1 + \frac{f(X_n)}{f(\hat{X}_{n+1})}}{2} h(X_n) \sqrt{\Delta t} \gamma_n.
    \end{aligned}
    \right.
\end{equation}
\end{theo}
The design of the scheme~\ref{eq:scheme-AP-diff-general} is based on a carefully chosen prediction-correction procedure. The limiting scheme~\eqref{eq:scheme-limit-diff-general} then also contains prediction steps which are the key elements to satisfy the consistency with the limiting SDE~\eqref{eq:limit_SDE-diff}. The choice of the prediction-correction procedure is made clearer looking at the two examples~\eqref{eq:SDE-diff-ex1} and~\eqref{eq:SDE-diff-ex2}, see below Corollaries~\ref{cor:diff-ex1} and~\ref{cor:diff-ex2} respectively. The prediction-correction procedure is crucial to obtain the AP property for the scheme: the following simpler scheme (with $\theta=1$ to simplify the presentation)
\begin{equation} \label{eq:scheme-nonAP-diff}
\left\lbrace
\begin{aligned}
X_{n+1}^\epsilon&=X_n+\Delta tb(X_n^\epsilon)+\sigma(X_n^\epsilon)\frac{\Delta t}{\epsilon}m_{n+1}^\epsilon,\\
m_{n+1}^\epsilon&=m_n^\epsilon-\frac{f(X_n^\epsilon)\Delta t}{\epsilon^2}m_{n+1}^\epsilon+\frac{f(X_n^\epsilon)g(X_n^\epsilon)\Delta t}{\epsilon}+\frac{f(X_n^\epsilon)h(X_n^\epsilon)\sqrt{\Delta t}}{\epsilon}\gamma_n,
\end{aligned}
\right.
\end{equation}
is not asymptotic preserving, since the associated limiting scheme (see the proof of Theorem~\ref{th:AP-diff} for the derivation of the limiting scheme) is
\[
X_{n+1}=X_n+\Delta t\bigl(b(X_n)+g(X_n)\sigma(X_n)\bigr)+h(X_n)\sigma(X_n)\sqrt{\Delta t}\gamma_n.
\]
This limiting scheme is consistent with the SDE $dX_t=\bigl(b(X_t)+g(X_t)\sigma(X_t)\bigr)dt+h(X_t)\sigma(X_t)dW_t$, which differs in general -- when $\sigma$ or $f$ is non constant -- from the correct limiting equation~\eqref{eq:limit_SDE-diff}.

Observe that in the AP scheme~\eqref{eq:scheme-AP-diff-general} the fast component $m^\epsilon$ is discretized using the $\theta$-method. Choosing $\theta\in[\frac12,1]$ ensures the mean-square stability of the scheme (Assumption~\ref{ass:defined_scheme}), uniformly with respect to $\epsilon$. Note that the same quantity $(1-\theta)m_n^\epsilon + \theta\hat{m}_{n+1}^\epsilon$ appears in the expressions of $\hat{m}_{n+1}^\epsilon$ and $\hat{X}_{n+1}^{\epsilon}$ in~\eqref{eq:scheme-AP-diff-general}. Similarly, the same quantity $(1-\theta)m_n^\epsilon + \theta{m}_{n+1}^\epsilon$ appears in the expressions of ${m}_{n+1}^\epsilon$ and $Y_{n+1}^{\epsilon}$ in~\eqref{eq:scheme-AP-diff-general}: this highlights the fact that in order to get a limiting scheme, it is fundamental to choose the quadrature rules in this consistent way.

\begin{rem}
There would be no loss of generality to assume that $b=0$. Another example of AP scheme would be obtained in the case $b\neq 0$, using a splitting technique: combining the scheme~\eqref{eq:scheme-AP-diff-general} with $b=0$, with a standard explicit Euler scheme to treat the contribution of $b$. Writing the expression of the resulting scheme is left to the reader.
\end{rem}

\begin{proof}[Proof of Theorem~\ref{th:AP-diff}]
It is straightforward to check that Assumption~\ref{ass:defined_scheme} is satisfied.

Let us prove that Assumption~\ref{ass:limiting_scheme} holds, namely that~\eqref{eq:scheme-AP-diff-general} converges to~\eqref{eq:scheme-limit-diff-general} when $\epsilon \to 0$. Note that for fixed $\Delta t>0$ and $0\le n\le N$, one has
\begin{equation} \label{eq:moment-scheme-AP-general}
    \sup_{\epsilon > 0} \E[|m_n^\epsilon| + |\hat m_n^\epsilon|] < +\infty.
\end{equation}
This is proved by a straightforward recursion argument. As a consequence, one obtains convergence of the quantity,
\begin{equation*}
        \frac{\Delta t\hat m^\epsilon_{n+\theta}}{\epsilon}  = \Delta t g(X_n^\epsilon) + h(X_n^\epsilon) \sqrt{\Delta t}\gamma_n - \frac{\epsilon}{f(X_n^\epsilon)} \left( \hat m_{n+1}^\epsilon - m_n^\epsilon \right).
\end{equation*}
Thus one has $Y_{n+1}^\epsilon \xrightarrow[\epsilon \to 0]{} Y_{n+1}$ and $\hat X_{n+1}^\epsilon \xrightarrow[\epsilon \to 0]{} \hat X_{n+1}$. Similarly, one obtains the convergence of $\frac{\Delta t m^\epsilon_{n+\theta}}{\epsilon}$, which yields $Y_{n+1}^\epsilon \xrightarrow[\epsilon \to 0]{} Y_{n+1}$ and $X_{n+1}^\epsilon \xrightarrow[\epsilon \to 0]{} X_{n+1}$.

It remains to prove that the scheme satisfies Assumption~\ref{ass:consistency} and is asymptotic preserving in the sense of Definition~\ref{def:AP}, namely that the schemes~\eqref{eq:scheme-AP-diff-general} and~\eqref{eq:scheme-limit-diff-general} are consistent (in the weak sense), with~\eqref{eq:SDE-diff} and~\eqref{eq:limit_SDE-diff} respectively.

On the one hand, let $\epsilon > 0$ be fixed. To prove that~\eqref{eq:scheme-AP-diff-general} is consistent with~\eqref{eq:SDE-diff}, it is sufficient to prove that, for $\varphi \in \C^2_b(\T^d \times \R)$, when $\Delta t \to 0$,
\begin{equation} \label{eq:consistency-scheme-AP-diff}
    \E[\varphi(X_{n+1}^\epsilon,m_{n+1}^\epsilon)] = \E[\varphi(X_n^\epsilon,m_n^\epsilon)] + \Delta t \E[\L^\epsilon \varphi(X_n^\epsilon,m_n^\epsilon)] + {\rm o}(\Delta t).
\end{equation}
It is straightforward to check that, in $\mathbb L^1(\Omega)$,
\begin{equation*}
    \left( \hat m_{n+1}^\epsilon, \hat X_{n+1}^\epsilon, m_{n+1}^\epsilon, Y_{n+1}^\epsilon \right) = \left( \hat m_n^\epsilon, X_n^\epsilon, m_n^\epsilon, X_n^\epsilon \right) + {\rm o}(1),
\end{equation*}
hence
\begin{align*}
    m_{n+1}^\epsilon &= m_n^\epsilon + \sqrt{\Delta t} \frac{f(X_n^\epsilon) h(X_n^\epsilon) \gamma_n}{\epsilon} + \Delta t \left( - \frac{f(X_n^\epsilon) m_n^\epsilon}{\epsilon^2} + \frac{f(X_n^\epsilon) g(X_n^\epsilon)}{\epsilon} \right) + {\rm o}(\Delta t),\\
    X_{n+1}^\epsilon &= X_n^\epsilon + \Delta t \left( b(X_n^\epsilon) + \frac{\sigma(X_n^\epsilon) m_n^\epsilon}{\epsilon} \right) + {\rm o}(\Delta t).
\end{align*}
Since $\gamma_n \sim \mathcal N(0,1)$ and the random variables $\gamma_n$ and $X_n$ are independent, one obtains~\eqref{eq:consistency-scheme-AP-diff}.

On the other hand, it remains to prove that the limiting scheme~\eqref{eq:scheme-limit-diff-general} is consistent with~\eqref{eq:limit_SDE-diff}, {\it i.e.} that, for $\varphi \in \C^2(\T^d)$, when $\Delta t \to 0$,
\begin{equation} \label{eq:consistency-scheme-AP-limit}
    \E[\varphi(X_{n+1})] = \E[\varphi(X_n)] + \Delta t \E[\L \varphi(X_n)] + {\rm o}(\Delta t).
\end{equation}
To simplify the presentation, for any function $\psi$, the following notation is used below:
\begin{equation*}
    \psi_n \doteq \psi(X_n), \quad \nabla \psi_n \doteq \nabla \psi(X_n)
\end{equation*}

The key argument of this proof is the analysis of the asymptotic behavior of the quantity $\frac{\sigma(X_n) + \sigma(Y_{n+1})}{2} \frac{1 + \frac{f(X_n)}{f(\hat{X}_{n+1})}}{2}$, which appears in the scheme in order to capture the drift terms in the limiting equation~\eqref{eq:limit_SDE-diff}.

First, performing expansions at order $\sqrt{\Delta t}$ for $\hat{X}_{n+1}$ and $Y_{n+1}$ yields
\begin{equation*}
    \hat{X}_{n+1} = X_n + \sqrt{\Delta t} h_n \gamma_n \sigma_n + {\rm o}(\sqrt{\Delta t}), \quad Y_{n+1} = X_n + \sqrt{\Delta t} h_n \gamma_n \sigma_n + {\rm o}(\sqrt{\Delta t}).
\end{equation*}
Second, writing $\frac{\sigma_n + \sigma(Y_{n+1})}{2} = \sigma_n + \frac{\sigma(Y_{n+1}) - \sigma_n}{2}$ and $\frac{f_n}{f(\hat{X}_{n+1})} = \frac{1}{1 + \frac{f(\hat{X}_{n+1}) - f_n}{f_n}}$, one obtains the following expansion at order $\sqrt{\Delta t}$ for the quantity:
\begin{align*}
    &\frac{\sigma_n + \sigma(Y_{n+1})}{2} \frac{1 + \frac{f_n}{f(\hat{X}_{n+1})}}{2}\\
    &= \left( \sigma_n + \frac 1 2 \sqrt{\Delta t} h_n \gamma_n (\sigma_n \cdot \nabla) \sigma_n + {\rm o}(\sqrt{\Delta t}) \right)\left( 1 - \frac 1 {2 f_n} \sqrt{\Delta t} h_n \gamma_n \sigma_n \cdot \nabla f_n+ {\rm o}(\sqrt{\Delta t}) \right)\\
    &= \sigma_n + \sqrt{\Delta t} \left( - \frac 1 {2 f_n} h_n \gamma_n \sigma_n \cdot \nabla f_n \sigma_n + \frac 1 2 h_n \gamma_n (\sigma_n \cdot \nabla) \sigma_n \right)\\
    &\phantom{=} + {\rm o}(\sqrt{\Delta t}).
\end{align*}
Finally, one obtains the following asymptotic expansion of $X_{n+1}$
\begin{multline*}
    X_{n+1} = X_n + \sqrt{\Delta t} h_n \gamma_n \sigma_n\\+ \Delta t \left( b_n + g_n \sigma_n - \frac 1 {2 f_n} h_n^2 \gamma_n^2 \sigma_n \cdot \nabla f_n \sigma_n + \frac 1 2 h_n^2 \gamma_n^2 (\sigma_n \cdot \nabla) \sigma_n \right) + {\rm o}(\Delta t).
\end{multline*}
Since $\gamma_n$ is centered and $\gamma_n$ and $X_n$ are independent random variables, one obtains the first order expansion~\eqref{eq:consistency-scheme-AP-limit}.

This concludes the proof of Theorem~\ref{th:AP-diff}.
\end{proof}

The proposed AP scheme given by Theorem~\ref{th:AP-diff} can be simplified when it is applied to one of the two examples of SDE models introduced in Section~\ref{sec:models-diff-ex}. These two examples are employed in the numerical experiments below. To simplify the presentation, we only consider the case $\theta=1$.

\begin{corollary}\label{cor:diff-ex1}
Consider the SDE~\eqref{eq:SDE-diff-ex1}. The AP scheme~\eqref{eq:scheme-AP-diff-general} given by Theorem~\ref{th:AP-diff} is written as follows:
\begin{equation}\label{eq:scheme-AP-diff-ex1}
\left\lbrace
\begin{aligned}
m_{n+1}^\epsilon&=m_n^\epsilon-\frac{\Delta t}{\epsilon^2}m_{n+1}^\epsilon+\frac{\sqrt{\Delta t}}{\epsilon}\gamma_n,\\
Y_{n+1}^\epsilon&=X_n^\epsilon+\sigma(X_n^\epsilon)\frac{\Delta t m_{n+1}^\epsilon}{\epsilon},\\
X_{n+1}^\epsilon&=X_n^\epsilon+\frac{\sigma(X_n^\epsilon)+\sigma(Y_{n+1}^\epsilon)}{2}\frac{\Delta tm_{n+1}^\epsilon}{\epsilon}.
\end{aligned}
\right.
\end{equation}
The scheme~\eqref{eq:scheme-AP-diff-ex1} is Asymptotic Preserving, and the limiting scheme~\eqref{eq:scheme-limit-diff-general} is written as
\begin{equation}\label{eq:scheme-limit-diff-ex1}
\left\lbrace
\begin{aligned}
Y_{n+1}&=X_n+\sqrt{\Delta t}\sigma(X_n)\gamma_n,\\
X_{n+1}&=X_n+\sqrt{\Delta t}\frac{\sigma(X_n)+\sigma(Y_{n+1})}{2}\gamma_n.
\end{aligned}
\right.
\end{equation}
The limiting scheme~\eqref{eq:scheme-limit-diff-ex1} is consistent with the limiting SDE~\eqref{eq:SDE-limit-diff-ex1}.
\end{corollary}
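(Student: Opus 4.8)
The plan is to derive the corollary purely by specializing the general AP scheme~\eqref{eq:scheme-AP-diff-general} and its limiting scheme~\eqref{eq:scheme-limit-diff-general}, both supplied by Theorem~\ref{th:AP-diff}, to the coefficients of~\eqref{eq:SDE-diff-ex1}: $b=0$, $g\equiv 0$, $f\equiv h\equiv 1$, $d=1$, and $\theta=1$. One first notes that Assumption~\ref{ass:coeffs-diff} is satisfied here ($\sigma\in\C^2$, $\min_{\T}f=1>0$), so Theorem~\ref{th:AP-diff} applies; the AP property of~\eqref{eq:scheme-AP-diff-ex1} will then be immediate once we check that~\eqref{eq:scheme-AP-diff-ex1} is exactly~\eqref{eq:scheme-AP-diff-general} for this model.

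First I would simplify~\eqref{eq:scheme-AP-diff-general}. With $\theta=1$ one has $\hat m^\epsilon_{n+\theta}=\hat m^\epsilon_{n+1}$ and $m^\epsilon_{n+\theta}=m^\epsilon_{n+1}$, and since $b=g=0$ and $f\equiv h\equiv 1$, the first and third lines become the identical linear recursion $\mu=m_n^\epsilon-\frac{\Delta t}{\epsilon^2}\mu+\frac{\sqrt{\Delta t}}{\epsilon}\gamma_n$ for $\mu=\hat m^\epsilon_{n+1}$ and for $\mu=m^\epsilon_{n+1}$ respectively; by the unique solvability guaranteed by Assumption~\ref{ass:defined_scheme}, $\hat m^\epsilon_{n+1}=m^\epsilon_{n+1}$. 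Hence the second and fourth lines collapse to the single predictor $\hat X^\epsilon_{n+1}=Y^\epsilon_{n+1}=X_n^\epsilon+\sigma(X_n^\epsilon)\frac{\Delta t m^\epsilon_{n+1}}{\epsilon}$, and in the fifth line $\frac{\hat m^\epsilon_{n+1}+m^\epsilon_{n+1}}{2}=m^\epsilon_{n+1}$, so it becomes the correction step of~\eqref{eq:scheme-AP-diff-ex1}. This establishes that~\eqref{eq:scheme-AP-diff-ex1} is the AP scheme of Theorem~\ref{th:AP-diff} for this model, so it is AP; and substituting the same coefficients in~\eqref{eq:scheme-limit-diff-general} gives $\hat X_{n+1}=Y_{n+1}=X_n+\sqrt{\Delta t}\sigma(X_n)\gamma_n$ (because $f(X_n)/f(\hat X_{n+1})=1$) and then $X_{n+1}=X_n+\sqrt{\Delta t}\frac{\sigma(X_n)+\sigma(Y_{n+1})}{2}\gamma_n$, i.e.~the limiting scheme~\eqref{eq:scheme-limit-diff-ex1}.

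The consistency of~\eqref{eq:scheme-limit-diff-ex1} with~\eqref{eq:SDE-limit-diff-ex1} already follows from Theorem~\ref{th:AP-diff}, since~\eqref{eq:scheme-limit-diff-general} is consistent in the weak sense with~\eqref{eq:limit_SDE-diff}, and for these coefficients~\eqref{eq:limit_SDE-diff} reduces to $dX_t=\frac12\sigma(X_t)\sigma'(X_t)\,dt+\sigma(X_t)\,dW_t$, the It\^o form of~\eqref{eq:SDE-limit-diff-ex1}. I would nonetheless include the short direct verification, which is transparent here: for $\varphi\in\C^2(\T)$, Taylor expansion of $\sigma(Y_{n+1})$ around $X_n$ in~\eqref{eq:scheme-limit-diff-ex1} gives $X_{n+1}-X_n=\sqrt{\Delta t}\,\sigma(X_n)\gamma_n+\frac12\Delta t\,\sigma(X_n)\sigma'(X_n)\gamma_n^2+{\rm o}(\Delta t)$, and since $\gamma_n$ is centered with $\E[\gamma_n^2]=1$ and independent of $X_n$, a second-order expansion of $\varphi$ yields $\E[\varphi(X_{n+1})]-\E[\varphi(X_n)]=\Delta t\,\E[\frac12\sigma\sigma'\varphi'+\frac12\sigma^2\varphi''](X_n)+{\rm o}(\Delta t)=\Delta t\,\E[\L\varphi(X_n)]+{\rm o}(\Delta t)$, which is precisely weak consistency with~\eqref{eq:SDE-limit-diff-ex1}.

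The proof has essentially no hard step: it is bookkeeping around the two degenerate identities $\hat m^\epsilon_{n+1}=m^\epsilon_{n+1}$, $\hat X^\epsilon_{n+1}=Y^\epsilon_{n+1}$ (and their $\epsilon\to 0$ limits), together with the observation that the predictor--corrector in~\eqref{eq:scheme-limit-diff-ex1} is a Heun-type (trapezoidal) discretization whose expansion produces the Stratonovich drift $\frac12\sigma\sigma'$. The only point deserving a line of care is confirming that Theorem~\ref{th:AP-diff} is applicable, i.e.~that the regularity Assumption~\ref{ass:coeffs-diff} holds in this example.
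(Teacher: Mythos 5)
Your proposal is correct and follows essentially the same route as the paper, which treats Corollary~\ref{cor:diff-ex1} as a direct specialization of Theorem~\ref{th:AP-diff} to $b=0$, $g\equiv 0$, $f\equiv h\equiv 1$, $\theta=1$ (no separate proof is given there). Your bookkeeping — the identities $\hat m^\epsilon_{n+1}=m^\epsilon_{n+1}$ and $\hat X^\epsilon_{n+1}=Y^\epsilon_{n+1}$, the reduction of~\eqref{eq:limit_SDE-diff} to the It\^o form of~\eqref{eq:SDE-limit-diff-ex1}, and the optional direct Taylor-expansion check — is accurate and complete.
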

The prediction-correction procedure appearing in the limiting scheme~\eqref{eq:scheme-limit-diff-ex1} allows to recover the Stratonovich interpretation of the noise in the limiting SDE~\eqref{eq:SDE-limit-diff-ex1}.

\begin{corollary}\label{cor:diff-ex2}
Consider the SDE~\eqref{eq:SDE-diff-ex2}. The AP scheme~\eqref{eq:scheme-AP-diff-general} given by Theorem~\ref{th:AP-diff} is written as follows:
\begin{equation}\label{eq:scheme-AP-diff-ex2}
\left\lbrace
\begin{aligned}
\hat{m}_{n+1}^\epsilon&=m_n^\epsilon-\frac{\Delta tf(X_n^\epsilon)}{\epsilon^2}\hat{m}_{n+1}^\epsilon+\frac{\Delta tf(X_n^\epsilon)g(X_n^\epsilon)}{\epsilon}+\frac{f(X_n^\epsilon)h(X_n^\epsilon)\sqrt{\Delta t}\gamma_n}{\epsilon}\\
\hat{X}_{n+1}^\epsilon&=X_n+\frac{\Delta t\hat{m}_{n+1}^\epsilon}{\epsilon}\\
m_{n+1}^\epsilon&=m_n^\epsilon-\frac{\Delta tf(\hat{X}_{n+1}^\epsilon)}{\epsilon^2}{m}_{n+1}^\epsilon+\frac{\Delta tf(\hat{X}_{n+1}^\epsilon)g(X_n^\epsilon)}{\epsilon}+\frac{f(X_n^\epsilon)h(X_n^\epsilon)\sqrt{\Delta t}\gamma_n}{\epsilon}\\
X_{n+1}^\epsilon&=X_n^\epsilon+\frac{\Delta t}{\epsilon}\frac{\hat{m}_{n+1}^\epsilon+m_{n+1}^\epsilon}{2}
\end{aligned}
\right.
\end{equation}
The scheme~\eqref{eq:scheme-AP-diff-ex2} is Asymptotic Preserving, and the limiting scheme~\eqref{eq:scheme-limit-diff-general} is written as
\begin{equation}\label{eq:scheme-limit-diff-ex2}
\left\lbrace
\begin{aligned}
\hat{X}_{n+1}&=X_n+\Delta tg(X_n)+h(X_n)\sqrt{\Delta t}\gamma_n,\\
X_{n+1}&=X_n+\Delta tg(X_n)+\frac{1+\frac{f(X_n)}{f(\hat{X}_{n+1})}}{2}h(X_n)\sqrt{\Delta t}\gamma_n.
\end{aligned}
\right.
\end{equation}
The limiting scheme~\eqref{eq:scheme-limit-diff-ex2} is consistent with the limiting SDE~\eqref{eq:SDE-limit-diff-ex2}.
\end{corollary}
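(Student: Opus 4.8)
The plan is to obtain Corollary~\ref{cor:diff-ex2} as a direct specialization of Theorem~\ref{th:AP-diff}, supplemented by a short Taylor-expansion argument that identifies the generator of the limiting scheme and matches it with that of~\eqref{eq:SDE-limit-diff-ex2}. First I would substitute $b=0$, $\sigma(x)=1$ and $\theta=1$ into the general AP scheme~\eqref{eq:scheme-AP-diff-general}. With $\theta=1$ one has $\hat m^\epsilon_{n+\theta}=\hat m^\epsilon_{n+1}$ and $m^\epsilon_{n+\theta}=m^\epsilon_{n+1}$; the prediction $\hat X^\epsilon_{n+1}$ reduces to $X^\epsilon_n+\Delta t\,\hat m^\epsilon_{n+1}/\epsilon$; the variable $Y^\epsilon_{n+1}$ enters the update of $X^\epsilon_{n+1}$ only through $\sigma(Y^\epsilon_{n+1})=1$, so it drops out; and the correction factor $\bigl(\sigma(X^\epsilon_n)+\sigma(Y^\epsilon_{n+1})\bigr)/2$ equals $1$. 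One then reads off exactly~\eqref{eq:scheme-AP-diff-ex2}. Applying the same substitutions to the general limiting scheme~\eqref{eq:scheme-limit-diff-general} gives $\hat X_{n+1}=X_n+\Delta t\,g(X_n)+h(X_n)\sqrt{\Delta t}\gamma_n$ together with $X_{n+1}=X_n+\Delta t\,g(X_n)+\frac12\bigl(1+f(X_n)/f(\hat X_{n+1})\bigr)h(X_n)\sqrt{\Delta t}\gamma_n$, which is~\eqref{eq:scheme-limit-diff-ex2}. Since~\eqref{eq:SDE-diff-ex2} is precisely~\eqref{eq:SDE-diff} with these coefficients, Theorem~\ref{th:AP-diff} immediately yields that~\eqref{eq:scheme-AP-diff-ex2} satisfies Assumptions~\ref{ass:defined_scheme},~\ref{ass:consistency} and~\ref{ass:limiting_scheme}, that it is Asymptotic Preserving, and that~\eqref{eq:scheme-limit-diff-ex2} is its limiting scheme.

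It then remains to check directly that the limiting scheme~\eqref{eq:scheme-limit-diff-ex2} is weakly consistent with the It\^o SDE~\eqref{eq:SDE-limit-diff-ex2}; this is a special case of the computation in the proof of Theorem~\ref{th:AP-diff}, but it is cleaner to redo it here. Writing $h_n=h(X_n)$, $f_n=f(X_n)$, $f'_n=f'(X_n)$, I would expand at order $\sqrt{\Delta t}$: $\hat X_{n+1}=X_n+\sqrt{\Delta t}\,h_n\gamma_n+{\rm o}(\sqrt{\Delta t})$, hence $f(\hat X_{n+1})=f_n+\sqrt{\Delta t}\,h_n\gamma_n f'_n+{\rm o}(\sqrt{\Delta t})$ and therefore $\frac12\bigl(1+f_n/f(\hat X_{n+1})\bigr)=1-\frac{1}{2f_n}\sqrt{\Delta t}\,h_n\gamma_n f'_n+{\rm o}(\sqrt{\Delta t})$. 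Substituting into the update of $X_{n+1}$ gives
\[
X_{n+1}=X_n+\sqrt{\Delta t}\,h_n\gamma_n+\Delta t\Bigl(g_n-\frac{h_n^2\gamma_n^2 f'_n}{2f_n}\Bigr)+{\rm o}(\Delta t).
\]
For $\varphi\in\C^2(\T)$, a second-order Taylor expansion together with $\E[\gamma_n]=0$, $\E[\gamma_n^2]=1$ and the independence of $\gamma_n$ and $X_n$ then yields $\E[\varphi(X_{n+1})]=\E[\varphi(X_n)]+\Delta t\,\E[\L\varphi(X_n)]+{\rm o}(\Delta t)$ with $\L\varphi=\bigl(g-\frac{h^2 f'}{2f}\bigr)\varphi'+\frac{h^2}{2}\varphi''$, which is exactly the generator of~\eqref{eq:SDE-limit-diff-ex2}. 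This is the required weak consistency, and completes the verification.

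I do not expect a genuine obstacle, since the statement is essentially bookkeeping once Theorem~\ref{th:AP-diff} is available. The only delicate point will be the appearance of the noise-induced drift $-h^2 f'/(2f)$: it comes solely from the $\mathrm{O}(\sqrt{\Delta t})$ correction to the factor $f_n/f(\hat X_{n+1})$ multiplying the leading $\sqrt{\Delta t}\,h_n\gamma_n$ term, so I would be careful to carry the expansions of $\hat X_{n+1}$ and of $f(\hat X_{n+1})$ one order further than naive counting would suggest, exactly as in the proof of Theorem~\ref{th:AP-diff}, where for $\sigma\equiv 1$ the $(\sigma\cdot\nabla)\sigma$ contribution vanishes and only this drift term survives.
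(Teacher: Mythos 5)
Your proposal is correct and follows exactly the route the paper intends: the corollary is obtained by substituting $b=0$, $\sigma\equiv 1$, $\theta=1$ into the general scheme~\eqref{eq:scheme-AP-diff-general} and its limit~\eqref{eq:scheme-limit-diff-general}, and the consistency check you redo is precisely the $\sigma\equiv 1$ specialization of the expansion in the proof of Theorem~\ref{th:AP-diff}, where the $(\sigma\cdot\nabla)\sigma$ term vanishes and only the noise-induced drift $-h^2f'/(2f)$ survives. No gaps.
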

The prediction-correction procedure appearing in the limiting scheme~\eqref{eq:scheme-limit-diff-ex1} allows to recover the noise-induced drift term appearing in the limiting SDE~\eqref{eq:SDE-limit-diff-ex2}.

\begin{rem}\label{rem:ex1bis}
Consider the first example~\eqref{eq:SDE-diff-ex1}, with $\sigma(x)=x$ and assume that $x$ takes values in the real line $\R$ (instead of the torus $\T$). The following scheme
\begin{equation}\label{eq:scheme-AP-ex1bis}
\left\lbrace
\begin{aligned}
X_{n+1}^\epsilon&=X_n^\epsilon \exp\bigl(\frac{\Delta t}{\epsilon}[(1-\theta)m_n^\epsilon+\theta m_{n+1}^\epsilon]\bigr)\\
m_{n+1}^\epsilon&=m_n^\epsilon-\frac{\Delta t}{\epsilon^2}\bigl[(1-\theta)m_n^\epsilon+\theta m_{n+1}^\epsilon\bigr]+\frac{\sqrt{\Delta t}}{\epsilon}\gamma_n,
\end{aligned}
\right.
\end{equation}
where $\theta\in[\frac12,1]$, is another example of AP scheme for this problem. The limiting scheme is given by
\begin{equation}\label{eq:scheme-limit-ex1bis}
X_{n+1}=X_n\exp(\sqrt{\Delta t}\gamma_n)
\end{equation}
which is consistent with the limiting equation $dX_t=X_t\circ dW_t$.

However, the construction is more subtle if the fast component is discretized using an exponential method: let
\[
m_{n+1}^\epsilon=e^{-\frac{\Delta t}{\epsilon^2}}m_n^\epsilon+\sqrt{\frac{1-e^{-2\frac{\Delta t}{\epsilon^2}}}{2}}\gamma_n,
\]
then $m_{n+1}^\epsilon\underset{\epsilon\to 0}\to \frac{1}{\sqrt{2}}\gamma_n$, and defining
\[
X_{n+1}^\epsilon=X_n^\epsilon \exp\bigl(\frac{\Delta t}{\epsilon}m_{n+1}^\epsilon\bigr)
\]
does not provide an AP scheme, since there exists no limiting scheme when $\epsilon\to 0$.

Inspired by the identity $\frac{1}{\epsilon}\int_0^t m^\epsilon(s) ds=\epsilon(m_0^\epsilon-m^\epsilon(t))+\beta_t$ for all $t\ge 0$, one may set
\[
X_{n+1}^\epsilon=X_n^\epsilon \exp\bigl(\epsilon(m_{n}^\epsilon-m_{n+1}^\epsilon)+\sqrt{\Delta t}\gamma_n\bigr),
\]
so that one has
\[
X_N^\epsilon=x_0^\epsilon\exp\bigl(\epsilon(m_0^\epsilon-m_N^\epsilon)+\sqrt{\Delta t}\sum_{n=0}^{N-1}\gamma_n\bigr).
\]
When $\epsilon\to 0$, one obtains the limiting scheme~\eqref{eq:scheme-limit-ex1bis} again. However, the generalization of this construction to the case $\sigma(x)\neq x$ is not straightforward, whereas the scheme proposed in Theorem~\ref{th:AP-diff} applies directly to the general case.
\end{rem}

\section{Numerical experiments}\label{sec:exp}

To simplify the discussion, the dimension $d$ is set equal to $1$.

\subsection{Illustration in the averaging regime}

The objective of this section is to illustrate qualitatively the superiority of the AP scheme~\eqref{eq:scheme-AP-av} proposed in Section~\ref{sec:num-av}, when the parameter $\epsilon$ is small, compared with the use of crude integrators which are not AP. In particular, the numerical experiments below confirm that the limiting scheme~\eqref{eq:scheme-limit-av} is consistent with the limiting equation~\eqref{eq:limit_SDE-av}.

We consider the equation~\eqref{eq:SDE-av} with a drift given by $b(x,m) = \cos(2 \pi x) e^{-\frac{m^2}{2}}$, and diffusion coefficient $\sigma(x,m)=0$. Let $T=1$, $x_0^\epsilon=1$ and $m_0^\epsilon=0$.

Recall that the AP scheme is given by~\eqref{eq:scheme-AP-av}, the limiting scheme is given by~\eqref{eq:scheme-limit-av} and the limiting equation is given by~\eqref{eq:limit_SDE-av}, with $\overline b(x) = \frac{1}{\sqrt{2}} \cos(2 \pi x)$ and $\overline \sigma = 0$. Let us define $X_n^{\rm ref}$ using the standard Euler scheme applied to this limiting equation:
\begin{equation}\label{eq:scheme-ref-av}
    X_{n+1}^{\rm ref} = X_n^{\rm ref} + \overline b(X_n^{\rm ref}) \Delta t.
\end{equation}

The scheme~\eqref{eq:scheme-ref-av} plays the role of a reference scheme to illustrate the consistency of the limiting scheme~\eqref{eq:scheme-limit-av} with the limiting equation, and to illustrate the fact that the crude scheme defined by~\eqref{eq:scheme-nonAP-av} fails to capture the correct limit and is not AP.

In Figure~\ref{fig:Av1_eps-to-0}, we represent the evolution of $X_{n}^\epsilon$, $X_n$ and $X_{n}^{\rm ref}$ as time $t_n=n\Delta t$ evolves, with $\Delta t = 0.004$ and for different values of $\epsilon$. In Figure~\ref{fig:Av1_AP-limit-ref}, $X_n^\epsilon$ and $X_n$ are computed using the AP scheme~\eqref{eq:scheme-AP-av} and the limit scheme~\eqref{eq:scheme-limit-av}, while in Figure~\ref{fig:Av1_nonAP-ref}, $X^\epsilon_n$ is computed using the crude scheme~\eqref{eq:scheme-nonAP-av}. Observe that, in both case, the scheme converges when $\epsilon \to 0$ and that the AP scheme~\eqref{eq:scheme-AP-av} does capture the correct limiting equation only with AP scheme~\eqref{eq:scheme-AP-av}, as opposed to the crude scheme~\eqref{eq:scheme-nonAP-av}.

\begin{figure}[htbp]
    \subfigure[AP scheme~\eqref{eq:scheme-AP-av} and its limit~\eqref{eq:scheme-limit-av}\label{fig:Av1_AP-limit-ref}]{
        \includegraphics[width=0.45\textwidth]{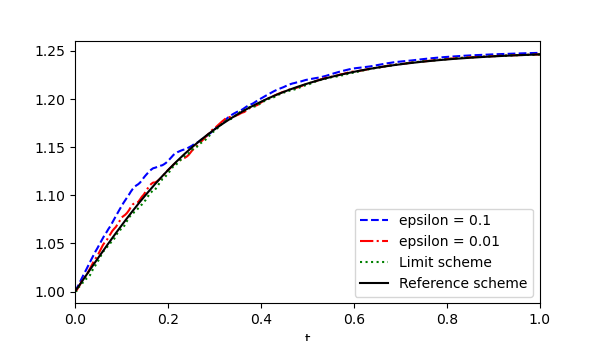}
    }
    \subfigure[Crude scheme~\eqref{eq:scheme-nonAP-av}\label{fig:Av1_nonAP-ref}]{
        \includegraphics[width=0.45\textwidth]{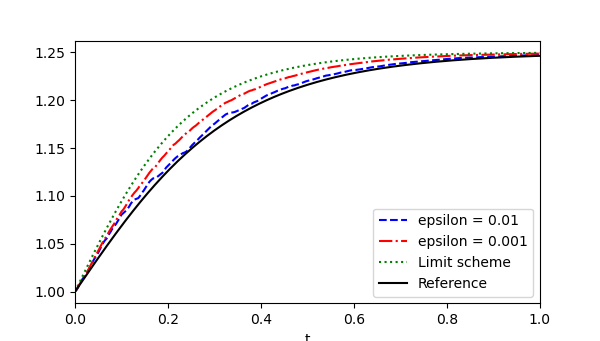}
    }
    \caption{Evolution of the AP scheme~\eqref{eq:scheme-AP-av} (left), the crude scheme~\eqref{eq:scheme-nonAP-av} (right), and the reference scheme~\eqref{eq:scheme-ref-av} (averaging regime), with $\Delta t = 0.004$.\label{fig:Av1_eps-to-0}}
\end{figure}

In Figure~\ref{fig:Av1_AP-nonAP-ref}, we represent the evolution of $X_{n}^\epsilon$ and $X_{n}^{\rm ref}$ as time $t_n=n\Delta t$ evolves, with $\Delta t = 0.004$ and $\epsilon = 0.001$, when $X_n$ is computed using the AP scheme~\eqref{eq:scheme-AP-av} or the crude scheme~\eqref{eq:scheme-nonAP-av}. It illustrates the superiority of the AP scheme over the crude scheme for a small $\epsilon$.

\begin{figure}[htbp]
    \centering
    \includegraphics[width=0.5\textwidth]{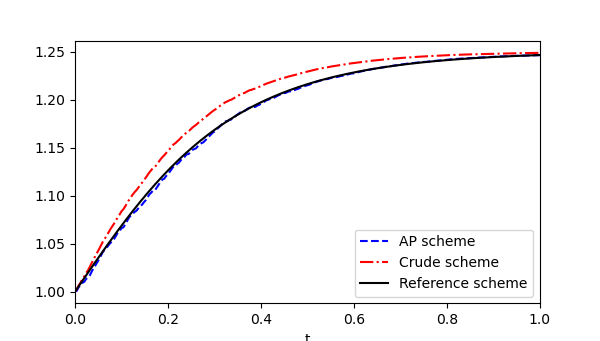}
    \caption{Evolution of the AP scheme~\eqref{eq:scheme-AP-av}, the crude scheme~\eqref{eq:scheme-nonAP-av} and the reference scheme~\eqref{eq:scheme-ref-av} (averaging regime), with $\Delta t = 0.004$ and $\epsilon = 0.001$.\label{fig:Av1_AP-nonAP-ref}}
\end{figure}

\subsection{Illustration in the diffusion approximation regime}

As in the previous section, the objective of this section is to illustrate qualitatively the superiority of the AP scheme~\eqref{eq:scheme-AP-diff-general} proposed in Section~\ref{sec:num-diff}, when the parameter $\epsilon$ is small, compared a not AP scheme.

The two examples described in Sections~\ref{sec:models-diff-ex} are considered below.

\subsubsection{First example}

Let us consider the first example, see Equation~\eqref{eq:SDE-diff-ex1}. The diffusion coefficient is given by $\sigma(x)=\cos(2\pi x)$. Let $T=1$, $x_0^\epsilon=1$ and $m_0^\epsilon=0$.

Recall that the AP scheme derived from the general case~\eqref{eq:scheme-AP-diff-general} in this case is given by~\eqref{eq:scheme-AP-diff-ex1}, the limiting scheme is given by~\eqref{eq:scheme-limit-diff-ex1} and the limiting equation is given by~\eqref{eq:SDE-limit-diff-ex1}. Let us define $X_n^{\rm ref}$ using the standard Euler-Maruyama scheme applied to this limiting equation (rewritten in It\^o form):
\begin{equation}\label{eq:scheme-ref-diff-ex1}
    X_{n+1}^{\rm ref} = X_n^{\rm ref} + \frac 1 2 \sigma(X_n^{\rm ref}) \sigma'(X_n^{\rm ref}) \Delta t + \sigma(X_n^{\rm ref}) \sqrt{\Delta t} \gamma_n.
\end{equation}

The scheme~\eqref{eq:scheme-ref-diff-ex1} plays the role of a reference scheme to illustrate the consistency of the limiting scheme~\eqref{eq:scheme-limit-diff-ex1} with the limiting equation, and to illustrate the fact that the crude scheme defined by~\eqref{eq:scheme-nonAP-diff} fails to capture the correct limit and is not AP.

In Figure~\ref{fig:Diff1_eps-to-0}, we represent the evolution of $X_{n}^\epsilon$ and $X_{n}^{\rm ref}$ as time $t_n=n\Delta t$ evolves, with $\Delta t = 0.004$ and for different values of $\epsilon$. The discretization $X^\epsilon_n$ is computed using the AP scheme~\eqref{eq:scheme-AP-diff-ex1} in Figure~\ref{fig:Diff1_AP-ref} and the crude scheme~\eqref{eq:scheme-nonAP-diff} in Figure~\ref{fig:Diff1_nonAP-ref}. Observe that, in both case, the scheme seem to converge when $\epsilon \to 0$ but only the AP scheme~\eqref{eq:scheme-AP-diff-ex1} captures the correct limiting.

\begin{figure}[htbp]
    \subfigure[AP scheme~\eqref{eq:scheme-AP-diff-ex1}\label{fig:Diff1_AP-ref}]{
        \includegraphics[width=0.45\textwidth]{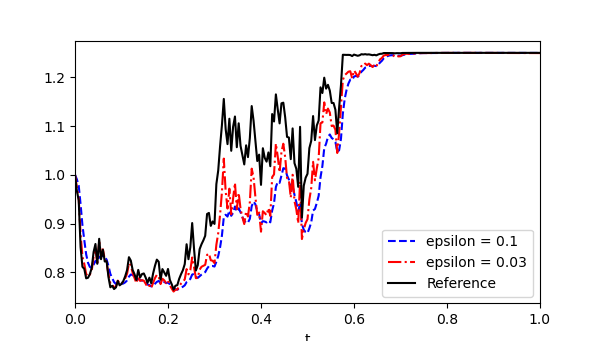}
    }
    \subfigure[Crude scheme~\eqref{eq:scheme-nonAP-diff}\label{fig:Diff1_nonAP-ref}]{
        \includegraphics[width=0.45\textwidth]{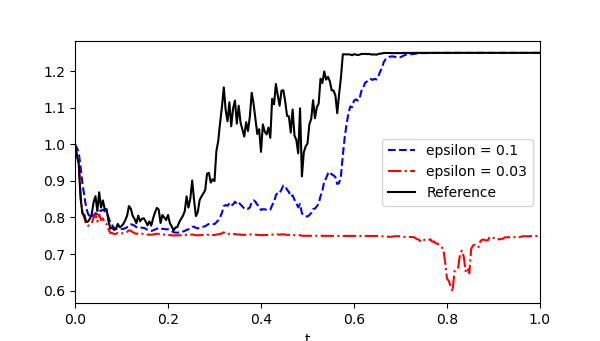}
    }
    \caption{Evolution of the AP scheme~\eqref{eq:scheme-AP-diff-ex1} (left), the crude scheme~\eqref{eq:scheme-nonAP-diff} (right), and the reference scheme~\eqref{eq:scheme-ref-diff-ex1} (diffusion approximation regime regime, first example), with $\Delta t = 0.004$.\label{fig:Diff1_eps-to-0}}
\end{figure}

In Figure~\ref{fig:Diff1_AP-nonAP-ref}, we represent the evolution of $X_{n}^\epsilon$ and $X_{n}^{\rm ref}$ as time $t_n=n\Delta t$ evolves, with $\Delta t = 0.004$ and $\epsilon = 0.01$, when $X_n^\epsilon$ is computed using the AP scheme~\eqref{eq:scheme-AP-diff-ex1} or the crude scheme~\eqref{eq:scheme-nonAP-diff}. Note how the behavior of the crude scheme differs from the reference. It reveals the superiority of the AP scheme for a small $\epsilon$.

\begin{figure}[htbp]
    \centering
    \includegraphics[width=0.5\textwidth]{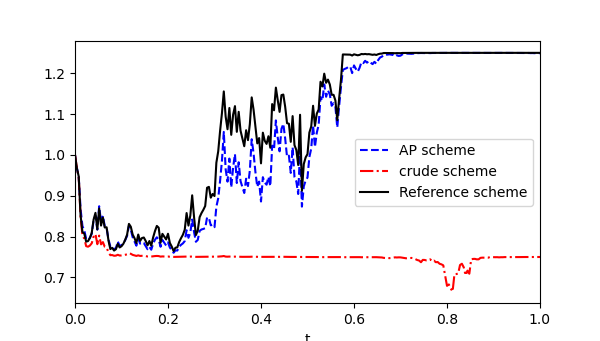}
    \caption{Evolution of the AP scheme~\eqref{eq:scheme-AP-diff-ex1}, the crude scheme~\eqref{eq:scheme-nonAP-diff} and the reference scheme~\eqref{eq:scheme-ref-diff-ex1} (diffusion approximation regime, first example), with $\Delta t = 0.004$ and $\epsilon = 0.01$.\label{fig:Diff1_AP-nonAP-ref}}
\end{figure}

\subsubsection{First example with $\sigma(x) = x$}

In this section, we illustrate the performance of the AP scheme presented in Remark~\ref{rem:ex1bis}, and an important feature of all the AP schemes presented in this article, concerning the consistency of quadrature rules for discretizations of the fast component.

As explained in Remark~\ref{rem:ex1bis}, when $\sigma(x)=x$, where $x \in \R$ belongs to the real line instead of imposing periodic conditions, another type of AP scheme~\eqref{eq:scheme-AP-ex1bis} can be designed. The limiting equation is $dX_t = X_t \circ dW_t$ or, with an It\^o convention, $dX_t = \frac 1 2 X_t dt + X_t dW_t$, and the Euler-Maruyama scheme (used as a reference scheme) for this limiting equation is written as
\begin{equation}\label{eq:scheme-ref-diff-ex1bis}
    X_{n+1}^{\rm ref} = X_n^{\rm ref} + \frac 1 2 X_n^{\rm ref} \Delta t + X_n^{\rm ref} \sqrt{\Delta t} \gamma_n.
\end{equation}

Recall that in~\eqref{eq:scheme-AP-ex1bis}, the quadrature rule used to discretize the integral in the exponential is closely related to the choice of the scheme for the discretization of the fast component. Let us introduce the following scheme where  the consistency is not satisfied (scheme~\eqref{eq:scheme-AP-ex1bis} corresponds to $\theta = \theta'$ below):
\begin{equation} \label{eq:scheme-nonAP-diff-ex1bis}
    \left\lbrace
    \begin{aligned}
        X_{n+1}^\epsilon&=X_n^\epsilon \exp\bigl(\frac{\Delta t}{\epsilon}[(1-\theta)m_n^\epsilon+\theta m_{n+1}^\epsilon]\bigr)\\
        m_{n+1}^\epsilon&=m_n^\epsilon-\frac{\Delta t}{\epsilon^2}\bigl[(1-\theta')m_n^\epsilon+\theta' m_{n+1}^\epsilon\bigr]+\frac{\sqrt{\Delta t}}{\epsilon}\gamma_n,
    \end{aligned}
    \right.
\end{equation}
with $\theta, \theta' \in [\frac 1 2, 1]$.

In Figure~\ref{fig:Diff1x_AP-APgeneral-nonAP-ref}, we represent the evolution of $X_{n}^\epsilon$ and $X_{n}^{\rm ref}$ as time evolves, with $\Delta t = 0.004$ and $\epsilon = 0.01$. In Figure~\ref{fig:Diff1x_AP-APgeneral-ref} $X_n^\epsilon$ is computed either the specific AP scheme~\eqref{eq:scheme-AP-ex1bis} or the general AP scheme~\eqref{eq:scheme-AP-diff-general}, while in Figure~\ref{fig:Diff1x_nonAP-ref}, it is computed using the scheme~\eqref{eq:scheme-nonAP-diff-ex1bis} above with $\theta = 1 \neq \theta' = 0.5$. It illustrates the AP property of both schemes~\eqref{eq:scheme-AP-ex1bis} and~\eqref{eq:scheme-AP-diff-ex1} and the non convergence when the quadrature rules are not chosen consistently.

\begin{figure}[htbp]
    \subfigure[AP schemes~\eqref{eq:scheme-AP-ex1bis} and~\eqref{eq:scheme-AP-diff-ex1}\label{fig:Diff1x_AP-APgeneral-ref}]{
        \includegraphics[width=0.45\textwidth]{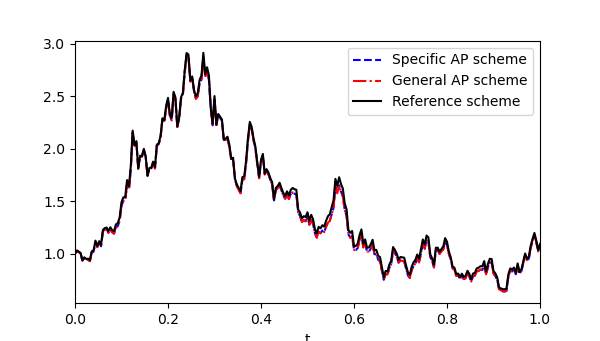}
    }
    \subfigure[Scheme~\eqref{eq:scheme-nonAP-diff-ex1bis} with $\theta = 1 \neq \theta' = 0.5$\label{fig:Diff1x_nonAP-ref}]{
        \includegraphics[width=0.45\textwidth]{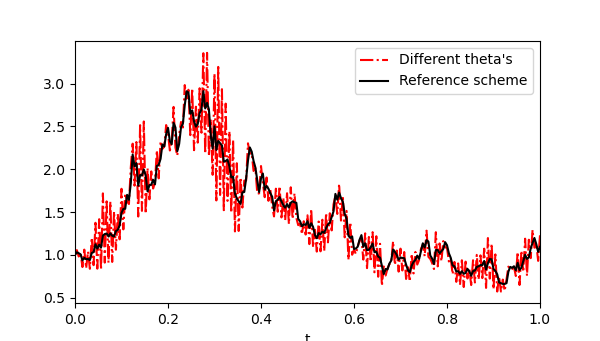}
    }
    \caption{Evolution of the AP schemes~~\eqref{eq:scheme-AP-ex1bis} and~\eqref{eq:scheme-AP-diff-ex1} (left), the crude scheme~\eqref{eq:scheme-nonAP-diff-ex1bis} with $\theta\neq \theta'$ (right), and the reference scheme~\eqref{eq:scheme-ref-diff-ex1bis} (diffusion approximation regime, first example with $\sigma(x) = x$), $\Delta t = 0.004$ and $\epsilon = 0.01$.\label{fig:Diff1x_AP-APgeneral-nonAP-ref}}
\end{figure}

\subsubsection{Second example}

Let us now consider the second example described in Section~\ref{sec:models-diff-ex}, see Equation~\eqref{eq:SDE-diff-ex2}. The coefficients are given by $f(x)=\cos(2\pi x)+1.5$, $g(x) = 0$ and $h(x) = 1$. Let $T=1$, $x_0^\epsilon=1$ and $m_0^\epsilon=0$.

The general case~\eqref{eq:scheme-AP-diff-general} gives in this case the AP scheme~\eqref{eq:scheme-AP-diff-ex2} and the limiting scheme~\eqref{eq:scheme-limit-diff-ex2}, whereas the limiting equation is given by~\eqref{eq:SDE-limit-diff-ex2}. The reference scheme is obtained by using the standard Euler-Maruyama scheme applied to the limiting equation:
\begin{equation}\label{eq:scheme-ref-diff-ex2}
    X_{n+1}^{\rm ref} = X_n^{\rm ref}  - \Delta t\frac{h(X_n^{\rm ref})^2 f'(X_n^{\rm ref})}{2 f(X_n^{\rm ref})}   +  \sqrt{\Delta t} \gamma_n.
\end{equation}

We represent in Figure~\ref{fig:Diff2_AP-nonAP-ref} the evolution of $X_{n}^\epsilon$ and $X_{n}^{\rm ref}$ as time evolves, with $\Delta t = 0.004$ and $\epsilon = 0.01$, where $X_n^\epsilon$ is computed using the AP scheme~\eqref{eq:scheme-AP-diff-ex2} (left) and the crude scheme~\eqref{eq:scheme-nonAP-diff} (right). Observe that the AP scheme captures the correct limiting equation when $\epsilon \to 0$, whereas the crude scheme does not.

\begin{figure}[htbp]
    \centering
    \includegraphics[width=0.5\textwidth]{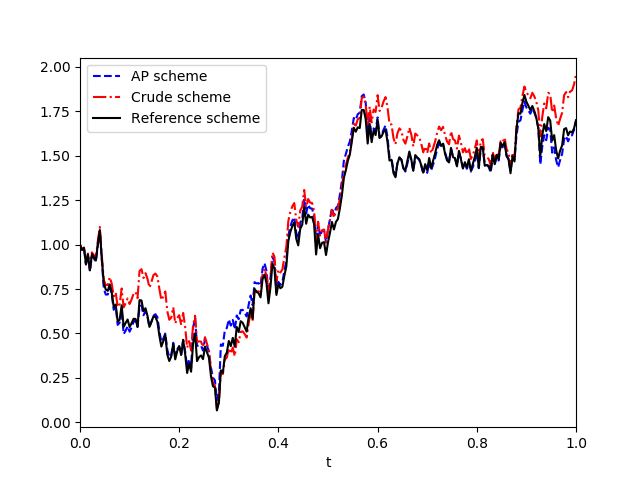}
    \caption{Evolution of the AP scheme~\eqref{eq:scheme-AP-diff-ex2}, the crude scheme~\eqref{eq:scheme-nonAP-diff}, and the reference scheme~\eqref{eq:scheme-ref-diff-ex2} for the second example~\eqref{eq:SDE-diff-ex2} (diffusion approximation regime, second example), with $\Delta t = 0.004$ and $\epsilon = 0.01$.\label{fig:Diff2_AP-nonAP-ref}}
\end{figure}

\section{Proof of Theorem~\ref{th:UA-av}}\label{sec:error}

The objective of this section is to prove the error estimate~\eqref{eq:UA-1}. The proof follows from proving the following four auxiliary lemmas. In their statements, let Assumptions~\ref{ass:init},~\ref{ass:coeffs-av} and~\ref{ass:av} be satisfied. Let $T\in(0,\infty)$ be fixed and assume that $\varphi:\T^d\to\R$ is of class $\mathcal{C}^4$. Recall that the identity $T=N\Delta t$ is assumed to hold. In addition, recall that $\bigl(X(t)\bigr)_{t\ge 0}$ and $\bigl(X_n\bigr)_{n\ge 0}$ are defined by the limiting equation~\eqref{eq:limit_SDE-av} and the limiting scheme~\eqref{eq:scheme-limit-av} respectively.
\begin{lemma}\label{lem1}
There exists $C(T,\varphi)\in(0,\infty)$ such that for all $\Delta t\in(0,\Delta t_0]$ and $\epsilon\in(0,1]$ one has
\begin{equation}\label{eq:lem1}
\big|\E[\varphi(X_N^\epsilon)]-\E[\varphi(X^\epsilon(T))]\big|\le C(T,\varphi)\frac{\Delta t}{\epsilon}.
\end{equation}
\end{lemma}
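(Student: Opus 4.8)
The plan is to prove \eqref{eq:lem1} by the standard weak-error technique based on the backward Kolmogorov equation, while carefully tracking the dependence on $\epsilon$ of every constant. First, one may assume $\Delta t\le\epsilon$: since $\varphi$ is bounded on the torus, $\abs{\E[\varphi(X_N^\epsilon)]-\E[\varphi(X^\epsilon(T))]}\le 2\norm{\varphi}_\infty\le 2\norm{\varphi}_\infty\frac{\Delta t}{\epsilon}$ whenever $\Delta t\ge\epsilon$, so \eqref{eq:lem1} is trivial in that regime. For $\epsilon\in(0,1]$ fixed, introduce
\[
u^\epsilon(t,x,m)=\E\big[\varphi(X^\epsilon(T))\,\big|\,X^\epsilon(t)=x,\ m^\epsilon(t)=m\big],\qquad (t,x,m)\in[0,T]\times\T^d\times\R,
\]
which solves $\partial_t u^\epsilon+\L^\epsilon u^\epsilon=0$ on $[0,T)$ with terminal condition $u^\epsilon(T,\cdot,\cdot)=\varphi$, where $\L^\epsilon$ is given by \eqref{eq:L^epsilon-av}.

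The key preliminary step is a family of regularity estimates on $u^\epsilon$: under Assumption~\ref{ass:coeffs-av} there exist $C(T,\varphi)\in(0,\infty)$ and an integer $p$ such that, for all multi-indices $\alpha$ and integers $k\ge 0$ with $\abs{\alpha}+k\le 4$,
\[
\sup_{0\le t\le T}\ \sup_{(x,m)\in\T^d\times\R}\ \frac{\abs{\partial_x^\alpha\partial_m^k u^\epsilon(t,x,m)}}{1+\abs{m}^p}\ \le\ C(T,\varphi)\,\epsilon^{\min(k,1)},\qquad \epsilon\in(0,1].
\]
In words: spatial derivatives of $u^\epsilon$ are bounded uniformly in $\epsilon$, whereas \emph{each} differentiation in the fast variable $m$ produces one power of $\epsilon$. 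These bounds are obtained by differentiating the SDE system \eqref{eq:SDE-av} with respect to its initial conditions and applying Gronwall's lemma: the variational process $\partial_m m^\epsilon$ is damped at rate $1/\epsilon$, hence feeds only a contribution of size ${\rm O}(\epsilon)$ into $\partial_m X^\epsilon$, and by induction into $\partial_m^k X^\epsilon$, and thus into $\partial_m^k u^\epsilon$; the boundedness of $b,\sigma$ and of their derivatives, the $\C^1$ regularity of $h$, and the moment bound \eqref{eq:moment_m-eps-t} are used to close the estimates and to control the growth in $m$.

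Then write the error as a telescoping sum: since $u^\epsilon(T,\cdot,\cdot)=\varphi$ and $u^\epsilon(0,x_0^\epsilon,m_0^\epsilon)=\E[\varphi(X^\epsilon(T))]$,
\[
\E[\varphi(X_N^\epsilon)]-\E[\varphi(X^\epsilon(T))]=\sum_{n=0}^{N-1}\Big(\E\big[u^\epsilon(t_{n+1},X_{n+1}^\epsilon,m_{n+1}^\epsilon)\big]-\E\big[u^\epsilon(t_n,X_n^\epsilon,m_n^\epsilon)\big]\Big).
\]
Conditioning on $(X_n^\epsilon,m_n^\epsilon)$ and using the flow property of the exact process, the $n$-th summand equals the one-step weak error of the integrator $\Phi_{\Delta t}^\epsilon$ against the exact transition, applied to the test function $\psi=u^\epsilon(t_{n+1},\cdot,\cdot)$. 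One Taylor-expands both $\E[\psi(\Phi_{\Delta t}^\epsilon(x,m,\Gamma,\gamma))]$ and the exact one-step expectation around $(x,m)$: their common leading part is $\psi(x,m)+\Delta t\,\L^\epsilon\psi(x,m)$, because the fast component is discretized \emph{exactly} in distribution — so the conditional moments of $m_{n+1}^\epsilon-m_n^\epsilon$ reproduce those of the Ornstein--Uhlenbeck transition to the required order — and $\Phi_{\Delta t}^\epsilon$ is consistent with $\L^\epsilon$. The remaining terms are quadratic and of higher order in the increments; using $\E[\abs{m_{n+1}^\epsilon-m_n^\epsilon}^2\mid X_n^\epsilon,m_n^\epsilon]={\rm O}(\Delta t/\epsilon)$, the independence of $\Gamma_n$ and $\gamma_n$, Gaussian integration by parts to exploit the vanishing of the odd Gaussian moments, the $\epsilon$-gain of the $m$-derivatives of $\psi$ from the regularity estimate, the moment bound \eqref{eq:moment_m-eps-n}, and $\Delta t\le\epsilon$ to absorb the $(\Delta t/\epsilon)^j$ terms with $j\ge 2$, one gets that the $n$-th local error is ${\rm O}(\Delta t^2/\epsilon)$, uniformly in $n$. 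Summing the $N=T/\Delta t$ contributions yields \eqref{eq:lem1}.

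The main obstacle is the regularity step: proving that the spatial derivatives of $u^\epsilon$ up to order four are bounded uniformly in $\epsilon$ while each $m$-differentiation gains a factor $\epsilon$, with weights in $m$ compatible with the available moment bounds. Once these estimates are available, the telescoping identity, the Taylor expansions, and the bookkeeping of the powers of $\Delta t$ and $\epsilon$ in the one-step error are routine.
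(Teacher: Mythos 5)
Your overall strategy (backward Kolmogorov equation, telescoping sum, one-step weak error of size $\Delta t^2/\epsilon$) is the same as the paper's, and the reduction to the case $\Delta t\le\epsilon$ is a legitimate simplification. The gap is in the regularity estimate you yourself single out as the key preliminary step, namely $\abs{\partial_x^\alpha\partial_m^k u^\epsilon}\lesssim\epsilon^{\min(k,1)}$. The proof you sketch (differentiate the SDE with respect to the initial data and apply Gronwall) does not deliver the factor $\epsilon$. Writing $\partial_m u^\epsilon(t,x,m)\,k_m=\E[D\varphi(X^\epsilon(T-t))\cdot\eta_x(T-t)]$ with $\eta_x(0)=0$ and $\eta_m(0)=k_m$, the fast variation $\eta_m(s)=e^{-s/\epsilon}k_m+\cdots$ enters $\eta_x$ both through the drift, $\int_0^t\partial_m b\,\eta_m\,ds={\rm O}(\epsilon k_m)$, and through the martingale term $\int_0^t\partial_m\sigma\,\eta_m\,dB_s$, whose second moment is of order $k_m^2\int_0^te^{-2s/\epsilon}ds={\rm O}(\epsilon k_m^2)$. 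Gronwall therefore gives only $\E[\norm{\eta_x(t)}^2]\lesssim\epsilon\,k_m^2$, i.e. $\abs{\partial_mu^\epsilon}\lesssim\sqrt\epsilon$, not $\epsilon$. Upgrading this to ${\rm O}(\epsilon)$ (which is what the formal expansion $u^\epsilon=u+\epsilon u_1+\cdots$ predicts) requires a genuinely different argument, e.g. an expansion of $u^\epsilon$ via Poisson equations in the spirit of the proof of Lemma~\ref{lem2}, and the mixed higher-order derivatives are harder still. Since your bookkeeping of the $(\Delta t/\epsilon)^j$ terms explicitly invokes this $\epsilon$-gain, the estimate cannot simply be asserted: as written it is the hardest and least substantiated part of the argument.

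The paper's proof shows the estimate is also unnecessary. Lemma~\ref{lem:Kolmo1} provides only $\epsilon$-uniform bounds on the derivatives of $u^\epsilon$ (no gain in $m$), and the proof of Lemma~\ref{lem1} interpolates the scheme by a continuous-time process $(\tilde X^\epsilon,\tilde m^\epsilon)$ and applies It\^o's formula, so that no Taylor expansion in the large increment $m_{n+1}^\epsilon-m_n^\epsilon$ is ever performed. The error then reduces to (i) coefficient differences such as $b(X_n^\epsilon,\tilde m^\epsilon(t))-b(\tilde X^\epsilon(t),\tilde m^\epsilon(t))$, which after conditioning and a further application of It\^o's formula contribute ${\rm O}(t-t_n)$, and (ii) an interpolation remainder carrying the factor $\frac1\epsilon$, of size $\frac{t-t_n}{\epsilon}$; integrating over each step and summing gives $\frac{\Delta t}{\epsilon}$ using only uniform derivative bounds. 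I would recommend either reorganizing your one-step expansion along these lines, or supplying a genuine proof of the $\epsilon$-gain. A smaller point in the same direction: the numerical and exact transitions of $m$ do not coincide in distribution when $h$ is non-constant, since the exact diffusion coefficient $h(X_t^\epsilon)$ moves within the step while the scheme freezes it at $h(X_n^\epsilon)$; the resulting term $\frac1\epsilon\bigl(h(X_n^\epsilon)^2-h(\tilde X^\epsilon(t))^2\bigr)\partial_m^2u^\epsilon$ must be estimated explicitly, and again only a uniform bound on $\partial_m^2u^\epsilon$ is needed for it.
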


\begin{lemma}\label{lem2}
There exists $C(T,\varphi)\in(0,\infty)$ such that for all $\epsilon\in(0,1]$ one has
\begin{equation}\label{eq:lem2}
\big|\E[\varphi(X(T))]-\E[\varphi(X^\epsilon(T))]\big|\le C(T,\varphi)\epsilon.
\end{equation}
\end{lemma}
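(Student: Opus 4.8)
The plan is to combine the perturbed test function method with the backward Kolmogorov equation of the limiting SDE~\eqref{eq:limit_SDE-av}. Introduce $u\colon[0,T]\times\T^d\to\R$ solving $\partial_t u+\L u=0$ on $[0,T)\times\T^d$ with $u(T,\cdot)=\varphi$, where $\L$ is the limiting generator~\eqref{eq:limit_generator-av}. Since $\varphi\in\C^4(\T^d)$ and $\overline{b},\overline{\sigma}$ are of class $\C^4$ (Assumptions~\ref{ass:coeffs-av}--\ref{ass:av}) and the torus is compact, parabolic regularity gives $u\in C^1\bigl([0,T];\C^2(\T^d)\bigr)\cap C\bigl([0,T];\C^4(\T^d)\bigr)$ with $\sup_{t\in[0,T]}\|u(t,\cdot)\|_{\C^4}\le C(T,\varphi)$, and the Feynman--Kac representation $u(t,x)=\E[\varphi(X^{t,x}(T))]$ holds, where $X^{t,x}$ solves~\eqref{eq:limit_SDE-av} from $x$ at time $t$; in particular $u(0,x_0)=\E[\varphi(X(T))]$.

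The next step is to apply, for each $t$, the perturbed test function construction of Appendix~\ref{sec:app-av} to $u(t,\cdot)$ in place of $\varphi$: let $\varphi_1(t,x,\cdot)$ be the centred solution in the fast variable of the Poisson equation $\L_{OU}\varphi_1(t,x,\cdot)=\bigl(\L-\L_0\bigr)u(t,x)$, which is solvable because the right-hand side is centred with respect to $\nu^x$ --- this is exactly the identity $\int\bigl(\L_0 u(t,\cdot)\bigr)(x,m)\,d\nu^x(m)=\L u(t,x)$ encoding the definition~\eqref{eq:averagedcoeffs1}--\eqref{eq:averagedcoeffs2} of the averaged coefficients. Setting $u^\epsilon(t,x,m)=u(t,x)+\epsilon\varphi_1(t,x,m)$ and using $\L_{OU}u=0$ and $\partial_t u=-\L u$, one obtains the identity
\begin{equation*}
\bigl(\partial_t+\L^\epsilon\bigr)u^\epsilon=\epsilon\bigl(\partial_t\varphi_1+\L_0\varphi_1\bigr)=:\epsilon R^\epsilon .
\end{equation*}
Differentiating the Poisson equation in $t$ and using $\partial_t u=-\L u$ again, $\partial_t\varphi_1$ is (up to sign) $\L_{OU}^{-1}(\L-\L_0)\L u$; since $\L u$ is second order in $u$ and $(\L-\L_0)$ costs two further $x$-derivatives, this is where the $\C^4$ assumptions are used. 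Ergodicity of the Ornstein--Uhlenbeck process together with Assumption~\ref{ass:coeffs-av} then yields the uniform bound $\sup_{\epsilon\in(0,1],\,t\in[0,T]}|R^\epsilon(t,x,m)|\le C(T,\varphi)(1+|m|)$, and similarly $\varphi_1$ has at most linear growth in $m$ uniformly in $t$ and $\epsilon$.

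Finally I would apply It\^o's formula to $t\mapsto u^\epsilon\bigl(t,X^\epsilon(t),m^\epsilon(t)\bigr)$ --- valid as $u^\epsilon$ is $C^{1,2}$ and the stochastic integral is a genuine martingale thanks to the boundedness of $\sigma$ and $h$ and the controlled $m$-growth of the derivatives of $u^\epsilon$ --- and take expectations, getting
\begin{equation*}
\E\bigl[u^\epsilon(T,X^\epsilon(T),m^\epsilon(T))\bigr]-u^\epsilon(0,x_0^\epsilon,m_0^\epsilon)=\epsilon\int_0^T\E\bigl[R^\epsilon(t,X^\epsilon(t),m^\epsilon(t))\bigr]\,dt .
\end{equation*}
By the uniform second-moment estimate~\eqref{eq:moment_m-eps-t} and $\sup_\epsilon|m_0^\epsilon|<\infty$ (Assumption~\ref{ass:init}), one has $\sup_{t\in[0,T]}\E[|m^\epsilon(t)|]\le C$ uniformly in $\epsilon$, so the right-hand side is ${\rm O}(\epsilon)$; since $u^\epsilon(T,x,m)=\varphi(x)+\epsilon\varphi_1(T,x,m)$ and $u^\epsilon(0,x_0^\epsilon,m_0^\epsilon)=u(0,x_0^\epsilon)+\epsilon\varphi_1(0,x_0^\epsilon,m_0^\epsilon)$, the same moment control gives $\bigl|\E[\varphi(X^\epsilon(T))]-u(0,x_0^\epsilon)\bigr|\le C(T,\varphi)\epsilon$. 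Replacing $u(0,x_0^\epsilon)$ by $u(0,x_0)=\E[\varphi(X(T))]$ at the cost of $\|\nabla u(0,\cdot)\|_\infty\,|x_0^\epsilon-x_0|$ (absorbed when the initial conditions coincide, the case relevant to Theorem~\ref{th:UA-av}) yields~\eqref{eq:lem2}. The only genuinely technical point is the second paragraph: establishing that $\varphi_1$ and $\partial_t\varphi_1$ are globally defined with $\C^2$, uniform-in-$t$ dependence on $x$ and at most linear growth in $m$, which is precisely what pins down the $\C^4$ regularity requirements and makes $R^\epsilon$ integrable against the law of $m^\epsilon$; the parabolic regularity of $u$, the It\^o computation and the moment bound are routine.
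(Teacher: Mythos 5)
Your proof is correct and follows essentially the same route as the paper's: both rest on the solution $u$ of the limiting Kolmogorov equation, a corrector solving a Poisson equation for $\L_{OU}^x$ with the centering supplied by the definition of $\overline{b}$ and $\overline{\sigma}\,\overline{\sigma}^\star$, linear-in-$m$ growth bounds from the exponential mixing of the frozen Ornstein--Uhlenbeck process, It\^o's formula, and the uniform moment bound~\eqref{eq:moment_m-eps-t}. The only difference is organizational --- you apply It\^o once to the perturbed function $u+\epsilon\varphi_1$, whereas the paper applies it separately to $u$ and to the corrector $\Phi$ and combines the two identities --- and your explicit treatment of the $x_0^\epsilon$ versus $x_0$ discrepancy is, if anything, more careful than the paper's.
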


\begin{lemma}\label{lem3}
There exists $C(T,\varphi)\in(0,\infty)$ such that for all $\Delta t\in(0,\Delta t_0]$ one has
\begin{equation}\label{eq:lem3}
\big|\E[\varphi(X_N)]-\E[\varphi(X(T))]\big|\le C(T,\varphi)\Delta t.
\end{equation}
\end{lemma}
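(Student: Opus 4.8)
The plan is to recognize that Lemma~\ref{lem3} is a pure weak-error estimate for a one-step scheme applied to the (well-posed, smooth-coefficient) limiting SDE~\eqref{eq:limit_SDE-av}, and hence that it follows from the classical Talay--Tubaro / Milstein argument based on the Kolmogorov backward equation. Concretely, introduce $u(t,x)=\E[\varphi(X(t))\mid X(0)=x]$, which under Assumptions~\ref{ass:coeffs-av}--\ref{ass:av} (so that $\overline b,\overline\sigma\in\mathcal C^4$ and are bounded with bounded derivatives) solves $\partial_t u=\L u$ with $u(0,\cdot)=\varphi$, and is of class $\mathcal C^4$ in $x$ with bounds uniform on $[0,T]$ by standard regularity theory for Kolmogorov equations on the torus. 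Then write the telescoping identity
\begin{equation*}
\E[\varphi(X_N)]-\E[\varphi(X(T))]=\sum_{n=0}^{N-1}\Bigl(\E[u(T-t_{n+1},X_{n+1})]-\E[u(T-t_{n+1},\Phi_{\Delta t}(X_n,\Gamma_n,\gamma_n))\text{ replaced by }u(T-t_n,X_n)]\Bigr),
\end{equation*}
i.e. the usual decomposition $\E[u(0,X_N)]-\E[u(T,X_0)]=\sum_{n}\bigl(\E[u(T-t_{n+1},X_{n+1})]-\E[u(T-t_n,X_n)]\bigr)$, and estimate each summand as a local weak error of size $\mathrm O(\Delta t^2)$, which summed over $N=T/\Delta t$ steps gives the $\mathrm O(\Delta t)$ bound.

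The key step is the local error estimate: for fixed smooth $\psi$ (playing the role of $u(T-t_{n+1},\cdot)$ with $x$-derivatives up to order $4$ bounded uniformly in $n$) one shows
\begin{equation*}
\bigl|\E[\psi(\Phi_{\Delta t}(x,\Gamma,\gamma))]-\psi(x)-\Delta t\,\L\psi(x)\bigr|\le C(1+\cdots)\Delta t^2,
\end{equation*}
and combines it with the exact one-step relation $\E[\psi(X(t_{n+1}))\mid X(t_n)=x]-\psi(x)=\Delta t\,\L\psi(x)+\mathrm O(\Delta t^2)$ (Taylor expansion of $t\mapsto P_t\psi$ together with boundedness of $\L\psi,\L^2\psi$). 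For the scheme~\eqref{eq:scheme-limit-av}, the one-step map is $X_{n+1}=X_n+\Delta t\,b(X_n,h(X_n)\gamma_n)+\sqrt{\Delta t}\,\sigma(X_n,h(X_n)\gamma_n)\Gamma_n$ with $\Gamma_n,\gamma_n$ independent centered Gaussians; one Taylor-expands $\psi$ to order $4$ in the increment, takes expectation over $(\Gamma,\gamma)$, uses that odd moments of $\Gamma$ vanish, and invokes the fundamental identities $\E[b(x,h(x)\gamma)]=\overline b(x)$ and $\E[\sigma\sigma^\star(x,h(x)\gamma)]=\overline a(x)=\overline\sigma\,\overline\sigma^\star(x)$ (i.e.~\eqref{eq:conditioning-av} and its matrix analogue, which are exactly the identities already exploited in the proof of Theorem~\ref{th:AP-av}) to see that the $\mathrm O(\Delta t)$ term is precisely $\Delta t\,\L\psi(x)$ with $\L$ given by~\eqref{eq:limit_generator-av}; the remaining terms are $\mathrm O(\Delta t^2)$, controlled using boundedness of $b,\sigma,h$ and their derivatives and finiteness of all Gaussian moments. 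A minor bookkeeping point is that one also needs $\E[\psi(X_{n+1})\mid X_n=x]=\psi(x)+\mathrm O(\Delta t)$ together with $u\in\mathcal C^{1}$ in $t$, so that replacing $u(T-t_{n+1},\cdot)$ by $u(T-t_n,\cdot)$ in the telescoping sum costs only $\mathrm O(\Delta t^2)$ per step; this is immediate from $\partial_t u=\L u$ and the bounds on $u$.

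I do not expect a genuine obstacle here: everything reduces to standard weak-error analysis, and all the ingredients (smoothness and boundedness of the averaged coefficients, regularity of $u$ on the torus, the averaging identities) are available from Assumptions~\ref{ass:coeffs-av}--\ref{ass:av} and Proposition~\ref{propo:limit-SDE-av}. The only place requiring mild care is ensuring the constants in the local error are uniform in $n$ (hence the need for $[0,T]$-uniform bounds on the spatial derivatives of $u$ up to order $4$, which is why $\varphi\in\mathcal C^4$ is assumed) and, since $X_n$ lives on the compact torus $\T^d$, no moment bounds on the state are needed — the Gaussian moments of $\Gamma_n,\gamma_n$ suffice. Summing $N=T/\Delta t$ local errors of size $C\Delta t^2$ yields $\bigl|\E[\varphi(X_N)]-\E[\varphi(X(T))]\bigr|\le C(T,\varphi)\Delta t$, which is~\eqref{eq:lem3}.
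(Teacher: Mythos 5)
Your proposal is correct and follows essentially the same strategy as the paper: the Kolmogorov backward equation for the limiting SDE (whose $\C^4$ spatial regularity is supplied by Lemma~\ref{lem:Kolmo2}), a telescoping sum over the time steps, and the key averaging identities $\E[b(x,h(x)\gamma)]=\overline b(x)$ and $\E[\sigma\sigma^\star(x,h(x)\gamma)]=\overline\sigma\,\overline\sigma^\star(x)$, which make the $\mathrm{O}(\Delta t)$ part of the local error coincide exactly with $\Delta t\,\L u$. The only difference is one of implementation: the paper organizes the local error via a continuous-time interpolant $\tilde X$ of the scheme and It\^o's formula, splitting each summand into four terms $\delta_n^1,\dots,\delta_n^4$ with $\delta_n^1=0$ being precisely your averaging identity, rather than a direct fourth-order Taylor expansion of the one-step map, but the two computations are equivalent.
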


\begin{lemma}\label{lem4}
There exists $C(T,\varphi)\in(0,\infty)$ such that for all $\Delta t\in(0,\Delta t_0]$ and $\epsilon\in(0,1]$ one has
\begin{equation}\label{eq:lem4}
\big|\E[\varphi(X_N^\epsilon)]-\E[\varphi(X_N)]\big|\le C(T,\varphi)\max(\Delta t,\epsilon).
\end{equation}
\end{lemma}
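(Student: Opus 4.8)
\textbf{Proof plan for Lemma~\ref{lem4}.}

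The plan is to compare the two Markov chains by a telescoping argument based on the backward semigroup of the \emph{limiting} scheme~\eqref{eq:scheme-limit-av}, corrected by a perturbed test function that absorbs the non-equilibration of the fast Ornstein--Uhlenbeck variable $m_n^\epsilon$. Let $\psi_n(x)=\E[\varphi(X_N)\mid X_n=x]$, so that $\psi_N=\varphi$, $\E[\varphi(X_N)]=\psi_0(x_0)$, and $\psi_n$ is obtained from $\psi_{n+1}$ by one step of the limiting integrator. A first, standard step is to prove the uniform regularity $\sup_{0\le n\le N}\norm{\psi_n}_{\C^4}\le C(T,\varphi)$ for all $\Delta t\in(0,\Delta t_0]$, using Assumptions~\ref{ass:coeffs-av} and~\ref{ass:av} and the fact that $\overline b,\overline\sigma\in\C^4$. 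Writing
\[
\E[\varphi(X_N^\epsilon)]-\E[\varphi(X_N)]=\bigl(\psi_0(x_0^\epsilon)-\psi_0(x_0)\bigr)+\sum_{n=0}^{N-1}\E\bigl[\E[\psi_{n+1}(X_{n+1}^\epsilon)\mid\mathcal{F}_n]-\psi_n(X_n^\epsilon)\bigr],
\]
a third-order Taylor expansion of $\psi_{n+1}$ along the increment $X_{n+1}^\epsilon-X_n^\epsilon$ (licit since $b,\sigma$ are bounded, so all remainders are $\mathrm{O}(\Delta t^2\norm{\psi_{n+1}}_{\C^4})$) shows that the $n$-th summand equals, up to $\mathrm{O}(\Delta t^2)$, the quantity $\Delta t\,\E\bigl[(P_{\Delta t/\epsilon}^{X_n^\epsilon}G_{n+1}(X_n^\epsilon,\cdot))(m_n^\epsilon)-\nu^{X_n^\epsilon}(G_{n+1}(X_n^\epsilon,\cdot))\bigr]$, where $G_{n+1}(x,m)=\scal{b(x,m)}{\nabla\psi_{n+1}(x)}+\tfrac12(\sigma\sigma^\star)(x,m):\nabla^2\psi_{n+1}(x)$ and $P_t^x$ is the Ornstein--Uhlenbeck semigroup with frozen slow variable $x$ (invariant measure $\nu^x$). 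This is exactly the defect by which the one-step law of $m^\epsilon$ differs from equilibrium; by geometric ergodicity of $P_t^x$ (contraction rate $1$, uniform in $x$ since $h$ is bounded) and by~\eqref{eq:moment_m-eps-n}, its expectation is $\mathrm{O}(e^{-\Delta t/\epsilon})$ uniformly in $n$ and $\epsilon$.

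The difficulty is that $e^{-\Delta t/\epsilon}$ is \emph{not} small when $\epsilon$ and $\Delta t$ are comparable, so summing this defect naively over the $N\sim T/\Delta t$ steps gives only $\mathrm{O}(Te^{-\Delta t/\epsilon})$, which does not vanish uniformly in $\epsilon$; one must exploit cancellation across steps. This is done by the discrete corrector
\[
\chi_n(x,m)=\Delta t\sum_{j\ge 1}\Bigl[\bigl(P_{j\Delta t/\epsilon}^xG_n(x,\cdot)\bigr)(m)-\nu^x\bigl(G_n(x,\cdot)\bigr)\Bigr],
\]
a discrete analogue of the solution of the Ornstein--Uhlenbeck Poisson equation; equivalently, one tracks the mixed moments $a_n=\E[m_n^\epsilon g_n(X_n^\epsilon)]$ and $c_n=\E[(|m_n^\epsilon|^2-h(X_n^\epsilon)^2)\tilde g_n(X_n^\epsilon)]$, which obey recursions $a_{n+1}=e^{-\Delta t/\epsilon}a_n+\mathrm{O}(\Delta t)$, $c_{n+1}=e^{-2\Delta t/\epsilon}c_n+\mathrm{O}(\Delta t)$ (using $\E[m_{n+1}^\epsilon\mid\mathcal{F}_n]=e^{-\Delta t/\epsilon}m_n^\epsilon$, $\E[|m_{n+1}^\epsilon|^2\mid\mathcal{F}_n]=e^{-2\Delta t/\epsilon}|m_n^\epsilon|^2+(1-e^{-2\Delta t/\epsilon})h(X_n^\epsilon)^2$, and the smoothness of $b,\sigma,h,\psi_n$). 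Then $\chi_n$ and its spatial derivatives, and $|a_n|,|c_n|$, are bounded up to a geometrically small transient by $C(T,\varphi)\tfrac{\Delta t}{1-e^{-\Delta t/\epsilon}}(1+|m|)$, hence by $C(T,\varphi)(\Delta t+\epsilon)(1+|m|)$ thanks to the elementary inequality $1-e^{-\Delta t/\epsilon}\ge\tfrac{\Delta t}{\Delta t+\epsilon}$ (and $e^{-\Delta t/\epsilon}\le 1$). Replacing $\psi_n$ by $\psi_n+\chi_n$ in the telescoping cancels the $\mathrm{O}(\Delta t)$ defect step by step (the $\sqrt{\Delta t}$-martingale part contributing nothing in conditional mean) and leaves an $\mathrm{O}(\Delta t^2)$ error per step, plus the boundary term $\E[\chi_N(X_N^\epsilon,m_N^\epsilon)]-\chi_0(x_0^\epsilon,m_0^\epsilon)=\mathrm{O}(\Delta t+\epsilon)$, using again~\eqref{eq:moment_m-eps-n} and $\sup_\epsilon|m_0^\epsilon|<\infty$. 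Altogether
\[
\bigl|\E[\varphi(X_N^\epsilon)]-\E[\varphi(X_N)]\bigr|\le |\psi_0(x_0^\epsilon)-\psi_0(x_0)|+C(T,\varphi)N\Delta t^2+C(T,\varphi)(\Delta t+\epsilon)\le C(T,\varphi)\max(\Delta t,\epsilon),
\]
which is~\eqref{eq:lem4}, the initial-condition term $\psi_0(x_0^\epsilon)-\psi_0(x_0)$ being handled separately (and vanishing if $x_0^\epsilon=x_0$).

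The main obstacle is precisely this uniformity in $\epsilon$: the naive step-by-step estimate wastes the factor $N$, and the whole point of the corrector (resp. of the geometric recursions for $a_n,c_n$) is that the relevant accumulated quantity is $\Delta t\sum_{j\ge 1}e^{-j\Delta t/\epsilon}=\tfrac{\Delta t\,e^{-\Delta t/\epsilon}}{1-e^{-\Delta t/\epsilon}}$, which the inequality $\tfrac{\Delta t}{1-e^{-\Delta t/\epsilon}}\le\Delta t+\epsilon$ controls. A secondary, purely technical point is the propagation of regularity needed to justify all the Taylor expansions above at the stated order --- of $\psi_n$, of $\chi_n$, hence of $x\mapsto P_t^x$ and $x\mapsto\nu^x$ and so of $h$ --- which is where the $\C^4$ assumptions on $b,\sigma,\varphi$ enter; this part is analogous to the analysis behind Lemmas~\ref{lem1}--\ref{lem3}.
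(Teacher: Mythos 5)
Your plan follows essentially the same route as the paper's proof: a telescoping sum over the backward functions of the limiting scheme, a Taylor expansion isolating the per-step defect $\E[G(X_n^\epsilon,m_{n+1}^\epsilon)]-\E[\overline{G}(X_n^\epsilon)]$, and a discrete Ornstein--Uhlenbeck Poisson-equation corrector built from the geometric series of the exact one-step transition operator $P^x_{\Delta t/\epsilon}$, with the bound $\Delta t/(1-e^{-\Delta t/\epsilon})\le\Delta t+\epsilon$ yielding the $\max(\Delta t,\epsilon)$ rate and the boundary terms controlled via the moment bound~\eqref{eq:moment_m-eps-n}. The only minor imprecision is that the leftover per-step terms after inserting the corrector (the variations $\chi_{n+1}-\chi_n$ and $\chi_n(X_{n+1}^\epsilon,\cdot)-\chi_n(X_n^\epsilon,\cdot)$) are of size $\mathrm{O}(\Delta t(\Delta t+\epsilon))$ rather than $\mathrm{O}(\Delta t^2)$, but they still sum to $\mathrm{O}(\Delta t+\epsilon)$ as required, exactly as in the paper's accounting.
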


The first auxiliary result (Lemma~\ref{lem1}) states a weak error estimate for the numerical scheme~\eqref{eq:scheme-AP-av} for fixed $\epsilon\in(0,1]$. Due to the stiffness of the fast component $m_n^{\epsilon}$, the right-hand side is not uniform with respect to $\epsilon$, and it is natural to expect that the upper bound depends on $\tau=\frac{\Delta t}{\epsilon}$.

The second auxiliary result (Lemma~\ref{lem2}) gives an error estimate in the averaging principle (see~\eqref{eq:error-av} in Proposition~\ref{propo:limit-SDE-av}), in the weak sense. This is a standard result in the literature, see for instance~\cite{KhasminskiiYin:05} for an approach using asymptotic expansions for solutions of Kolmogorov equations. The strategy of the proof provided in Section~\ref{sec:app-av} is based on the introduction of the solutions of relevant Poisson equations, in the spirit of~\cite[Chapter~17]{PavliotisStuart} where strong convergence is studied, see~\cite{Brehier:20} and~\cite{RocknerSunXie:19} for the weak convergence case.

The two remaining auxiliary lemmas and their proofs are more original than the first two. Lemmas~\ref{lem3} and~\ref{lem4} are quantitative statements concerning two fundamental requirements in the notion of AP scheme (see Definition~\ref{def:AP}). On the one hand, Lemma~\ref{lem3} is a quantitative statement of the consistency of the limiting scheme~\eqref{eq:scheme-limit-av} with the limiting equation~\eqref{eq:limit_SDE-av}, since it provides a weak error when $\Delta t\to 0$. Since the scheme is not classical (it is not a standard Euler-Maruyama type method, in particular recall that the scheme is random even if $X(T)$ is deterministic, when $\sigma=0$), a proof is required. On the other hand, Lemma~\ref{lem4} is a quantitative statement about the convergence to the limiting scheme, for fixed $\Delta t\in(0,\Delta t_0]$ (see Assumption~\ref{ass:limiting_scheme}). In fact, the left-hand side of~\eqref{eq:lem4} goes to $0$ when $\epsilon\to 0$, however in the right-hand side of~\eqref{eq:lem4} an additional error term $\Delta t$ appears. Proving Lemma~\ref{lem4} is the most challenging step towards the proof of Theorem~\ref{th:UA-av}, whereas a key argument will be identified in the proof of Lemma~\ref{lem3} related to the consistency of the limiting scheme with the limiting equation.

The following auxiliary results concerning solutions of Kolmogorov equations are required in order do prove the four auxiliary results stated above.
\begin{lemma}\label{lem:Kolmo1}
Define $u^\epsilon(t,x)=\E_{x,m}[\varphi(X^\epsilon(t))]$, for all $t\ge 0$, $x\in\T^d$ and $m\in\R$, where $\bigl(X^\epsilon(t),m^\epsilon(t)\bigr)_{t\ge 0}$ is the solution of the SDE system~\eqref{eq:SDE-av}, and $\E_{x,m}$ means that $X^\epsilon(0)=x$ and $m^\epsilon(0)=m$. For all $\epsilon\in(0,1]$, one has $u^\epsilon\in\mathcal{C}([0,T],\mathcal{C}_b^3(\T^d\times\R,\R))$. In addition, there exists $C(T,\varphi)\in(0,\infty)$ such that for all $j\in\{1,2,3\}$, one has
\begin{equation}\label{eq:Kolmo1}
\underset{\epsilon\in(0,1]}\sup~\underset{(t,x,m)\in [0,T]\times\T^d\times\R}\sup~\|D_x^ju^\epsilon(t,x,m)\|\le C(T,\varphi).
\end{equation}
\end{lemma}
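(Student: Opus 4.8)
The plan is to exploit the probabilistic (stochastic flow) representation $u^\epsilon(t,x,m)=\E_{x,m}[\varphi(X^\epsilon(t))]$ and to control the derivatives in the slow variable through the variation processes of the system~\eqref{eq:SDE-av} with respect to the slow initial condition. For fixed $\epsilon$ the membership $u^\epsilon\in\mathcal{C}([0,T],\C_b^3(\T^d\times\R,\R))$ is standard (the coefficients are smooth enough for the flow to be $\C^3$ in $(x,m)$, and one checks directly that the variations have bounded moments on $[0,T]$); the content of~\eqref{eq:Kolmo1} is the \emph{uniformity in $\epsilon$} of the $x$-derivatives. Write $(X^\epsilon,m^\epsilon)$ for the solution started from $(x,m)$ and set $\eta^\epsilon_t = D_x X^\epsilon_t$, $\xi^\epsilon_t = D_x m^\epsilon_t$, together with the second-order variations $\zeta^\epsilon_t = D_x^2 X^\epsilon_t$, $\chi^\epsilon_t = D_x^2 m^\epsilon_t$ and the third-order ones $\kappa^\epsilon_t = D_x^3 X^\epsilon_t$, $\varpi^\epsilon_t = D_x^3 m^\epsilon_t$. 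Under Assumption~\ref{ass:coeffs-av} these solve the linear SDE systems obtained by formally differentiating~\eqref{eq:SDE-av} in $x$, and the chain rule gives
\begin{align*}
D_x u^\epsilon(t,x,m) &= \E[D\varphi(X^\epsilon_t)\,\eta^\epsilon_t],\\
D_x^2 u^\epsilon(t,x,m) &= \E[D^2\varphi(X^\epsilon_t)(\eta^\epsilon_t,\eta^\epsilon_t)] + \E[D\varphi(X^\epsilon_t)\,\zeta^\epsilon_t],
\end{align*}
and an analogous formula for $D_x^3 u^\epsilon$ pairing $D^3\varphi,D^2\varphi,D\varphi$ with $\eta^\epsilon_t$, $(\eta^\epsilon_t,\zeta^\epsilon_t)$ and $\kappa^\epsilon_t$. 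Since $\varphi\in\C^3$, the bound~\eqref{eq:Kolmo1} reduces to showing that $\sup_{0\le t\le T}\E[\norm{\eta^\epsilon_t}^{2p}]$, $\sup_{0\le t\le T}\E[\norm{\zeta^\epsilon_t}^{2p}]$ and $\sup_{0\le t\le T}\E[\norm{\kappa^\epsilon_t}^{2p}]$ are bounded uniformly in $\epsilon\in(0,1]$ and in $(x,m)\in\T^d\times\R$, for suitable exponents $p\ge1$; continuity of $t\mapsto u^\epsilon(t,\cdot)$ then follows from pathwise continuity of the flow and dominated convergence.

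First I would treat the first-order system, which has the form
\[
d\eta^\epsilon_t = [\partial_x b\,\eta^\epsilon_t + \partial_m b\,\xi^\epsilon_t]\,dt + [\partial_x\sigma\,\eta^\epsilon_t + \partial_m\sigma\,\xi^\epsilon_t]\,dB_t,\qquad \eta^\epsilon_0 = I,
\]
\[
d\xi^\epsilon_t = -\frac1\epsilon\,\xi^\epsilon_t\,dt + \frac{\sqrt2}{\sqrt\epsilon}\,\bigl(\nabla h(X^\epsilon_t)^\star\eta^\epsilon_t\bigr)\,d\beta_t,\qquad \xi^\epsilon_0 = 0,
\]
all coefficients evaluated at $(X^\epsilon_t,m^\epsilon_t)$ and bounded by Assumption~\ref{ass:coeffs-av}. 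The decisive observation is that the stiff terms in the $\xi^\epsilon$-equation have opposite signs, so $\xi^\epsilon$ is an Ornstein--Uhlenbeck--type process whose variance relaxes toward an ${\rm O}(1)$, rather than ${\rm O}(1/\epsilon)$, quantity: It\^o's formula and boundedness of $\nabla h$ give
\[
\frac{d}{dt}\E[|\xi^\epsilon_t|^2] = -\frac2\epsilon\E[|\xi^\epsilon_t|^2] + \frac2\epsilon\E[|\nabla h(X^\epsilon_t)^\star\eta^\epsilon_t|^2] \le -\frac2\epsilon\E[|\xi^\epsilon_t|^2] + \frac{C}{\epsilon}\E[\norm{\eta^\epsilon_t}^2],
\]
so that, since $\xi^\epsilon_0 = 0$, a comparison argument yields $\E[|\xi^\epsilon_t|^2]\le C\sup_{0\le s\le t}\E[\norm{\eta^\epsilon_s}^2]$ with $C$ \emph{independent of} $\epsilon$ --- the singular factors $1/\epsilon$ cancel. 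Inserting this into the It\^o estimate for $\norm{\eta^\epsilon_t}^2$ gives $\frac{d}{dt}\E[\norm{\eta^\epsilon_t}^2]\le C\E[\norm{\eta^\epsilon_t}^2] + C\sup_{0\le s\le t}\E[\norm{\eta^\epsilon_s}^2]$, whence $\sup_{0\le t\le T}\E[\norm{\eta^\epsilon_t}^2]\le C(T)$ by Gr\"onwall's lemma, uniformly in $\epsilon$ and $(x,m)$; the same computation applied to $\norm{\eta^\epsilon_t}^{2p}$ handles higher moments. In particular $\norm{D_x u^\epsilon(t)}_{\C^0_b}\le \norm{D\varphi}_{\C^0_b}\,(\E[\norm{\eta^\epsilon_t}^2])^{1/2}\le C(T,\varphi)$.

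The second- and third-order bounds follow by iterating this scheme. Differentiating the first-order system once more, $\zeta^\epsilon_t$ solves a linear SDE with the same bounded coefficients plus a forcing term quadratic in $(\eta^\epsilon_t,\xi^\epsilon_t)$, while $\chi^\epsilon_t$ again solves a damped equation $d\chi^\epsilon_t = -\frac1\epsilon\chi^\epsilon_t\,dt + \frac{\sqrt2}{\sqrt\epsilon}[\,\cdots\,]\,d\beta_t$ whose bracket is a polynomial in $\eta^\epsilon_t,\zeta^\epsilon_t$ with coefficients built from the derivatives of $b,\sigma,h$, bounded under Assumption~\ref{ass:coeffs-av}. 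Exactly the same sign-exploiting estimate bounds $\E[|\chi^\epsilon_t|^2]$ by $C\sup_{s\le t}\E[(1+\norm{\eta^\epsilon_s}^2+\norm{\zeta^\epsilon_s}^2+|\xi^\epsilon_s|^2)^2]$ uniformly in $\epsilon$, and a Gr\"onwall argument --- invoking the first-order moment bounds already established and using Cauchy--Schwarz and H\"older on the products --- gives $\sup_{0\le t\le T}\E[\norm{\zeta^\epsilon_t}^2]\le C(T)$ uniformly in $\epsilon$ and $(x,m)$. The third-order estimate for $\kappa^\epsilon_t$ is identical, now bootstrapping on the first- and second-order bounds; propagating sufficiently high moments at the lower levels supplies the exponents $p$ needed for the nonlinear forcings. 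Combined with the chain-rule identities this establishes~\eqref{eq:Kolmo1}, and continuity of $t\mapsto u^\epsilon(t,\cdot)\in\C_b^3(\T^d\times\R,\R)$ follows from the a.s.\ continuity of $t\mapsto(X^\epsilon_t,\eta^\epsilon_t,\zeta^\epsilon_t,\kappa^\epsilon_t)$, the uniform moment bounds, and dominated convergence.

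The only genuine difficulty is the uniformity in $\epsilon$: the noise driving the fast component, hence its variation processes, carries a factor $1/\sqrt\epsilon$ (and, after successive differentiation, polynomial combinations of such terms), which a naive moment estimate cannot absorb. The whole argument hinges on \emph{not} decoupling the damping $-\frac1\epsilon(\cdot)\,dt$ from the fluctuation $\frac1{\sqrt\epsilon}(\cdot)\,d\beta$, but reading them jointly as an Ornstein--Uhlenbeck relaxation toward an ${\rm O}(1)$ equilibrium, so that the singular prefactors cancel at the level of second moments. The remaining work --- writing out the polynomial nonlinearities in the higher-order variation equations and tracking which moments must be propagated --- is routine once the first-order case is in place.
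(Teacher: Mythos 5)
Your proposal is correct and follows essentially the same route as the paper: both represent $D_x^j u^\epsilon$ via the variation processes of the flow and exploit the joint structure of the damping $-\frac{1}{\epsilon}(\cdot)\,dt$ and the $\frac{1}{\sqrt{\epsilon}}(\cdot)\,d\beta$ noise in the fast variation equation so that the singular factors cancel at the level of second moments, before closing the estimate with Gr\"onwall's lemma and iterating for higher-order derivatives. The only cosmetic difference is that the paper runs the cancellation through the Duhamel formula and It\^o's isometry (obtaining $\frac{1}{\epsilon}\int_0^s e^{-2(s-r)/\epsilon}dr\le\frac12$) rather than through your differential second-moment inequality and comparison argument, which amounts to the same computation.
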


\begin{lemma}\label{lem:Kolmo2}
Define $u(t,x)=\E_{x}[\varphi(X(t))]$, for all $t\ge 0$ and $x\in\T^d$, where $\bigl(X(t)\bigr)_{t\ge 0}$ is the solution of the SDE~\eqref{eq:limit_SDE-av} and $\E_x$ means that $X(0)=x$. One has $u\in\mathcal{C}([0,T],\mathcal{C}_b^4(\T^d,\R))$. In addition, there exists $C(T,\varphi)\in(0,\infty)$ such that for all $j\in\{1,2,3,4\}$, one has
\begin{equation}\label{eq:Kolmo2}
\underset{(t,x)\in [0,T]\times\T^d}\sup~\|D_x^ju(t,x)\|\le C(T,\varphi).
\end{equation}
\end{lemma}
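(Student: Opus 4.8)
The plan is to use the probabilistic representation $u(t,x)=\E_x[\varphi(X(t))]$, where $\bigl(X(t)\bigr)_{t\ge 0}$ solves the limiting SDE~\eqref{eq:limit_SDE-av} with $X(0)=x$, and to obtain the spatial regularity of $u$ by differentiating the stochastic flow $x\mapsto X^x(t)$ with respect to the initial condition. Since the coefficients $\overline{b}$ and $\overline{\sigma}$ are of class $\C^4$ on the compact torus $\T^d$, they and all their derivatives up to order $4$ are bounded; this is the only structural input needed, and in particular no nondegeneracy of $\overline{\sigma}\,\overline{\sigma}^\star$ is required (so parabolic Schauder estimates are not available, and the probabilistic route is the natural one).

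First I would introduce the variation processes. The first variation $\eta^{(1),x}(t)=\nabla_x X^x(t)\in\mathcal{M}_d(\R)$ solves the linear SDE obtained by formally differentiating~\eqref{eq:limit_SDE-av},
\[
d\eta^{(1)}(t)=\nabla_x\overline{b}(X(t))\,\eta^{(1)}(t)\,dt+\sum_{k=1}^{\overline{D}}\nabla_x\overline{\sigma}_{\cdot,k}(X(t))\,\eta^{(1)}(t)\,d\overline{B}^k(t),\qquad \eta^{(1)}(0)=I,
\]
and, for $j=2,3,4$, the $j$-th variation $\eta^{(j),x}(t)$ solves a linear SDE with the same homogeneous part and an inhomogeneous forcing term which is a polynomial expression in the lower-order variations $\eta^{(1)},\dots,\eta^{(j-1)}$, with coefficients given by the derivatives of $\overline{b}$ and $\overline{\sigma}$ of order between $2$ and $j$ evaluated at $X(t)$. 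Because all these coefficients are bounded on $\T^d$, a combination of the Burkholder--Davis--Gundy inequality and Gronwall's lemma yields, for every $p\in[1,\infty)$ and $j\in\{1,2,3,4\}$,
\[
\sup_{x\in\T^d}\E\Bigl[\sup_{0\le t\le T}\norm{\eta^{(j),x}(t)}^p\Bigr]\le C(T,p,j)<\infty,
\]
the bounds for $j\ge 2$ being proved inductively using the bounds for $j'<j$ together with the Cauchy--Schwarz inequality.

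Next, by the classical theory of differentiability of solutions of SDEs with respect to the initial data (see e.g.\ the work of Kunita, or~\cite[Chapter~17]{PavliotisStuart}), the map $x\mapsto X^x(t)$ is almost surely of class $\C^4$, its derivatives coincide with the variation processes above, and one may differentiate under the expectation in $u(t,x)=\E[\varphi(X^x(t))]$. Applying the Fa\`a di Bruno formula, $D_x^j u(t,x)$ is a finite sum of expectations of products of a factor $D^k\varphi(X^x(t))$ with $k\le j$ and factors $\eta^{(\ell),x}(t)$ with total order $\sum\ell\le j$. Since $\varphi\in\C^4(\T^d)$ implies $\|D^k\varphi\|_\infty<\infty$ for $k\le 4$, the moment bounds above give~\eqref{eq:Kolmo2}, uniformly in $(t,x)\in[0,T]\times\T^d$, the uniformity in $x$ being automatic from the compactness of $\T^d$. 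Continuity of $t\mapsto u(t,\cdot)\in\C_b^4(\T^d)$ follows from the almost sure continuity of the trajectories $t\mapsto\bigl(X^x(t),\eta^{(j),x}(t)\bigr)$, the above moment bounds, and dominated convergence.

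The main obstacle is the justification of the fourth-order differentiability of the stochastic flow and of the interchange of derivatives and expectation; this is standard but technical, and it is precisely here that the $\C^4$ regularity of $\overline{b}$ and $\overline{\sigma}$ (guaranteed by Assumptions~\ref{ass:coeffs-av} and~\ref{ass:av}) and the $\C^4$ regularity of $\varphi$ are consumed. Once the variation processes are set up, the remaining estimates are routine Gronwall and moment computations, and no uniformity in an extra parameter needs to be tracked, in contrast with Lemma~\ref{lem:Kolmo1}.
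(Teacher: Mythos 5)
Your argument is correct and is essentially the proof the paper delegates to the literature: the paper simply cites \cite[Theorem 1.3.6]{Cerrai}, whose proof is precisely this probabilistic scheme of differentiating the flow $x\mapsto X^x(t)$, bounding the variation processes via Burkholder--Davis--Gundy and Gronwall, and differentiating under the expectation. The only cosmetic caveat is that the differentiability of the flow is most conveniently taken in the mean-square sense (as in Cerrai) rather than almost surely pathwise, but this changes nothing in the estimates.
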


\begin{lemma}\label{lem:Kolmo3}
Let $\Delta t\in(0,\Delta t_0]$.

Define $u_n(x)=\E_x[\varphi(X_n)]$, for all $n\in\N$ and $x\in \T^d$, where $\bigl(X_n\bigr)_{n\ge 0}$ is defined by the limiting scheme~\eqref{eq:scheme-limit-av} (see Theorem~\ref{th:AP-av}), and $\E_x$ means that $X_0=x$. For all $n\ge 0$, one has $u_n\in\mathcal{C}^2(\T^d)$. In addition, there exists $C(T,\varphi)\in(0,\infty)$ such that for all $j\in\{1,2\}$, one has
\begin{equation}\label{eq:Kolmo3-1}
\underset{0\le n\le N}\sup~\underset{x\in \T^d}\sup~\|D_x^ju_n(x)\|\le C(T,\varphi)
\end{equation}
and, for all $\Delta t\in(0,\Delta t_0]$, one has
\begin{equation}\label{eq:Kolmo3-2}
\underset{0\le n\le N-1}\sup~\underset{x\in\T^d}\sup~\|D_x^ju_{n+1}(x)-D_x^ju_n(x)\|\le C(T,\varphi)\Delta t.
\end{equation}
\end{lemma}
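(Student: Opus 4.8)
The plan is to phrase everything in terms of the one-step transition operator of the limiting scheme~\eqref{eq:scheme-limit-av}. Write $(X_n^x)_{n\ge 0}$ for the solution of~\eqref{eq:scheme-limit-av} with initial condition $X_0=x$, driven by the given independent families $(\Gamma_n)$ and $(\gamma_n)$, and set $P_{\Delta t}\psi(x)=\E[\psi(X_1^x)]$; by the Markov property $u_n=P_{\Delta t}^n\varphi$, with $u_0=\varphi$. The assertion $u_n\in\C^2(\T^d)$ I would prove by induction on $n$, differentiating twice under the expectation, which is legitimate since the one-step map $x\mapsto X_1^x$ is of class $\C^2$ (this regularity being inherited from the smoothness of $b,\sigma,h,\varphi$ assumed in Assumption~\ref{ass:coeffs-av}). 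The quantitative estimates~\eqref{eq:Kolmo3-1}--\eqref{eq:Kolmo3-2} I would obtain through the discrete tangent (variational) processes $\eta_n=D_xX_n^x$ and $\zeta_n=D_x^2X_n^x$ obtained by differentiating the recursion~\eqref{eq:scheme-limit-av} in $x$.

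\textbf{Uniform bounds~\eqref{eq:Kolmo3-1}.} Differentiating~\eqref{eq:scheme-limit-av} once yields a linear recursion $\eta_{n+1}=(I+\Delta t\,A_n+\sqrt{\Delta t}\,B_n\Gamma_n)\eta_n$, where $A_n,B_n$ are assembled from the first $x$-derivatives of $(x,\gamma)\mapsto b(x,h(x)\gamma)$ and $(x,\gamma)\mapsto\sigma(x,h(x)\gamma)$ along the trajectory, hence uniformly bounded; the crucial structural fact is that, conditionally on $(X_n^x,\gamma_n)$, the term $\sqrt{\Delta t}\,B_n\Gamma_n\eta_n$ is centered because $\Gamma_n$ is. Differentiating once more gives $\zeta_n$ a recursion with the same homogeneous part and an inhomogeneous forcing that is $(\Delta t\,(\cdot)+\sqrt{\Delta t}\,(\cdot)\,\Gamma_n)$ applied to $\eta_n^{\otimes 2}$, again centered at leading order. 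Taking conditional second moments with respect to the $\sigma$-algebra $\mathcal F_n$ generated by $(\Gamma_k,\gamma_k)_{0\le k\le n-1}$, the cross terms generated by the $\sqrt{\Delta t}$ contributions either vanish by centering of $\Gamma_n$ or are of order $\Delta t$, so that $\E[|\eta_{n+1}|^2\mid\mathcal F_n]\le(1+C\Delta t)|\eta_n|^2$ and $\E[|\zeta_{n+1}|^2\mid\mathcal F_n]\le(1+C\Delta t)|\zeta_n|^2+C\Delta t\,|\eta_n|^4$. Since $\eta_0=I$ and $\zeta_0=0$, a discrete Gr\"onwall argument yields $\sup_{0\le n\le N}\sup_x\E[|\eta_n|^4]\le C(T)$ and $\sup_{0\le n\le N}\sup_x\E[|\zeta_n|^2]\le C(T)$. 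Then $D_xu_n(x)=\E[\nabla_x\varphi(X_n^x)\,\eta_n]$ and $D_x^2u_n(x)=\E[\nabla_x^2\varphi(X_n^x)\,(\eta_n)^{\otimes 2}+\nabla_x\varphi(X_n^x)\,\zeta_n]$, and~\eqref{eq:Kolmo3-1} follows, with a constant depending only on $T$ and $\|\varphi\|_{\C^2}$.

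\textbf{Increment bounds~\eqref{eq:Kolmo3-2}.} Here I would use the semigroup identity $u_{n+1}-u_n=P_{\Delta t}^n(P_{\Delta t}\varphi-\varphi)$, valid because $u_m=P_{\Delta t}^m\varphi$. A one-step Taylor expansion of $P_{\Delta t}\varphi$ (using $\varphi\in\C^4$): the $O(\sqrt{\Delta t})$ correction cancels because $\Gamma_n$ and the one-step increment of the fast variable are centered, and the same cancellation survives differentiation in $x$ up to order two; hence $\|P_{\Delta t}\varphi-\varphi\|_{\C^2(\T^d)}\le C\Delta t\,\|\varphi\|_{\C^4(\T^d)}$. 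It then remains to bound $\|D_x^j(P_{\Delta t}^n\psi)\|_\infty$ for $j\in\{0,1,2\}$ uniformly over $0\le n\le N$ in terms of $\|\psi\|_{\C^2(\T^d)}$, which is exactly the $\C^2$-stability of the discrete semigroup; this follows from the tangent representation of the previous step (with $\psi$ in place of $\varphi$) together with the uniform moment bounds on $\eta_n,\zeta_n$ established there. Applying it to $\psi=P_{\Delta t}\varphi-\varphi$ gives, for $j\in\{1,2\}$, $\|D_x^ju_{n+1}-D_x^ju_n\|_\infty\le C\,\|P_{\Delta t}\varphi-\varphi\|_{\C^2(\T^d)}\le C(T,\varphi)\Delta t$, which is~\eqref{eq:Kolmo3-2}.

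\textbf{Main obstacle.} The only genuinely delicate step is the uniform-in-$\Delta t$ control of the moments of $\eta_n$ and $\zeta_n$ over $N=T/\Delta t$ steps: one must exploit the centering of $\Gamma_n$ so that each step costs only a factor $1+O(\Delta t)$ rather than $1+O(\sqrt{\Delta t})$, which is why the estimates must be carried out at the level of second (or higher) moments rather than first moments. Once this is in place, the discrete Gr\"onwall lemma and the Taylor expansion of the one-step operator are routine.
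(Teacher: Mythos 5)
Your argument is correct and, for the uniform bounds~\eqref{eq:Kolmo3-1}, essentially coincides with the paper's proof: both differentiate the recursion~\eqref{eq:scheme-limit-av} to obtain a tangent process $\eta_n^k$ satisfying $\eta_{n+1}^k=(I+\Delta t\,A_n+\sqrt{\Delta t}\,B_n\Gamma_n)\eta_n^k$, and both rely on the same key point — the centering and independence of $\Gamma_n$ — to get a per-step growth factor $1+{\rm O}(\Delta t)$ in mean square rather than $1+{\rm O}(\sqrt{\Delta t})$, followed by a discrete Gr\"onwall argument; like you, the paper writes out only the first-order derivative and declares the second-order case (your $\zeta_n$) analogous. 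Where you genuinely diverge is in the increment bound~\eqref{eq:Kolmo3-2}. The paper stays on the trajectory level: it writes $D u_{n+1}(x)\cdot k-Du_n(x)\cdot k=\E_x\bigl[D\varphi(X_{n+1})\cdot\eta_{n+1}^k-D\varphi(X_n)\cdot\eta_n^k\bigr]$, splits this into three cross terms involving the coupled increments $X_{n+1}-X_n$ and $\eta_{n+1}^k-\eta_n^k$, and bounds each by $\Delta t\|k\|$ using $\E\|X_{n+1}-X_n\|^2\lesssim\Delta t$, $\E\|\eta_{n+1}^k-\eta_n^k\|^2\lesssim\Delta t\|k\|^2$, Cauchy--Schwarz and a conditioning argument to kill the ${\rm O}(\sqrt{\Delta t})$ contributions. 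You instead use the semigroup identity $u_{n+1}-u_n=P_{\Delta t}^n(P_{\Delta t}\varphi-\varphi)$ together with a one-step consistency estimate $\|P_{\Delta t}\varphi-\varphi\|_{\C^2}\lesssim\Delta t$ and the $\C^2$-stability of $P_{\Delta t}^n$ (which is exactly your part on~\eqref{eq:Kolmo3-1} applied to a general $\psi\in\C^2$). Your route is more modular — it cleanly separates one-step consistency from stability in the classical ``Lady Windermere's fan'' fashion and avoids estimating increments of the tangent process — at the cost of needing the stability estimate for arbitrary $\C^2$ data and the full $\C^4$ regularity of $\varphi$ in the one-step expansion; the paper's route is more direct and exploits the coupling of $X_n$ and $X_{n+1}$ on the same probability space. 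Both are valid, and both share the same (unaddressed, also in the paper) reliance on second derivatives of $x\mapsto h(x)$ for the $j=2$ estimates, whereas Assumption~\ref{ass:coeffs-av} only requires $h\in\C^1$.
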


Based on the auxiliary results stated above, the proof of Theorem~\ref{th:UA-av} is straightforward.
\begin{proof}[Proof of Theorem~\ref{th:UA-av}]
Note that
\begin{align*}
\big|\E[\varphi(X_N^\epsilon)-\E[\varphi(X^\epsilon(T))]\big|&\le \big|\E[\varphi(X(T))-\E[\varphi(X^\epsilon(T))]\big|\\
&\quad+\big|\E[\varphi(X_N)-\E[\varphi(X(T))]\big|\\
&\quad+\big|\E[\varphi(X_N^\epsilon)-\E[\varphi(X_N)]\big|,
\end{align*}
thus combining~\eqref{eq:lem2},~\eqref{eq:lem3} and~\eqref{eq:lem4} (with $\max(\Delta t,\epsilon)\le \Delta t+\epsilon$) yields
\[
\big|\E[\varphi(X_N^\epsilon)-\E[\varphi(X^\epsilon(T))]\big|\le C(T,\varphi)\bigl(\Delta t+\epsilon\bigr).
\]
Combining that error estimate with~\eqref{eq:lem1} then concludes the proof of the error estimate~\eqref{eq:UA-1}. As already explained above, the error estimate~\eqref{eq:UA-2} is a straightforward consequence of~\eqref{eq:UA-1} (considering the cases $\sqrt{\Delta t}\le \epsilon$ and $\sqrt{\Delta t}\ge \epsilon$).

This concludes the proof of Theorem~\ref{th:UA-av}.
\end{proof}

Let us now give proofs of the auxiliary lemmas~\ref{lem1},~\ref{lem2},~\ref{lem3} and~\ref{lem4}, employing the results of Lemmas~\ref{lem:Kolmo1},~\ref{lem:Kolmo2} and~\ref{lem:Kolmo3} (proofs are given below).

The following notation is used below in the proofs of the auxiliary results: for all $\lambda,\mu\ge 0$, $\lambda \lesssim \mu$ means that there exists $C(T,\varphi) \in (0,+\infty)$, independent of $\Delta t$, $\epsilon$ and $n$, such that $\lambda \leq C(T,\varphi)\mu$. In addition, the following notation is used for the infinitesimal generator of the Ornstein-Uhlenbeck process:
\begin{equation*}
    \L_{OU}^x = - m \partial_m + h(x)^2 \partial_m^2,
\end{equation*}
in order to let the dependence with respect to $x$ be clear.

\begin{proof}[Proof of Lemma~\ref{lem1}]
Let us introduce auxiliary continuous-time processes $\tilde{X}^\epsilon$ and $\tilde{m}^\epsilon$
, such that, for all $n\in\{0,\ldots,N\}$, one has $X^\epsilon_n = \tilde X^\epsilon(t_n)$ and $m^\epsilon_n = \tilde m^\epsilon(t_n)$ (recall the definition~\eqref{eq:scheme-AP-av} of the scheme): for $t_n \leq t \leq t_{n+1}$
\begin{align*}
    \tilde X^\epsilon(t) &= X^\epsilon_n + (t-t_n) b(X^\epsilon_n, \tilde m^\epsilon(t)) + \sigma(X^\epsilon_n,\tilde m^\epsilon(t)) \left(B(t) - B(t_n)\right)\\
    d\tilde m^\epsilon_t &= - \frac{\tilde m^\epsilon_t}{\epsilon} dt + \frac{\sqrt{2}h(X^\epsilon_n)}{\sqrt{\epsilon}} d\beta_t.
\end{align*}
Note that $\tilde m^\epsilon$ does satisfy $m^\epsilon_n = \tilde m^\epsilon(t_n)$, since $m^\epsilon_n$ is exact in distribution and $\tilde m^\epsilon$ is an Ornstein-Uhlenbeck process with variance $\frac{2 h(X^\epsilon_n)^2}{\epsilon}$. The process $\tilde{X}^\epsilon$ satisfies on each subinterval $[t_n,t_{n+1}]$ the following stochastic differential equation: for all $t\in[t_n,t_{n+1}]$, one has
    \begin{align*}
        d\tilde X^\epsilon_t &= b(X^\epsilon_n, \tilde m^\epsilon_t) dt + \sigma(X^\epsilon_n, \tilde m^\epsilon_t) dB_t + (t-t_n) \partial_m b(X^\epsilon_n, \tilde m^\epsilon_t) d\tilde m^\epsilon_t\\
        &\phantom{=} + \partial_m \sigma(X^\epsilon_n, \tilde m^\epsilon_t) (B_t - B_{t_n}) d\tilde m^\epsilon_t + \frac{h(X^\epsilon_n)^2}{\epsilon} (t-t_n) \partial_m^2 b(X^\epsilon_n,\tilde m^\epsilon_t) dt\\
        &\phantom{=} + \frac{h(X^\epsilon_n)^2}{\epsilon} \partial_m^2 \sigma(X^\epsilon_n, \tilde m^\epsilon_t) (B_t - B_{t_n}) dt.
    \end{align*}
The expressions for $\tilde{X}^\epsilon$ are complicated due to the fact that in the scheme~\eqref{eq:scheme-AP-av}, $b$ and $\sigma$ are evaluated with $m=m^\epsilon_{n+1}$, which is required to satisfy the AP property.

    Owing to Lemma~\ref{lem:Kolmo1}, the auxiliary function $u^\epsilon$ is of class $\mathcal{C}^2$ and is solution of the Kolmogorov equation $\partial_tu^\epsilon=\L^\epsilon u^\epsilon$. Using a telescoping sum argument and the definition of the auxiliary processes $\tilde{X}^\epsilon$ and $\tilde{m}^\epsilon$, the application of It\^o's formula yields the following standard expression for the weak error:
    \begin{align*}
        \E[\varphi(X_N^\epsilon)] - &\E[\varphi(X^\epsilon(T))] = \E[u^\epsilon(0,X_N^\epsilon,\tilde m^\epsilon(t_N))] - \E[u^\epsilon(T,X_0^\epsilon,\tilde m^\epsilon(0))]\\
        &= \sum_{n=0}^{N-1} \E[u^\epsilon(T-t_{n+1},\tilde{X}^\epsilon(t_{n+1}),\tilde m^\epsilon(t_{n+1})) - u^\epsilon(T-t_n,\tilde{X}^\epsilon(t_n),\tilde m^\epsilon(t_n))]\\
        &= \sum_{n=0}^{N-1} \int_{t_n}^{t_{n+1}} \E[(-\partial_t + \tilde \L_n^\epsilon) u^\epsilon(T-t,\tilde X^\epsilon(t),\tilde m^\epsilon(t))]dt\\
        &= \sum_{n=0}^{N-1} \int_{t_n}^{t_{n+1}} \E[(\tilde \L_n^\epsilon - \L^\epsilon) u^\epsilon(T-t,\tilde X^\epsilon(t),\tilde m^\epsilon(t))]dt,
    \end{align*}
    where the auxiliary differential operator $\tilde{\L}^\epsilon_n$ is such that
    \begin{align*}
        \tilde \L^\epsilon_n u^\epsilon(T-t,\tilde X^\epsilon(t),\tilde m^\epsilon(t)) &= b(X^\epsilon_n,\tilde m^\epsilon(t)) \cdot \nabla_x u^\epsilon(T-t,\tilde X^\epsilon(t),\tilde m^\epsilon(t))\\
        &\phantom{=} + \frac{1}{2} \sigma \sigma^* (X^\epsilon_n,\tilde m^\epsilon(t)) : \nabla_x^2 u^\epsilon(T-t,\tilde X^\epsilon(t),\tilde m^\epsilon(t))\\
        &\phantom{=} + \frac{1}{\epsilon} \L_{OU}^{X^\epsilon_n} u^\epsilon(T-t,\tilde X^\epsilon(t),\tilde m^\epsilon(t)) + r^\epsilon_n(t),
    \end{align*}
    and where the remainder term $r^\epsilon_n(t)$ is given by
    \begin{align*}
        r^\epsilon_n(t) &= \frac{1}{\epsilon} (t-t_n) \L_{OU}^{X^\epsilon_n} b({X^\epsilon_n},\tilde m^\epsilon(t)) \cdot \nabla_x u^\epsilon\\
        &\phantom{=} + \frac{1}{\epsilon} \L_{OU}^{X^\epsilon_n} \sigma({X^\epsilon_n},\tilde m^\epsilon(t)) (B(t) - B(t_n)) \cdot \nabla_x u^\epsilon\\
        &\phantom{=} + \frac{1}{2} D_x^2 u^\epsilon \cdot \left( (t-t_n) \partial_m b({X^\epsilon_n},\tilde m^\epsilon(t)) + \partial_m \sigma({X^\epsilon_n},\tilde m^\epsilon(t))(B(t)-B(t_n)) \right)^2,
    \end{align*}
    where $u^\epsilon$ and its derivatives are evaluated at $(T-t,\tilde X^\epsilon(t),\tilde m^\epsilon(t))$, and where we used the notation $D^2 \phi (x) \cdot (y)^2 = y y^* : \nabla_x^2 \phi(x)$ to simplify the presentation.

Let us first deal with the remainder term $r_n^\epsilon(t)$. Observe that the processes $\bigl(B(t)-B(t_n)\bigr)_{t\in[t_n,t_{n+1}]}$ and $\bigl\{X_n^{\epsilon},\bigl(\tilde{m}^\epsilon(t)\bigr)_{t\in[t_n,t_{n+1}]}\bigr\}$ are independent, thus using a conditioning argument and the regularity estimates from Lemma~\ref{lem:Kolmo1}, one has for all $t\in[t_n,t_{n+1}]$
\[
\abs{\E[r^\epsilon_n(t)]} \lesssim \frac{t-t_n}{\epsilon}.
\]

It remains to deal with
\begin{equation}\label{eq:tilde_L_n_epsilon-L_epsilon}
    \begin{aligned}
        (\tilde \L_n^\epsilon - &\L^\epsilon) u^\epsilon(T-t,\tilde X^\epsilon(t),\tilde m^\epsilon(t))-r^\epsilon_n(t)\\
         &= \left( b(X^\epsilon_n,\tilde m^\epsilon(t)) - b(\tilde X^\epsilon(t),\tilde m^\epsilon(t)) \right) \cdot \nabla_x u^\epsilon(T-t,\tilde X^\epsilon(t),\tilde m^\epsilon(t))\\
        &+ \frac 1 2 \left( \sigma \sigma^* (X^\epsilon_n,\tilde m^\epsilon(t)) - \sigma \sigma^* (\tilde X^\epsilon(t),\tilde m^\epsilon(t)) \right) : \nabla_x^2 u^\epsilon(T-t,\tilde X^\epsilon(t),\tilde m^\epsilon(t))\\
        &+ \left( h(X^\epsilon_n)^2 - h(\tilde X^\epsilon(t))^2 \right) \partial_m^2 u^\epsilon(T-t,\tilde X^\epsilon(t),\tilde m^\epsilon(t)),\\
    \end{aligned}
\end{equation}
where the expressions~\eqref{eq:L^epsilon-av} and~\eqref{eq:L_01-av} for the infinitesimal generator $\L^\epsilon$ have been used. The three quantities appearing in the right-hand side of~\eqref{eq:tilde_L_n_epsilon-L_epsilon} above are of the type
    \begin{equation*}
        \left( V(\tilde X^\epsilon(t), \tilde m^\epsilon(t)) - V(X^\epsilon_n,\tilde m^\epsilon(t)) \right) U^\epsilon(\tilde X^\epsilon(t),\tilde m^\epsilon(t)),
    \end{equation*}
    for $V = b$, $\sigma \sigma^*$ or $h^2$ and $U^\epsilon = \nabla_x u^\epsilon(T-t)$, $\nabla_x^2 u^\epsilon(T-t)$ or $\partial_m^2 u^\epsilon(T-t)$. Using again the independence of $(B(t)-B(t_n))_{t_n \leq t \leq t_{n+1}}$ and $\bigl\{X_n^{\epsilon},\bigl(\tilde{m}^\epsilon(t)\bigr)_{t\in[t_n,t_{n+1}]}\bigr\}$ and regularity properties of $u^\epsilon$ given in Lemma~\ref{lem:Kolmo1}, applying It\^o's formula and conditioning with respect to $(X^\epsilon_n, (\tilde m^\epsilon(t))_{t_n \leq t \leq t_{n+1}})$, one obtains
    \begin{equation*}
        \abs{\E\left[ \left( V(\tilde X^\epsilon(t), \tilde m^\epsilon(t)) - V(X^\epsilon_n,\tilde m^\epsilon(t)) \right) U^\epsilon(X^\epsilon_n,\tilde m^\epsilon(t)) \right]} \lesssim t-t_n.
    \end{equation*}
    Moreover, $\norm{V}_{\C^1(\T^d, C(\R))} \lesssim 1$ and, owing to Lemma~\ref{lem:Kolmo1}, $\norm{U^\epsilon}_{\C^1(\T^d, C(\R))} \lesssim 1$. Therefore, we have
    \begin{align*}
        \E &\left[ \abs{ \left( V(\tilde X^\epsilon(t), \tilde m^\epsilon(t)) - V(X^\epsilon_n,\tilde m^\epsilon(t)) \right) \left( U^\epsilon(\tilde X^\epsilon(t), \tilde m^\epsilon(t)) - U^\epsilon(X^\epsilon_n,\tilde m^\epsilon(t)) \right) } \right]\\
        &\leq \norm{V(\cdot,\tilde m^\epsilon(t))}_{\C^1(\T^d)} \E \left[ \norm{U^\epsilon(\cdot,\tilde m^\epsilon(t))}_{\C^1(\T^d)} \norm{\tilde X^\epsilon(t) - X^\epsilon_n}^2 \right] \\
        &\lesssim \E \left[ \norm{\tilde X^\epsilon(t) - X^\epsilon_n}^2 \right] \lesssim t-t_n,
    \end{align*}
where the last inequality comes from the definition of $\tilde{X}^\epsilon(t)$. Gathering the two estimates above gives, for all $t\in[t_n,t_{n+1}]$,
    \begin{equation*}
        \abs{\E \left[ \left( V(\tilde X^\epsilon(t), \tilde m^\epsilon(t)) - V(X^\epsilon_n,\tilde m^\epsilon(t)) \right) U^\epsilon(\tilde X^\epsilon(t),\tilde m^\epsilon(t)) \right]} \lesssim t-t_n,
    \end{equation*}
and using~\eqref{eq:tilde_L_n_epsilon-L_epsilon} finally yields, for all $t\in[t_n,t_{n+1}]$,
    \begin{align*}
        \E\left[\abs{(\tilde \L_n^\epsilon - \L^\epsilon) u^\epsilon(T-t,\tilde X^\epsilon(t),m^\epsilon(t))}\right] &\lesssim t-t_n + \frac{t-t_n}{\epsilon}\lesssim \frac{\Delta t}{\epsilon}.
    \end{align*}
One then obtains
\[
\abs{\sum_{n=0}^{N-1} \int_{t_n}^{t_{n+1}} \E[(\tilde \L_n^\epsilon - \L^\epsilon) u^\epsilon(T-t,\tilde X^\epsilon(t),\tilde m^\epsilon(t))]dt}\lesssim \frac{\Delta t}{\epsilon},
\]
which concludes the proof of~\eqref{eq:lem1} and of Lemma~\ref{lem1}.
\end{proof}

 \begin{proof}[Proof of Lemma~\ref{lem2}]
 For all $x\in\T^d$, introduce the auxiliary Ornstein-Uhlenbeck process $m^x$ solving the SDE
     \begin{equation*}
             d m^x_t = - m^x_t dt + \sqrt{2}h(x) d\beta_t
     \end{equation*}
 Let $m^x(t,m)$ denote the solution at time $t$, if the initial condition is given by $m^x(0,m)$. The invariant distribution of the process $m^x$ is equal to $\nu^x$. Note that for $m$ and $m' \in \R$, one has $m^x(t,m)-m^x(t,m') = (m-m') e^{-t}$. Consider $V=b$ or $V=\sigma\sigma^\star$, and let
\begin{equation*}
\delta(t,x,m)=\E[V(x,m^\epsilon(t,m))-\overline{V}(x)].
\end{equation*} 
Note that for $m$ and $m' \in \R$, one has $m^x(t,m)-m^x(t,m') = (m-m') e^{-t}$. As a consequence, we have
\begin{equation*}
    \norm{\delta(t,x,m) - \delta(t,x,m')} \lesssim \abs{m-m'} e^{-t}.
\end{equation*}
By integrating with respect to $m'$ and using the equality $\overline V(x) = \int V(x,m') d\nu^x(m')$, one obtains
\begin{equation}\label{eq:OU_exponential-mixing}
    \norm{\delta(t,x,m)} \lesssim (1+\abs{m}) e^{-t}.
\end{equation}
 Using the fact that $\delta$ and its derivatives satisfy~\eqref{eq:OU_exponential-mixing} with $V=b$, one is able to check that the function $\psi_b$ given by, for all $x\in\T^d$ and $m\in\R$,
     \begin{equation*}
         \psi_b(x,m) = - \int_0^\infty \E[b(x,m^x(t,m)) - \overline b(x)] dt
     \end{equation*}
 is well-defined (by definition of $\overline{b}$, see~\eqref{eq:averagedcoeffs1}) and is of class $\mathcal{C}^2(\T^d \times \R)$. Moreover, $\psi_b$ and its derivative have at most linear growth in $m$. In addition, $\psi_b(x,\cdot)$ solves the Poisson equation $\L_{OU}^x \psi_b(x,m) = b(x,m) - \overline b(x)$, for all $x\in\T^d$ (indeed $\nu^x$ is the invariant distribution associated with the generator $\L_{OU}^x$ of the Ornstein-Uhlenbeck process $m^x$).

 Similarly, define, for all $x\in\T^d$ and $m\in\R$,
     \begin{equation*}
         \psi_\sigma(x,m) = - \int_0^\infty \E[\sigma \sigma^*(x,m^x(t,m)) - \overline \sigma \, \overline \sigma^*(x)] dt.
     \end{equation*}
 The function $\psi_\sigma$ is well-defined: owing to~\eqref{eq:averagedcoeffs2} (Assumption~\ref{ass:coeffs-av}) one has the equality $\int \sigma \sigma^*(x,m)d\nu^x(m)=\overline \sigma \, \overline \sigma^*(x)$ for all $x\in\T^d$, and using the same arguments as above, $\psi_\sigma(x,\cdot)$ solves the Poisson equation $\L_{OU}^x\psi_\sigma(x,m)=\sigma \sigma^*(x,m) - \overline \sigma \, \overline \sigma^*(x)$, for all $x\in\T^d$.

 Now, for all $t\in[0,T]$, $x\in\T^d$ and $m\in\R$, let
 \begin{equation*}
     \Phi(t,x,m) = \psi_b(x,m) \cdot \nabla_x u(T-t,x) + \psi_\sigma(x,m) : \nabla_x^2 u(T-t,x),
 \end{equation*}
 where $u$ is given by Lemma~\ref{lem:Kolmo2} and solves the Kolmogorov equation $\partial_tu=\L u$.

 On the one hand, applying It\^o's formula yields the following expression for the error term:
     \begin{align*}
         \E[\varphi(X^\epsilon(T))] - &\E[\varphi(X(T))] = \E[u(0,X^\epsilon(T)) - u(T,X^\epsilon(0))]\\
         &= \int_{0}^{T} \E[\left(b(X^\epsilon(t),m^\epsilon(t)) - \overline{b}(X^\epsilon(t))\right) \cdot \nabla_x u(T-t,X^\epsilon(t))] dt\\
         &\phantom{=} + \frac{1}{2} \int_{0}^{T} \E[\left(\sigma \sigma^*(X^\epsilon(t),m^\epsilon(t)) - \overline \sigma \, \overline \sigma^*(X^\epsilon(t))\right) : \nabla_x^2 u(T-t,X^\epsilon(t))] dt\\
         &= \int_{0}^{T} \E[\L_{OU}^x\Phi(t,X^\epsilon(t),m^\epsilon(t))] dt,
     \end{align*}
 by definition of the auxiliary function $\Phi$, since $\psi_b$ and $\psi_\sigma$ are solutions of Poisson equations.

 On the other hand, applying It\^o's formula also gives the identity
 \begin{multline*}
     \E[\Phi(T,X^\epsilon(T),m^\epsilon(T))] - \E[\Phi(0,X^\epsilon(0),m^\epsilon(0))]\\
     = \int_{0}^{T} \E[(\partial_t + b \cdot \nabla_x + \frac{1}{2} \sigma \sigma^*:\nabla_x^2 + \frac{1}{\epsilon}\L_{OU}^x) \Phi(t,X^\epsilon(t),m^\epsilon(t))] dt.
 \end{multline*}
 Combining the two expressions then gives
 \begin{align*}
     \E[\varphi(X^\epsilon(T))] - &\E[\varphi(X(T))] = \int_{0}^{T} \E[\L_{OU}^x\Phi(t,X^\epsilon(t),m^\epsilon(t))] dt\\
     &= \epsilon \left( \E[\Phi(T,X^\epsilon(T),m^\epsilon(T))] - \E[\Phi(0,X^\epsilon(0),m^\epsilon(0))] \right)\\
     &\phantom{=} - \epsilon \int_{0}^{T} \E[(\partial_t + b \cdot \nabla_x + \frac{1}{2} \sigma \sigma^*:\nabla_x^2) \Phi(t,X^\epsilon(t),m^\epsilon(t))] dt.
 \end{align*}

 Using the regularity estimates from Lemma~\ref{lem:Kolmo2} and the identity
 \begin{equation*}
     \partial_t u = \overline b \cdot \nabla_x u + \overline \sigma \, \overline \sigma^* : \nabla_x^2 u,
 \end{equation*}
 it is then straightforward to obtain~\eqref{eq:lem2}. This concludes the proof of Lemma~\ref{lem2}.
 \end{proof}

\begin{proof}[Proof of Lemma~\ref{lem3}]
Let us introduce the continuous-time auxiliary process $\tilde{X}$, such that, for $t \in [t_n,t_{n+1}]$, one has
    \begin{equation*}
        \tilde{X}(t) = X_n + (t-t_n)b(X_n,h(X_n)\gamma_n) + \sigma(X_n,h(X_n)\gamma_n) (B(t) - B(t_n)).
    \end{equation*}
Introduce also the second-order differential operator $\tilde \L_n = b(X_n,h(X_n)\gamma_n) \cdot \nabla + \frac{1}{2} \sigma \sigma^* (X_n,h(X_n)\gamma_n) : \nabla^2$. With this notation, for any function $\phi \in \C^2(\T^d)$, It\^o's formula gives, for all $t\in[t_n,t_{n+1}]$,
    \begin{equation} \label{eq:Ito_X-tilde}
        \phi(\tilde X(t)) - \phi(X_n) = \int_{t_n}^t \tilde \L_n \phi(\tilde X(s)) ds + \int_{t_n}^t \nabla \phi (\tilde X(s)) \cdot \sigma(X_n,h(X_n)\gamma_n) dB_s.
    \end{equation}
Using the same (standard) arguments as in the proof of Lemma~\ref{lem1}, one obtains the following decomposition of the error:
    \begin{align*}
        \E[\varphi(&X_N)] - \E[\varphi(X(T))] = \E[u(0,X_N)] - \E[\varphi(T,X_0)]\\
        &= \sum_{n=0}^{N-1} \int_{t_n}^{t_{n+1}} \E[u(T-t_{n+1},X_{n+1}) - u(T-t_n,X_n)]\\
        &= \sum_{n=0}^{N-1} \int_{t_n}^{t_{n+1}} \E[(-\partial_t + \tilde \L_n) u(T-t,\tilde X(t))] dt\\
        &= \sum_{n=0}^{N-1} \int_{t_n}^{t_{n+1}} \E\left[\left(b(X_n,h(X_n)\gamma_n) - \overline{b}(\tilde X(t))\right) \cdot \nabla_x u(T-t,\tilde X(t))\right]dt\\
        &\phantom{=} + \frac 1 2 \sum_{n=0}^{N-1} \int_{t_n}^{t_{n+1}} \E\left[\left(\sigma \sigma^* (X_n,h(X_n)\gamma_n) - \overline \sigma \, \overline \sigma^* (\tilde X(t))\right) : \nabla_x^2 u(T-t,\tilde X(t)) \right]dt.
    \end{align*}
The error term $\E\left[\left(b(X_n,h(X_n)\gamma_n) - \overline{b}(\tilde X(t))\right) \cdot \nabla_x u(T-t,\tilde X(t))\right]$ (with $V=b$ and $U=\nabla_xu$) and $\E\left[\left(\sigma \sigma^* (X_n,h(X_n)\gamma_n) - \overline \sigma \, \overline \sigma^* (\tilde X(t))\right) : \nabla_x^2 u(T-t,\tilde X(t)) \right]$ (with $V=\sigma \sigma^*$ and $U=\nabla_x^2 u$) are written as
    \begin{equation*}
        \left( V(X_n,h(X_n) \gamma_n) - \overline V(\tilde X(t)) \right) U(T-t, \tilde X(t)).
    \end{equation*}
Note that $\tilde X(t_n) = X_n$. As a consequence,
\[
\E\left[\left( V(X_n,h(X_n) \gamma_n) - \overline V(\tilde X(t)) \right) U(T-t, \tilde X(t))\right]=\delta_n^1(t)+\delta_n^2(t)+\delta_n^3(t)+\delta_n^4(t),
\]
where
\begin{align*}
\delta_n^1(t)&=\E \left[ \left( V(X_n,h(X_n)\gamma_n) - \overline V(X_n) \right) U(T-t,X_n) \right]\\
\delta_n^2(t)&=\E \left[ \left( \overline V(X_n) - \overline V(\tilde X(t)) \right) U(T-t,X_n)\right]\\
\delta_n^3(t)&=\E \left[ \left( V(X_n,h(X_n)\gamma_n) - \overline V(X_n) \right) \left( U(T-t,\tilde X(t)) - U(T-t,X_n) \right)\right]\\
\delta_n^4(t)&=\E \left[  \left( \overline V(X_n) - \overline V(\tilde X(t)) \right) \left( U(T-t,\tilde X(t)) - U(T-t,X_n) \right)\right].
\end{align*}

It remains to treat the four error terms $\delta_n^j(t)$, $j=1,2,3,4$.

Let us start with the most important observation: by definition of $\overline{V}(x)=\int V(x,m)d\nu^x(m)$, the independence of the random variables $X_n$ and $\gamma_n$  yields the identity
\begin{multline*}
    \E \left[ \left( V(X_n,h(X_n)\gamma_n) - \overline V(X_n) \right) U(T-t,X_n) \right]\\
    = \E \left[ \E \left[ \left( V(X_n,h(X_n)\gamma_n) - \overline V(X_n) \right) U(T-t,X_n) \mid X_n \right] \right] = 0.
\end{multline*}
The fact that this term vanishes is fundamental since it justifies the consistency of the scheme~\eqref{eq:scheme-limit-av} with the limiting equation~\eqref{eq:limit_SDE-av} (see also Theorem~\ref{th:AP-av} and its proof), and the AP property.

To treat the second term, observe that $X_n$ and $(\tilde X(s),B(s))_{s > t_n}$ are independent random variables, thus conditioning with respect to $X_n$ and applying It\^o's formula~\eqref{eq:Ito_X-tilde} gives
\begin{multline*}
    \E \left[ \left( \overline V(X_n) - \overline V(\tilde X(t)) \right) U(T-t,X_n) \right] = \E \left[ \int_{t_n}^t \tilde \L_n \overline V(\tilde X(s)) ds\, U(T-t,X_n) \right].
\end{multline*}
Since $U$ is bounded (owing to the regularity estimates from Lemma~\ref{lem:Kolmo2}) and since $\overline V \in \C^2(\T^d)$ (by assumptions on the coefficients $b$ and $\sigma$, see Assumption~\ref{ass:coeffs-av}), one obtains
\begin{equation*}
    \abs{\E \left[ \left( \overline V(X_n) - \overline V(\tilde X(t)) \right) U(T-t,X_n) \right]} \lesssim t-t_n.
\end{equation*}

The treatment of the third term uses a conditioning argument, and It\^o's formula~\eqref{eq:Ito_X-tilde}: one has
\begin{multline*}
    \E \left[ \left( V(X_n,h(X_n)\gamma_n) - \overline V(X_n) \right) \left( U(T-t,\tilde X(t)) - U(T-t,X_n) \right) \mid X_n,\gamma_n \right]\\
    = \E \left[ \left( V(X_n,h(X_n)\gamma_n) - \overline V(X_n) \right) \int_{t_n}^t \tilde \L_n U(T-t,\tilde X(s)) ds \mid X_n,\gamma_n \right].
\end{multline*}
Using the regularity properties from Lemma~\ref{lem:Kolmo2}, one obtains
\begin{equation*}
    \abs{\E \left[ \left( V(X_n,h(X_n)\gamma_n) - \overline V(X_n) \right) \left( U(T-t,\tilde X(t)) - U(T-t,X_n) \right) \right]} \lesssim t-t_n.
\end{equation*}

The treatment of the fourth error term is straightforward: since $U$ and $\overline V$ are Lipschitz continuous (owing to Lemma~\ref{lem:Kolmo2}), one has
\begin{align*}
    \abs{\E \left[ \left( U(T-t,\tilde X(t)) - U(T-t,X_n) \right) \left( \overline V(X_n) - \overline V(\tilde X(t)) \right) \right]} &\lesssim \E\left[\norm{\tilde X(t) - X_n}^2\right]\\
    &\lesssim t-t_n.
\end{align*}

The estimates above are of the type
\[
\abs{\delta_n^j(t)}\lesssim t-t_n,
\]
for all $t\in[t_n,t_{n+1}]$ and $j=1,2,3,4$.
Finally, one obtains
    \begin{equation*}
        \abs{\E[\varphi(X_N)] - \varphi(X(T))} \lesssim \sum_{n=0}^{N-1} \int_{t_n}^{t_{n+1}} (t-t_n) dt \lesssim \Delta t,
    \end{equation*}
    which concludes the proof of Lemma~\ref{lem3}.
\end{proof}

\begin{proof}[Proof of Lemma~\ref{lem4}]
    The idea is to adapt the proof of Lemma~\ref{lem2} (see Section~\ref{sec:app-av}) to the discrete-time situation. Let us start with preparatory computations. A telescoping sum argument yields the equality
    \begin{align}
        \E[\varphi(X_N^\epsilon)] - \E[\varphi(X_N)] &= \E[u_0(X_N^\epsilon)] - u_N(X_0^\epsilon) \nonumber\\
        &= \sum_{n=0}^{N-1}\left(\E[u_{N-n-1}(X_{n+1}^\epsilon)] - \E[u_{N-n}(X_{n}^\epsilon)]\right), \label{eq:averaging_error_discrete-error}
    \end{align}
    where the auxiliary function $u_n$ is defined in Lemma~\ref{lem:Kolmo3}. Using the definition of the scheme~\eqref{eq:scheme-AP-av}, and Markov property combined with the expression of the limiting scheme~\eqref{eq:scheme-limit-av}, one obtains
    \begin{align*}
        u_{N-n-1}(X_{n+1}^\epsilon) &= u_{N-n-1} \left( X_{n}^\epsilon + \Delta t b(X_n^\epsilon,m^\epsilon_{n+1}) + \sqrt{\Delta t} \sigma(X_n^\epsilon,m^\epsilon_{n+1}) \Gamma_n \right),\\
        \E[u_{N-n}(X_{n}^\epsilon)] &= \E\left[u_{N-n-1} \left( X_{n}^\epsilon + \Delta t b(X_n^\epsilon,h(X^\epsilon_n)\gamma_n) + \sqrt{\Delta t} \sigma(X_n^\epsilon,h(X^\epsilon_n)\gamma_n) \Gamma_n \right)\right].
    \end{align*}

A second order Taylor expansion then gives
    \begin{align*}
        \E[&u_{N-n-1}(X_{n+1}^\epsilon)] - \E[u_{N-n}(X_{n}^\epsilon)] \\
        &=\Delta t \E\left[\left(b(X_n^\epsilon,m^\epsilon_{n+1}) - b(X_n^\epsilon,h(X^\epsilon_n)\gamma_n)\right) \cdot \nabla_x u_{N-n-1}(X^\epsilon_n)\right]\\
        &\phantom{=} + \frac{\Delta t}{2} \E\left[\left(\sigma \Gamma_n \Gamma_n^* \sigma^* (X_n^\epsilon,m^\epsilon_{n+1}) - \sigma \Gamma_n \Gamma_n^* \sigma^* (X_n^\epsilon,h(X^\epsilon_n)\gamma_n)\right) : \nabla_x^2 u_{N-n-1}(X^\epsilon_n)\right]\\
        &\phantom{=}+ \sqrt{\Delta t} \E\left[\left( \sigma(X_n^\epsilon,m^\epsilon_{n+1}) - \sigma(X_n^\epsilon,h(X^\epsilon_n)\gamma_n) \right) \Gamma_n \cdot \nabla_x u_{N-n-1}(X^\epsilon_n)\right]\\
        &\phantom{=} + \Delta t^{3/2} \E\left[\left(\sigma \Gamma_n b^* (X_n^\epsilon,m^\epsilon_{n+1}) - \sigma \Gamma_n b^* (X_n^\epsilon,h(X^\epsilon_n)\gamma_n)\right) : \nabla_x^2 u_{N-n-1}(X^\epsilon_n)\right]\\
        &\phantom{=} + \Delta t^2 \E[R_n(\Delta t)].
    \end{align*}
Using the regularity estimates from Lemma~\ref{lem:Kolmo3}, one has $\abs{R_n(\Delta t)} \lesssim 1$. Note that the terms of the orders $\Delta t^{\frac12}$ and $\Delta t^{\frac32}$ in the right-hand side above vanish, since the random variables $\Gamma_n$ and $(X^\epsilon_n, m^\epsilon_{n+1}, \gamma_n)$ are independent, and $\E[\Gamma_n]=0$. In addition, since $\E[\Gamma_n\Gamma_n^*]=I$, a conditioning argument yields
    \begin{equation} \label{eq:averaging_error_discrete-error2}
    \begin{aligned}
        \E[u_{N-n-1}&(X_{n+1}^\epsilon) - u_{N-n}(X_{n}^\epsilon)]\\
        &= \Delta t \E \left[ \left(b(X_n^\epsilon,m^\epsilon_{n+1}) - b(X_n^\epsilon,h(X^\epsilon_n)\gamma_n) \right) \cdot \nabla_x u_{N-n-1}(X^\epsilon_n) \right]\\
        &\phantom{=} + \Delta t \E \left[ \left(\sigma \sigma^* (X_n^\epsilon,m^\epsilon_{n+1}) - \sigma \sigma^* (X_n^\epsilon,h(X^\epsilon_n)\gamma_n)\right) : \nabla_x^2 u_{N-n-1}(X^\epsilon_n) \right]\\
        &\phantom{=} + \Delta t^2 \E[R_n(\Delta t)].
    \end{aligned}
    \end{equation}
Like in the proofs of Theorem~\ref{th:AP-av} and of Lemma~\ref{lem3}, a conditioning argument allows us to rewrite the expressions above in terms of the functions $\overline{b}$ and $\overline{\sigma}$: one has
    \begin{multline*}
        \E[\left(b(X_n^\epsilon,m^\epsilon_{n+1}) - b(X_n^\epsilon,h(X^\epsilon_n)\gamma_n\right) \cdot \nabla_x u_{N-n-1}(X^\epsilon_n)]\\
         = \E[\left(b(X_n^\epsilon,m^\epsilon_{n+1}) - \overline{b}(X_n^\epsilon\right) \cdot \nabla_x u_{N-n-1}(X^\epsilon_n)],
    \end{multline*}
    and
    \begin{multline*}
        \E[\left(\sigma \sigma^* (X_n^\epsilon,m^\epsilon_{n+1}) - \sigma \sigma^* (X_n^\epsilon,h(X^\epsilon_n)\gamma_n\right) : \nabla_x^2 u_{N-n-1}(X^\epsilon_n)]\\
        = \E[\left(\sigma \sigma^* (X_n^\epsilon,m^\epsilon_{n+1}) - \overline \sigma \, \overline \sigma^* (X_n^\epsilon\right) : \nabla_x^2 u_{N-n-1}(X^\epsilon_n)],
    \end{multline*}
We are now in position to employ similar arguments as in the proof of Lemma~\ref{lem2} (see Section~\ref{sec:app-av}), with important modifications due to the discrete-time setting. Introduce the auxiliary parameter $\tau = \frac{\Delta t}{\epsilon}$. Instead of studying Poisson equations associated with the infinitesimal generator $\L_{OU}^x$, one needs to consider the generator $L_\tau^x$  and the transition semigroup of a Markov chain: let
    \begin{equation*}
        L_\tau^x = \frac{P_\tau^x-I}\tau \mbox{ where } P_\tau^x \phi(m) \doteq \E_{\gamma \sim \mathcal N(0,1)} [\phi(e^{-\tau}m + \sqrt{1 - e^{-2 \tau}} h(x) \gamma)].
    \end{equation*}
We claim that the function $\psi_b^\tau$ defined by
    \begin{equation*}
        \psi_b^\tau(x,m) = - \tau \sum_{n=0}^{+\infty} (\left( P_\tau^x \right)^n b(x,m) - \overline{b}(x)).
    \end{equation*}
is well-defined and solves the Poisson equation $L_\tau^x \psi_b^\tau(x,\cdot) = b(x,\cdot) - \overline{b}(x)$. Indeed, let $\left( m^x_n(m) \right)_n$ be defined by
    \begin{equation*}
m^x_{n+1}(m) = e^{-\tau} m^x_n(m) + \sqrt{1 - e^{-2 \tau}} h(x) \gamma_n,\quad m^x_0(m) = m.
    \end{equation*}
Then, for all $m$ and $m' \in \R$, and all $n\in\N$, one has
    \begin{equation*}
        m^x_{n}(m) - m^x_{n}(m') = e^{-n\tau}(m-m').
    \end{equation*}
Observe that since $b$ is a Lipschitz continuous function, standard arguments give the following upper bound: for all $n\in\N$, $x\in\T^d$ and all $m,m'\in\R$, if $\delta_n(x,m) = \left( P_\tau^x \right)^n b(x,m) - \overline b(x)$, then one has
    \begin{align*}
        \big|\delta_n(x,m)\big|&=\big|\left( P_\tau^x \right)^n b(x,m) - \int \left( P_\tau^x \right)^n b(x,m')d\nu^x(m')\big|\\
        &\le \int \big|\E b(x,m^x_n(m))-\E b(x,m^x_n(m')\big|d\nu^x(m')\\
        &\lesssim e^{-n\tau}(1+|m|).
    \end{align*}
Similarly, since the derivatives of $m^x_n$ with respect to $x$ do not depend on $m$, one can check that the inequality above holds for $D_x \delta$ and $D_x^2 \delta$ then concludes the proof of the claim. Since $\tau\sum_{n=0}^{\infty}e^{-n\tau}=\frac{\tau}{1-e^{-\tau}}\le \max(\tau,1)$, for all $\tau\in(0,\infty)$, one obtains inequalities of the type
    \begin{equation} \label{eq:estimate_psi}
        \frac{\|\psi(x,m)\|}{1+|m|} \lesssim \max(\tau,1),
    \end{equation}
for $\psi_b^\tau$, and its derivatives $D_x \psi_b^\tau$ and $D_x^2 \psi_b^\tau$.

Similarly, define for all $x\in\T^d$ and $m\in\R$,
    \begin{equation*}
        \psi_\sigma^\tau(x,m) = - \tau \sum_{n=0}^{+\infty} (\left( P_\tau^x \right)^n\sigma \sigma^* (x,m) - \overline \sigma \, \overline \sigma^* (x)).
    \end{equation*}
Then $\psi_\sigma^\tau$ is well-defined and solves the Poisson equation $L_\tau^x\psi_\sigma(x,\cdot)=\sigma\sigma^\star(x,\cdot)-\overline{\sigma}(x)\overline{\sigma}^\star(x)$, by definition of $\overline{\sigma}(x)$, see Assumption~\ref{ass:av}. In addition, $\psi_\sigma^\tau$ and its derivatives satisfy upper bound of the type~\eqref{eq:estimate_psi}.

Like in the proof of Lemma~\ref{lem2} (see Section~\ref{sec:app-av}), introduce the auxiliary function defined by
    \begin{equation*}
        \Phi_n(x,m) = \psi_b^\tau(x,m) \cdot \nabla_x u_{N-n-1}(x) + \psi_\sigma^\tau(x,m) : \nabla_x^2 u_{N-n-1}(x),
    \end{equation*}
for all $n\in\{0,\ldots,N-1\}$, $x\in\T^d$ and $m\in\R$, where $u_n$ is defined in Lemma~\ref{lem:Kolmo3}. Combining the decomposition of the error~\eqref{eq:averaging_error_discrete-error} and the identity~\eqref{eq:averaging_error_discrete-error2}, one then obtains the following new expression for the error:
    \begin{align}
        \E[\varphi(X_N^\epsilon)] - \E[\varphi(X_N)]
        &= \Delta t^2 \sum_{n=0}^{N-1} \E[R_n(\Delta t)] + \Delta t \sum_{n=0}^{N-1} L_\tau^{X^\epsilon_n} \Phi_n(X^\epsilon_n,m^\epsilon_{n+1}). \label{eq:averaging_error_discrete-error3}
    \end{align}
On the one hand, a telescoping sum argument yields the following expression:
    \begin{align*}
        \E\bigl[ P_\tau^{X^\epsilon_N} &\Phi_N(X^\epsilon_N,m^\epsilon_N) - P_\tau^{X^\epsilon_0} \Phi_0(X^\epsilon_0,m^\epsilon_0) \bigr] \\
        &= \sum_{n=0}^{N-1} \E\left[ P_\tau^{X^\epsilon_{n+1}} \Phi_{n+1}(X^\epsilon_{n+1},m^\epsilon_{n+1}) - P_\tau^{X^\epsilon_n} \Phi_n(X^\epsilon_n,m^\epsilon_n) \right]\\
        &= \sum_{n=0}^{N-1} \E\left[ P_\tau^{X^\epsilon_{n+1}} \Phi_{n+1}(X^\epsilon_{n+1},m^\epsilon_{n+1}) - P_\tau^{X^\epsilon_{n+1}} \Phi_n(X^\epsilon_{n+1},m^\epsilon_{n+1}) \right]\\
        &\phantom{=} + \sum_{n=0}^{N-1} \E\left[ P_\tau^{X^\epsilon_{n+1}} \Phi_n(X^\epsilon_{n+1},m^\epsilon_{n+1}) - P_\tau^{X^\epsilon_n} \Phi_n(X^\epsilon_n,m^\epsilon_{n+1}) \right]\\
        &\phantom{=} + \sum_{n=0}^{N-1} \E\left[ P_\tau^{X^\epsilon_n} \Phi_n(X^\epsilon_n,m^\epsilon_{n+1}) - P_\tau^{X^\epsilon_n} \Phi_n(X^\epsilon_n,m^\epsilon_n) \right].
    \end{align*}
Note that using Markov property, the first sum on the right-hand side above can be written as $\sum_{n=0}^{N-1} \E\left[ \Phi_{n+1}(X^\epsilon_{n+1},m^\epsilon_{n+2}) - \Phi_n(X^\epsilon_{n+1},m^\epsilon_{n+2}) \right]$.

On the other hand, by definition of the operator $L_\tau^x$ with the parameter $\tau=\frac{\Delta t}{\epsilon}$, one obtains
    \begin{align*}
        \Delta t \E[L_\tau^{X^\epsilon_n} \Phi_n(X^\epsilon_n,m^\epsilon_{n+1})] &= \epsilon \E[P_\tau^{X^\epsilon_n} \Phi_n(X^\epsilon_n,m^\epsilon_{n+1}) - \Phi_n(X^\epsilon_n,m^\epsilon_{n+1})]\\
        &= \epsilon \E[P_\tau^{X^\epsilon_n} \Phi_n(X^\epsilon_n,m^\epsilon_{n+1}) - P_\tau^{X^\epsilon_n} \Phi_n(X^\epsilon_n,m^\epsilon_n)].
    \end{align*}
Finally, combining the two identities above, one obtains the following expression for the error:
    \begin{align*}
        \E[\varphi(X_N^\epsilon)] - \E[\varphi(X_N)] &= \Delta t^2 \sum_{n = 0}^{N-1} \E[R_n]\\
        &\phantom{=} + \epsilon \left( \E\left[ \Phi_N(X^\epsilon_N,m^\epsilon_{N+1}) - \Phi_0(X^\epsilon_0,m^\epsilon_1)\right] \right)\\
        &\phantom{=} - \epsilon \sum_{n=0}^{N-1} \E\left[ \Phi_{n+1}(X^\epsilon_{n+1},m^\epsilon_{n+2}) - \Phi_n(X^\epsilon_{n+1},m^\epsilon_{n+2}) \right]\\
        &\phantom{=} - \epsilon \sum_{n=0}^{N-1} \E\left[ P_\tau^{X^\epsilon_{n+1}} \Phi_n(X^\epsilon_{n+1},m^\epsilon_{n+1}) - P_\tau^{X^\epsilon_n} \Phi_n(X^\epsilon_n,m^\epsilon_{n+1}) \right].
    \end{align*}
It then remains to use auxiliary upper bounds to deduce the result, in particular using~\eqref{eq:estimate_psi}. Note that $\epsilon\max(\tau,1)=\max(\Delta t,\epsilon)$.
    \begin{itemize}
        \item as explained above, $\E[\abs{R_n(\Delta t)}] \lesssim 1$, thus the first term satisfies
        \begin{equation*}
            \E[\abs{\Delta t^2 \sum_{n = 0}^{N-1} R_n}] \lesssim \Delta t.
        \end{equation*}

        \item Using the upper bound~\eqref{eq:Kolmo3-1} from Lemma~\ref{lem:Kolmo3} and~\eqref{eq:estimate_psi} with $\psi=\psi_b^\tau$ and $\psi=\psi_\sigma^\tau$, the second term satisfies
       \begin{equation*}
           \epsilon \left( \E\left[ \Phi_N(X^\epsilon_N,m^\epsilon_{N+1}) - \Phi_0(X^\epsilon_0,m^\epsilon_1)\right] \right) \lesssim \max(\Delta t,\epsilon).
       \end{equation*}

        \item Observe that one has for all $x\in\T^d$ and $x\in\R$,
        \begin{align*}
            \Phi_{n+1}(x,m) - \Phi_n(x,m) &= \psi_b^\tau(x,m) \cdot \left(\nabla_x u_{N-n}(x) - \nabla_x u_{N-n-1}(x)\right)\\
            &\phantom{=} + \psi_\sigma^\tau(x,m) : \left(\nabla_x^2 u_{N-n}(x) - \nabla_x^2 u_{N-n-1}(x)\right).
        \end{align*}
        Using the upper bound~\eqref{eq:Kolmo3-2} from Lemma~\ref{lem:Kolmo3} and~\eqref{eq:estimate_psi}, one has
        \begin{equation*}
            \abs{\Phi_{n+1}(x,m) - \Phi_n(x,m)} \lesssim \Delta t\max(\tau,1)(1+|m|).
        \end{equation*}
As a consequence, owing to~\eqref{eq:moment_m-eps-n} and Assumption~\ref{ass:init}, the third term satisfies
        \begin{align*}
            \epsilon \sum_{n=0}^{N-1} \E\left[ \Phi_{n+1}(X^\epsilon_{n+1},m^\epsilon_{n+2}) - \Phi_n(X^\epsilon_{n+1},m^\epsilon_{n+2}) \right]&\lesssim \epsilon \sum_{n=0}^{N-1} \Delta t \max(\tau,1)
            \\&\lesssim \max(\Delta t,\epsilon).
        \end{align*}

        \item Note that $\Phi_n$ and its derivatives satisfy the upper bound~\eqref{eq:estimate_psi}, owing to~\eqref{eq:Kolmo3-1} from Lemma~\ref{lem:Kolmo3}. Let
        \begin{equation*}
            f_n^\tau(x,m) = P_\tau^x \Phi_n(x,m) = \E_{\gamma \sim \mathcal N(0,1)} [\Phi_n(x,e^{-\tau}m + \sqrt{1 - e^{-2 \tau}} h(x) \gamma)].
        \end{equation*}
It is straightforward to check that $f_n^\tau$ is twice differentiable. In addition, $f_n^\tau$ and its derivatives satisfy~\eqref{eq:estimate_psi}. Using a second order Taylor expansion, one obtains
\begin{align*}
\abs{P_\tau^{X^\epsilon_{n+1}} \Phi_n(X^\epsilon_{n+1},m^\epsilon_{n+1}) &- P_\tau^{X^\epsilon_n} \Phi_n(X^\epsilon_n,m^\epsilon_{n+1})}\\
&=\abs{f_n^\tau(X^\epsilon_{n+1},m_{n+1}^{\epsilon}) - f_n^\tau(X^\epsilon_n,m_n^\epsilon)}\\
&\lesssim \Delta t\max(\tau,1) (1 + \abs{m}).
\end{align*}
Finally, using~\eqref{eq:moment_m-eps-n} and Assumption~\ref{ass:init}, one obtains
       \begin{equation*}
           \epsilon \sum_{n=0}^{N-1} \E\left[ P_\tau^{X^\epsilon_{n+1}} \Phi_n(X^\epsilon_{n+1},m^\epsilon_{n+1}) - P_\tau^{X^\epsilon_n} \Phi_n(X^\epsilon_n,m^\epsilon_{n+1}) \right] \lesssim\max(\Delta t,\epsilon).
       \end{equation*}
    \end{itemize}
Gathering the estimates then concludes the proof of Lemma~\ref{lem4}.
\end{proof}

We refer to~\cite[Theorem 1.3.6]{Cerrai} for the proof of Lemma~\ref{lem:Kolmo2}. It thus remains to provide the arguments for the proofs of Lemmas~\ref{lem:Kolmo1} and~\ref{lem:Kolmo3}. The strategy is standard in the literature, see for instance~\cite{Cerrai}. As a consequence, in order to reduce the length of the manuscript, below the details are only given for the proofs of the estimates for first-order derivatives.

\begin{proof}[Proof of Lemma~\ref{lem:Kolmo1}]
Owing to~\cite[Proposition~1.3.5]{Cerrai}, for all ${\underline k} = (k_x,k_m) \in \R^d \times \R$, one has
    \begin{equation*}
        D_{x,m} u^\epsilon(t,x,m) \cdot {\underline k} = \E_{X^\epsilon(0) = x, m^\epsilon(0) = m} [D \varphi(X^\epsilon(t)) \cdot \eta^{{\epsilon,\underline k}}_x(t)],
    \end{equation*}
where the process $\eta^{{\epsilon,\underline k}} = (\eta^{{\epsilon,\underline k}}_x, \eta^{{\epsilon,\underline k}}_m)$ is the solution of the first variation equation associated with~\eqref{eq:SDE-av}: for all $t\ge 0$,
\begin{equation*}
    \left \lbrace
    \begin{aligned}
        d \eta^{{\epsilon,\underline k}}_{x,t} &= D b (X^\epsilon_t, m^\epsilon_t) \cdot \eta^{{\epsilon,\underline k}}_t dt + D \sigma (X^\epsilon_t, m^\epsilon_t) \cdot \eta^{{\epsilon,\underline k}}_t dB_t\\
        d \eta^{{\epsilon,\underline k}}_{m,t} &= -\frac{\eta^{{\epsilon,\underline k}}_{m,t}}{\epsilon} dt + \frac{\sqrt{2} D h(X^\epsilon_t) \cdot \eta^{{\epsilon,\underline k}}_{x,t}}{\sqrt{\epsilon}} d\beta_t,
    \end{aligned}
    \right.
\end{equation*}
with initial conditions $\eta^{{\epsilon,\underline k}}_x(0) = k_x$ and $\eta^{{\epsilon,\underline k}}_m(0) = k_m$.

On the one hand, the component $\eta^{{\epsilon,\underline k}}_m$ satisfies the following equality,
    \begin{equation*}
        \eta^{{\epsilon,\underline k}}_m(t) = e^{-\frac{t}{\epsilon}} k_m + \frac{1}{\sqrt{\epsilon}} \int_0^t e^{-\frac{t-s}{\epsilon}} \sqrt{2} D h(X^\epsilon(s)) \cdot \eta^{{\epsilon,\underline k}}_x(s) dB_s,
    \end{equation*}
and by means of It\^o's isometry formula, one has
    \begin{equation*}
        \E[\norm{\eta^{{\epsilon,\underline k}}_m(t)}^2] \leq k_m^2 + \frac{2 \norm{h}_{\C^1(\T^d)}}{\epsilon} \int_0^t e^{-\frac{2(t-s)}{\epsilon}} \E[\norm{\eta^{{\epsilon,\underline k}}_x(s)}^2] ds.
    \end{equation*}
On the other hand, the functions $b$ and $\sigma$ are globally Lipschitz continuous (see Assumption~\ref{ass:coeffs-av}), and one has $\norm{\eta^{{\epsilon,\underline k}}}^2 = \norm{\eta^{{\epsilon,\underline k}}_x}^2 + \norm{\eta^{{\epsilon,\underline k}}_m}^2$ ; using It\^o's isometry formula, and Minkowski's and Young's inequalities, one obtains
    \begin{equation*}
        \E[\norm{\eta^{{\epsilon,\underline k}}_x(t)}^2] \lesssim \norm{k_x}^2 + \int_0^t \left( \E[\norm{\eta^{{\epsilon,\underline k}}_x(s)}^2] + \E[\norm{\eta^{{\epsilon,\underline k}}_m(s)}^2] \right) ds.
    \end{equation*}
Combining the two estimates above then yields
    \begin{align*}
        \sup_{s \in [0,t]} \E[\norm{\eta^{{\epsilon,\underline k}}_x(s)}^2] &\lesssim \norm{{\underline k}}^2 + \int_0^t \left( \E[\norm{\eta^{{\epsilon,\underline k}}_x(s)}^2] + \frac{1}{\epsilon} \int_0^s e^{-2\frac{s-r}{\epsilon}} \E[\norm{\eta^{{\epsilon,\underline k}}_x(r)}^2] dr \right) ds\\
        &\lesssim \norm{{\underline k}}^2 + \int_0^t \sup_{r \in [0,s]} \E[\norm{\eta^{{\epsilon,\underline k}}_x(r)}^2] ds,
    \end{align*}
    where the inequality $\frac{1}{\epsilon} \int_0^s e^{-2\frac{s-r}{\epsilon}} dr \leq \frac 1 2$ has been used. Applying Gronwall's lemma, then inserting the result in the estimate above, one obtains the upper bounds
    \begin{equation*}
        \sup_{t \in [0,T]} \E[\norm{\eta^{{\epsilon,\underline k}}_x(t)}^2] \lesssim \norm{{\underline k}}^2~,\quad \sup_{t \in [0,T]} \E[\norm{\eta^{{\epsilon,\underline k}}_m(t)}^2] \lesssim \norm{{\underline k}}^2.
    \end{equation*}
Since $\varphi$ is Lipschitz continuous, using the expression for $D_{x,m}u^\epsilon(t,x,m)$ stated above, one finally obtains~\eqref{eq:Kolmo1} for the first-order derivative: for all ${\underline k}\in\R^{d+1}$, one has
\[
\sup_{(t,x,m) \in [0,T] \times \T^d \times \R} \big|D_{x,m} u^\epsilon(t,x,m) \cdot {\underline k}\big|\lesssim \norm{{\underline k}}.
\]
The treatment of higher-order derivatives follows from similar arguments which are omitted. This concludes the proof of Lemma~\ref{lem:Kolmo1}.
\end{proof}

%

\begin{proof}[Proof of Lemma~\ref{lem:Kolmo3}]
For all $k \in \R^d$, one has
\[
D u_n (x) \cdot k = \E_x[D \varphi(X_n) \cdot \eta_n^k],
\]
where $\eta_0^h=h$, and for all $n\in\{0,\ldots,N-1\}$, one has
    \begin{align*}
        \eta_{n+1}^k &= \eta_n^k + \Delta t D_x b(X_n,h(X_n)\gamma_n) \cdot \eta_n^k\\
        &\phantom{=} + \Delta t \partial_m b(X_n,h(X_n)\gamma_n) D h(X_n) \cdot \eta_n^k \gamma_n\\
        &\phantom{=} + \sqrt{\Delta t} D_x \sigma (X_n,h(X_n)\gamma_n) \cdot \eta_n^k \Gamma_n\\
        &\phantom{=} + \sqrt{\Delta t} \partial_m \sigma (X_n,h(X_n)\gamma_n) D h(X_n) \cdot \eta_n^k \Gamma_n \gamma_n.
    \end{align*}
The functions $b$, $h$ and $\sigma$ are Lipschitz continuous (see Assumption~\ref{ass:coeffs-av}). Since $\gamma_n$ and $\Gamma$ are independent centered Gaussian random variables, it is straightforward to obtain the upper bound
    \begin{equation*}
        \E[\norm{\eta_{n+1}^{k}}^2] \lesssim (1 + \Delta t)\E[\norm{\eta_n^k}^2] .
    \end{equation*}
A straightforward recursion argument then gives, for all $n\in\{0,\ldots,N\}$,
\begin{equation*}
\E[\norm{\eta_n^k}^2]\lesssim \|k\|^2
\end{equation*}
and one obtains~\eqref{eq:Kolmo3-1} for the first-order derivative: for all $n\in\{0,\ldots,N\}$,
\begin{equation*}
|D u_n (x) \cdot k|\lesssim \|k\|.
\end{equation*}
The treatment of higher-order derivatives would be similar and is omitted. It thus remains to prove~\eqref{eq:Kolmo3-2}. On the one hand, by definition~\eqref{eq:scheme-limit-av}, a second order Taylor expansion yields, for all $x\in\T^d$ and $n\in\{0,\ldots,N-1\}$,
    \begin{equation*}
        \abs{u_{n+1}(x)-u_n(x)} = \abs{\E_x[\varphi(X_{n+1}) - \varphi(X_n)]} \lesssim \Delta t,
    \end{equation*}
On the other hand, one has
    \begin{align*}
        D u_{n+1}(x) \cdot k - D u_n (x) \cdot k &= \E_x[ D \varphi(X_{n+1}) \cdot \eta_{n+1}^k - D \varphi(X_n) \cdot \eta_n^k]\\
        &= \E[ \left( D \varphi(X_{n+1}) - D \varphi(X_n) \right) \cdot (\eta_{n+1}^k - \eta_n^k)]\\
        &\phantom{=} + \E[ D \varphi(X_n) \cdot (\eta_{n+1}^k - \eta_n^k)]\\
        &\phantom{=} + \E[ \left( D \varphi(X_{n+1}) - D \varphi(X_n) \right) \cdot \eta_n^k].
    \end{align*}
It is straightforward to check that one has the inequalities $\E[\|X_{n+1}-X_n\|^2]\lesssim \Delta t$ and $\E[\|\eta_{n+1}^k-\eta_n^k\|^2]\lesssim \Delta t\|k\|^2$. Since $\varphi$ is Lipschitz continuous, using Cauchy-Schwarz inequality gives
    \begin{equation*}
        \abs{\E[ \left( D \varphi(X_{n+1}(x)) - D \varphi(X_n(x)) \right) \cdot (D X_{n+1}(x) \cdot k - D X_n(x) \cdot k)]} \lesssim  \Delta t\|k\|.
    \end{equation*}
Using a conditional expectation argument, since $\gamma_n,\Gamma_n,X_n$ are independent random variables, one has
\begin{equation*}
\abs{\E[ D \varphi(X_n) \cdot (\eta_{n+1}^k - \eta_n^k)]}=\Delta t\abs{\E[ D \varphi(X_n) \cdot  D_x b(X_n,h(X_n)\gamma_n) \cdot \eta_n^k]}\lesssim \Delta t\|k\|.
\end{equation*}
Finally, using a second-order Taylor expansion and conditioning arguments, one obtains
\begin{equation*}
\abs{\E[ \left( D \varphi(X_{n+1}) - D \varphi(X_n) \right) \cdot \eta_n^k]}\lesssim \Delta t\|k\|.
\end{equation*}
As a consequence, one obtains~\eqref{eq:Kolmo3-2} when $j=0$ and $j=1$.

This concludes the proof of Lemma~\ref{lem:Kolmo3}.
\end{proof}

\section{Conclusion}\label{sec:conclusion}

In this article, we have studied a general notion of Asymptotic Preserving schemes, related to convergence in distribution, for a class of SDE systems in averaging and diffusion approximation regimes. Let us mention that some assumptions made to simplify the setting (the slow component takes values in a compact set $\T^d$ and the fast component is one-dimensional) may easily be relaxed. Note that when the slow component takes values in $\R^d$, it is necessary to also study the stability of the numerical schemes, for instance in mean-square sense.

A limitation of our study is the fact that the fast component is an Ornstein-Uhlenbeck process (when the slow component is frozen): even if the general theory of AP schemes described in Section~\ref{sec:num-gen} holds in more general settings, the construction of implementable AP schemes (such as the ones described in Sections~\ref{sec:num-av} and~\ref{sec:num-diff}) is not straightforward if for instance the fast component is solution of a general ergodic SDE with nonlinear coefficients.

We have also left open the question of obtaining a version of the error estimates stated in Theorem~\ref{th:UA-av} in the diffusion approximation case. This question will be studied in future works.

Finally, it would be natural to apply the recipes for the design of AP schemes described in this article to SPDE models. For instance, in a future work~\cite{BrehierHivertRakotonirina}, we plan to design, analyze and test AP schemes for the stochastic kinetic PDE model considered in~\cite{Rakotonirina}.

\appendix
\section{Derivation of the limiting models}

\subsection{Sketch of proof of Proposition~\ref{propo:limit-SDE-av} (averaging)}\label{sec:app-av}

Let us first give details concerning the construction of the perturbed test function $\varphi^\epsilon$ given by~\eqref{eq:def_phi-epsilon-av}, such that~\eqref{eq:generator_convergence-av} holds. Recall that this construction is used in the statement of Proposition~\ref{propo:AP}.

Owing to the multiscale expansions~\eqref{eq:L_01-av} and~\eqref{eq:def_phi-epsilon-av} of the generator $\L^\epsilon$ and of the perturbed test function $\varphi^\epsilon = \varphi + \epsilon \varphi_1$, one has
\begin{equation} \label{eq:perturbed-test-fn_eqn_expanded-av}
    \L^\epsilon \varphi^\epsilon = \epsilon^{-1} \L_{OU} \varphi +  \left( \L_0 \varphi + \L_{OU} \varphi_1 \right) + \epsilon \L_0 \varphi_1.
\end{equation}

Since the test function $\varphi$ does not depend on $m$, one has $\L_{OU} \varphi = 0$, thus the term of order $\epsilon^{-1}$ in~\eqref{eq:perturbed-test-fn_eqn_expanded-av} vanishes.

Define, for all $x \in \T^d$ and $m \in \R$,
\begin{align*}
    \L \varphi(x) &= \int_\R \L_0 \varphi (x,m) d\nu^x(m)\\
    &= \overline b(x) \cdot \nabla_x \varphi(x) + \overline \sigma \, \overline \sigma^* (x) : \nabla_x^2 \varphi(x)\\
    \vartheta(x,m)&= \L_0 \varphi(x,m) - \L\varphi(x)\\
    &= \left( b(x,m) - \overline b(x) \right) \cdot \nabla_x \varphi(x) + \left( \sigma \sigma^* (x,m) - \overline \sigma \, \overline \sigma^* (x) \right) : \nabla_x^2 \varphi(x),
\end{align*}
where we recall that $\nu^x = \mathcal{N}(0,h(x)^2)$ is the invariant distribution of the ergodic Ornstein-Uhlenbeck process $m^x$ associated to $\L_{OU}$ on $\R$, for any fixed $x\in\mathbb{T}^d$
\begin{equation*}
    d m^x_t = - m^x_t dt + \sqrt{2}h(x) d\beta_t.
\end{equation*}
Let $m^x(t,m)$ denote the solution at time $t$, if the initial condition is given by $m^x(0,m)$. Therefore, the centering condition $\int \vartheta(x,m)d\nu^x(m)=0$ is satisfied and the Poisson equation $-\L_{OU}\varphi_1(x,\cdot)=\vartheta(x,\cdot)$ admits a solution
\begin{equation*}
    \varphi_1(x,m) = \int_0^\infty \E[\vartheta(x,m^x(t,m))] dt.
\end{equation*}

The multiscale expansion~\eqref{eq:perturbed-test-fn_eqn_expanded-av} becomes
\begin{equation*}
    \L^\epsilon \varphi^\epsilon = \L \varphi + \epsilon \L_0 \varphi_1.
\end{equation*}

To prove~\eqref{eq:generator_convergence-av}, it only remains to get estimates on $\L_0 \varphi_1$ uniformly in $\epsilon$. Consider $V = b$ or $V = \sigma \sigma^*$ and let
\begin{equation*}
    \delta(t,x,m) \doteq \E[V(x,m^x(t,m)) - \overline V(x)].
\end{equation*}
Note that for $m$ and $m' \in \R$, one has $m^x(t,m)-m^x(t,m') = (m-m') e^{-t}$. As a consequence, we have
\begin{equation*}
    \norm{\delta(t,x,m) - \delta(t,x,m')} \lesssim \abs{m-m'} e^{-t}.
\end{equation*}
By integrating with respect to $m'$ and using the equality $\overline V(x) = \int V(x,m') d\nu^x(m')$, one obtains
\begin{equation}\label{eq:OU_exponential-mixing-app}
    \norm{\delta(t,x,m)} \lesssim (1+\abs{m}) e^{-t}.
\end{equation}
Since $V$ is of class $\C^3$ with bounded derivatives, and since the derivatives of $m^x(t,m)$ with respect to $x$ do not depend on $m$, it is straightforward to generalize~\ref{eq:OU_exponential-mixing-app} to the derivatives of $\delta$. It gives that $\varphi_1 \in \mathcal C^2 (\T^d \times \R)$ and that $\varphi_1$ and its derivatives have at most linear growth in $m$, hence $\L_0 \varphi_1$ also does. This leads to~\eqref{eq:generator_convergence-av} using~\eqref{eq:moment_m-eps-t}. This concludes the identification of the limiting generator $\L$ using the perturbed test function method. The remaining ingredients of this strategy to prove the convergence in distribution of the process $X^\epsilon$ to the solution $X$ of the limiting equation associated with the limiting generator $\L$ are standard and are thus omitted.

\subsection{Sketch of proof of Proposition~\ref{propo:limit-SDE-diff} (diffusion approximation)}\label{sec:app-diff}

Let us first give details concerning the construction of the perturbed test function $\varphi^\epsilon$ given by~\eqref{eq:def_phi-epsilon-diff}, such that~\eqref{eq:generator_convergence-diff} holds. Recall that this construction is used in the statement of Proposition~\ref{propo:AP}.

Owing to the multiscale expansions~\eqref{eq:L_012-diff} and~\eqref{eq:def_phi-epsilon-diff} of the generator $\L^\epsilon$ and of the perturbed test function $\varphi^\epsilon=\varphi+\epsilon \varphi_1+\epsilon^2\varphi_2$, one has
\begin{multline} \label{eq:perturbed-test-fn_eqn_expanded-diff}
    \L^\epsilon \varphi^\epsilon = \epsilon^{-2} \L_{OU} \varphi + \epsilon^{-1} \left( \L_1 \varphi + \L_{OU} \varphi_1 \right) + \left( \L_0 \varphi + \L_1 \varphi_1 + \L_{OU} \varphi_2 \right) \\+ \epsilon \left( \L_0 \varphi_1 + \L_1 \varphi_2 \right) + \epsilon^2 \L_0 \varphi_2.
\end{multline}

Since the test function $\varphi$ does not depend on $m$, one has $\L_{OU} \varphi = 0$, thus the term of order $\epsilon^{-2}$ in~\eqref{eq:perturbed-test-fn_eqn_expanded-diff} vanishes. Define
\begin{equation} \label{eq:def_phi-1-diff}
    \varphi_1(x,m) \doteq m \frac{\sigma(x)}{f(x)} \cdot \nabla_x \varphi(x).
\end{equation}
Then it is straightforward to check that $\L_1 \varphi + \L_{OU} \varphi_1=0$, thus the term of order $\epsilon^{-1}$ in~\eqref{eq:perturbed-test-fn_eqn_expanded-diff} vanishes.

It remains to construct the function $\varphi_2$ such that the term of order $1$ in~\eqref{eq:perturbed-test-fn_eqn_expanded-diff} is equal to $\L\varphi$. Define, for all $x\in\mathbb{T}^d$ and $m\in\R$,
\begin{gather*}
    \L \varphi(x)=\int_\R (\L_0 \varphi + \L_1 \varphi_1)(x,m) d\nu^x(m),\\
    \vartheta(x,m)=(\L_0 \varphi + \L_1 \varphi_1)(x,m)-\L\varphi(x),
\end{gather*}
where we recall that $\nu^x = \mathcal{N}(0,\frac{f(x)h(x)^2}{2})$ is the invariant distribution of the ergodic Ornstein-Uhlenbeck process associated to $\L_{OU}$ on $\R$, for any fixed $x\in\mathbb{T}^d$.

Let $x\in\mathbb{T}^d$, then the Poisson equation $-\L_{OU}\varphi_2(x,\cdot)=\vartheta(x,\cdot)$ admits a solution $\varphi_2$, since the centering condition $\int \vartheta(x,m)d\nu^x(m)=0$ is satisfied. Precisely, one has the expressions
\begin{align}
\vartheta(x,m)&=- \left( \left| m \right|^2 - \frac{fh^2}{2} \right) \sigma \cdot \nabla_x \left( \frac{\sigma}{f} \cdot \nabla_x \varphi \right),\nonumber\\
\varphi_2(\cdot,m) &\doteq \frac{\left| m \right|^2}{2} \frac{\sigma}{f} \cdot \nabla_x \left( \frac{\sigma}{f} \cdot \nabla_x \varphi \right). \label{eq:def_phi-2-diff}
\end{align}
With the functions $\varphi_1$ and $\varphi_2$ constructed above, the multiscale expansion~\eqref{eq:perturbed-test-fn_eqn_expanded-diff} is rewritten as
\[
\L^\epsilon \varphi^\epsilon = \L \varphi + \epsilon \left( \L_0 \varphi_1 + \L_1 \varphi_2 \right) + \epsilon^2 \L_0 \varphi_2,
\]
which gives~\eqref{eq:generator_convergence-diff}, more precisely
\[
\underset{x\in\mathbb{T}^d}\sup~| \L^\epsilon \varphi^\epsilon (x,m) - \L \varphi (x) | \leq C_\varphi \left( \epsilon |m| + \epsilon^2 |m|^2 \right),
\]
for some constant $C_\varphi$ depending only on $\varphi$ and on the coefficients of the SDE.

It remains to check that $\L \varphi(x)=\int_\R (\L_0 \varphi + \L_1 \varphi_1)(x,m) d\nu^x(m)$ gives the expression~\eqref{eq:limit_generator-diff}. This concludes the identification of the limiting generator $\L$ using the perturbed test function method. The remaining ingredients of this strategy to prove the convergence in distribution of the process $X^\epsilon$ to the solution $X$ of the limiting equation associated with the limiting generator $\L$ follows from standard arguments which are omitted.

\section*{Acknowledgments}
The work of C.-E.~B. is partially supported by the following projects operated by the French National Research Agency: ADA (ANR-19-CE40-0019-02 ), BORDS (ANR-16-CE40-0027-01) and SIMALIN (ANR-19-CE40-0016).


\end{document}